\DeclareMathOperator{\chr}{char}
\DeclareMathOperator{\Gal}{Gal} 
\DeclareMathOperator{\Frob}{Frob} 
\DeclareMathOperator{\Spec}{Spec}
\newtheorem{theorem}{Theorem}
\newtheorem{lemma}[theorem]{Lemma}
\newtheorem{proposition}[theorem]{Proposition}
\theoremstyle{definition}
\newtheorem{definition}[theorem]{Definition}
\newtheorem{example}[theorem]{Example}
\newtheorem{remark}[theorem]{Remark}
\newtheorem*{notation}{Notation}
\numberwithin{theorem}{section}
\numberwithin{equation}{section}
\DeclareSymbolFont{bbold}{U}{bbold}{m}{n}
\DeclareSymbolFontAlphabet{\mathbbold}{bbold}
\renewcommand{\P}{\mathbb{P}}
\newcommand{\Q}{\mathbb{Q}}
\newcommand{\F}{\mathbb{F}}
\newcommand{\N}{\mathbb{N}}
\newcommand{\R}{\mathbb{R}}
\newcommand{\Z}{\mathbb{Z}}
\renewcommand{\l}{\left}
\newcommand{\m}{\mathfrak{m}}
\renewcommand{\r}{\right}
\renewcommand{\c}{\mathcal} 
\renewcommand{\gcd}{\textrm{gcd}} 
\renewcommand{\leq}{\leqslant}
\renewcommand{\geq}{\geqslant}
\renewcommand{\#}{\sharp}
\renewcommand{\gg}{\ggg}
\renewcommand{\ll}{\lll}
\newcommand{\p}{\mathfrak{p}}
\newcommand{\Adele}{\mathbf{A}}
\newcommand{\fp}{\mathfrak{p}}
\newcommand{\x}{\mathbf{x}}
\newcommand{\y}{\mathbf{y}}
\newcommand\FF{\mathbb{F}}
\newcommand\PP{\mathbb{P}}
\newcommand\ZZ{\mathbb{Z}}
\newcommand\NN{\mathbb{N}}
\newcommand\QQ{\mathbb{Q}}
\newcommand\RR{\mathbb{R}}
\begin{document}
\begin{abstract}
 We study probability distributions arising from local obstructions to the existence of $p$-adic points in families of varieties. In certain cases we show that an Erd\H{o}s--Kac type normal distribution law holds.
\end{abstract}

\date{\today}

\title[
An Erd\H{o}s--Kac law for local solubility in families of varieties
]
{
An Erd\H{o}s--Kac law for local solubility in families of varieties
} 

\author{D. Loughran}
\address{
University of Manchester\\
School of Mathematics\\
Oxford Road\\
Manchester\\
M13 9PL\\
UK}
\email{daniel.loughran@manchester.ac.uk}

\author{E. Sofos}
\address{
Max Planck Institute for Mathematics\\
Vivatsgasse 7
\\
Bonn
\\
53111
\\
Germany}
\email{sofos@mpim-bonn.mpg.de}

\subjclass[2010]
{14G05, 
60F05; 
14D10, 
11N36. 
}

\maketitle

\tableofcontents

\section{Introduction}
\label{s:intro} 

\subsection{A central limit theorem for fibrations} \label{sec:CLT}

Let $V$ be a smooth projective variety over $\QQ$ equipped with a dominant morphism $\pi: V \to \PP^n$
with geometrically integral generic fibre. We view $\pi$ as defining a family of varieties given by the fibres of $\pi$.

A natural problem is to study the distribution of the varieties in the family with a rational point.
In the case of families of conics, this problem was studied by Serre in \cite{Ser90}.
He obtained precise upper bounds for the counting function
$$N(\pi,B) := \#\{ x \in \PP^n(\QQ) : x \in \pi(V(\QQ)), H(x) \leq B\}$$
of the number of varieties in the family with a rational point.
(Here $H$ is the usual naive height on $\PP^n(\QQ)$).
For example, his results show that for families of conics if $\pi$ admits no section 
over $\Q$
then $N(\pi,B) = o(B^{n+1})$, i.e.~$100\%$ of the fibres of $\pi$ have no rational point. 
He did this by showing that $100\%$ of the fibres fail to be $p$-adically soluble 
for all primes $p$.

This subject has been studied in various
settings by different authors; 
the reader is referred to~\cite{LS16} and \cite{Lou13}
for a history of the subject.
The main result from~\cite{LS16} generalised Serre's result from families of conics to arbitrary
families of varieties $\pi: V \to \PP^n$ for any smooth projective variety $V$. 
Again the authors considered the closely related problem of counting the number of varieties
in the family which are everywhere locally soluble. They proved
an upper bound of the shape
\begin{equation} \label{eqn:LS16}
 \#\{ x \in \PP^n(\QQ) : x \in \pi(V(\Adele_\QQ)), H(x) \leq B\} \ll \frac{B^{n+1}}{(\log B)^{\Delta(\pi)}},
\end{equation}
for an explicit non-negative
$\Delta(\pi) \in \QQ$. (Here $\Adele_\QQ$ denotes the adeles of $\QQ$.)
Moreover, they conjectured in \cite[Conj.~1.6]{LS16} that the upper bound \eqref{eqn:LS16} is sharp, under the necessary assumptions that the set being counted is non-empty and that the fibre over every codimension $1$ point of $\PP^n$ contains an irreducible component of multiplicity $1$. As it will occur frequently in our results, we recall the definition of $\Delta(\pi)$ here.
\begin{definition} \label{def:Delta}
	Let $\pi:V \to X$ be a dominant proper morphism of smooth irreducible varieties over a
	field $k$ of characteristic $0$. For each (scheme-theoretic) point
	$x \in X$ with residue field $\kappa(x)$,
	the absolute Galois group $\Gal(\overline{\kappa(x)}/ \kappa(x))$ 
	of the residue field acts on the irreducible
	components of $\pi^{-1}(x)_{\overline{\kappa(x)}}:=\pi^{-1}(x) \times_{\kappa(x)} \overline{\kappa(x)}$ of multiplicity $1$. 
	We choose
	some finite group $\Gamma_x$ through which this action factors. Then we define
	$$\delta_x(\pi) = \frac{\# \left\{ \gamma \in \Gamma_x : 
	\begin{array}{l}
		\gamma \text{ fixes an irreducible component} \\
		\text{of $\pi^{-1}(x)_{\overline{\kappa(x)}}$ of multiplicity } 1
	\end{array}
	\right \}}
	{\# \Gamma_x }
	\  \text{ and } \ 
	\Delta(\pi) = \sum_{D \in X^{(1)}} ( 1 - \delta_D(\pi)),$$
	where $X^{(1)}$ denotes the set of codimension $1$ points of $X$.
\end{definition}
These invariants  are defined by group theoretic data which can often be  calculated in practice. In this paper we consider the following problem which is closely related to Serre's:

\medskip
Given a family of varieties $\pi: V \to \PP^n$
and $j \in \Z_{\geq 0}$,
what is the distribution of varieties in the family which fail to have a $p$-adic point for exactly $j$ 
primes $p$? 
\medskip

To study this problem, for $x \in \PP^n(\QQ)$ we consider the function
\begin{equation} \label{def:omega}
	\omega_\pi(x):=\#\big\{\text{primes } p:\pi^{-1}(x)(\Q_p)=\emptyset\big\}.
\end{equation}
Note that $\omega_\pi(x)$ need not be finite in general; however it is finite if $\pi^{-1}(x)$ is geometrically integral, as follows from  the Lang--Weil estimates \cite{LW} and Hensel's lemma. If the generic fibre of $\pi$ is geometrically integral, then $\omega_\pi(x)$ is finite for all $x$ outside of some proper Zariski closed set. (In practice, we restrict  to the smooth fibres of $\pi$.) One can also consider variants of the function $\omega_\pi$ from \eqref{def:omega}, by considering
real solubility or by dropping conditions at finitely many primes; we discuss this possibility in \S \ref{sec:generalisations}. 

As is clear from \eqref{eqn:LS16}, if $\Delta(\pi) > 0$ then the function $\omega_\pi(x)$
is almost always positive. Our first result gives more specific information about the distribution of $\omega_\pi(x)$, and is an analogue of the Erd\H{o}s--Kac theorem \cite{MR0002374} in our setting. Recall that this states that  the function
\begin{equation} \label{def:omega_classic}
	\omega(m):=\#\{ \text{primes } p: p \mid m\}
\end{equation}
behaves likes a normal distribution with mean and variance $\log \log n$; more formally,
for every interval $\c{J}\subset \R$
one has 
\[
\lim_{x\to\infty}
\frac{1}{x}
\#\Big\{1\leq m \leq x:
\frac{\omega(m)-\log \log m}{\sqrt{\log \log  m}}
\in \mathcal{J}
\Big\}
=\frac{1}{\sqrt{2\pi}}
\int_{\c{J}}
\mathrm{e}^{-\frac{t^2}{\!2}}
\mathrm{d}t
.\]
This theorem is one of the foundational results in probabilistic number theory.

For our analogue, we need some notation.
For each $B\in \R_{\geq 1}$ 
and $A\subseteq \P^n(\Q)$ we define 
\begin{equation}\label{eq:puppenkonigEPI}
\nu_B(A):=
\frac{\#\big\{
x\in \P^n(\Q):H(x)\leq B, x \in A
\big\}}{\#\{x\in \P^n(\Q):H(x)\leq B\}}
.\end{equation}
If 
$\lim_{B\to\infty}\nu_B(A)$ exists then its value is to be conceived as
the ``density'' of $A$.
 Our result is  the following. (Here, and in what follows, we also commit the minor abuse of implicitly excluding the finitely many
rational points $x$ with $\log H(x) \leq 1$.)

\begin{theorem}
\label{thm:gaussian} 
Let $V$ be a smooth  projective variety over $\QQ$ equipped with a dominant morphism $\pi: V \to \PP^n$
with geometrically integral generic fibre and $\Delta(\pi)\neq 0$. 
Let $H$ be the usual naive height on $\P^n$. Then for any interval $\mathcal{J}\subset \R$ we have 
\[
\lim_{B\to\infty}
\nu_B
\left(
\left\{x \in \PP^n(\Q):
\frac{\omega_\pi(x)-\Delta(\pi)\log \log H(x)}{\sqrt{\Delta(\pi)\log \log  H(x)}}
\in \mathcal{J}
\right\}\right)
=\frac{1}{\sqrt{2\pi}}
\int_{\c{J}}
\mathrm{e}^{-\frac{t^2}{\!2}}
\mathrm{d}t
.\]
\end{theorem}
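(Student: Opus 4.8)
The plan is to prove the theorem by the method of moments, following Halberstam's treatment of the classical Erd\H{o}s--Kac theorem. For a prime $p$ put $\xi_p(x)=1$ if $\pi^{-1}(x)(\Q_p)=\emptyset$ and $\xi_p(x)=0$ otherwise, so that $\omega_\pi(x)=\sum_p\xi_p(x)$ for all $x$ outside the density-zero locus where $\pi$ is not smooth, which we ignore; and for each $p$ let $\rho_p\in[0,1]$ be the measure of $\{y\in\P^n(\Q_p):\pi^{-1}(y)(\Q_p)=\emptyset\}$ for the natural $p$-adic measure on $\P^n(\Q_p)$. Since the standard Gaussian is determined by its moments, by the Fr\'echet--Shohat theorem it suffices to prove that for every fixed $k\in\Z_{\geq0}$ the $\nu_B$-average of $\big((\omega_\pi(x)-\Delta(\pi)\log\log H(x))/\sqrt{\Delta(\pi)\log\log H(x)}\big)^k$ tends, as $B\to\infty$, to the $k$-th moment of the standard normal law (the hypothesis $\Delta(\pi)\neq0$ ensures $\Delta(\pi)\log\log H(x)\to\infty$, so this normalisation makes sense). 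A useful first step is geometric: writing $x=[a_0:\cdots:a_n]$ with coprime $a_i\in\Z$, $|a_i|\leq B$, the fibre $\pi^{-1}(x)$ has good reduction away from an integer $N(x)$ that is a fixed polynomial in the $a_i$, so $N(x)\ll_\pi B^{O_\pi(1)}$; the Lang--Weil estimates \cite{LW} together with Hensel's lemma then give $\pi^{-1}(x)(\Q_p)\neq\emptyset$ whenever $p>C(\pi)$ and $p\nmid N(x)$, where $C(\pi)$ depends only on the generic fibre. Hence for $H(x)\leq B$ one has $\omega_\pi(x)=\#\{p\leq B^{O_\pi(1)}:\pi^{-1}(x)(\Q_p)=\emptyset\}$, and the tail $\sum_{p>z}\xi_p(x)$ is at most the number of prime divisors of $N(x)$ exceeding $z$, plus $O_\pi(1)$.

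The heart of the matter lies in two arithmetic inputs. The first is a Mertens-type estimate, which is where the invariant $\Delta(\pi)$ of Definition~\ref{def:Delta} enters: refining the analysis behind \eqref{eqn:LS16}, I would show
\[
\sum_{p\leq z}\rho_p=\Delta(\pi)\log\log z+O_\pi(1)\qquad(z\to\infty).
\]
To prove it one stratifies $\P^n$ over $\F_p$ by the combinatorial type of the fibre of $\pi$ and applies the Lang--Weil estimates fibrewise, obtaining $\rho_p=\frac1p\sum_D(1-\delta_{D,p})+O_\pi(p^{-3/2})$, where $D$ runs over the codimension-$1$ points and $\delta_{D,p}$ is a Frobenius-fixed-component count over the degree-one places above $p$ of the residue field $\kappa(D)$; averaging over $p\leq z$ via the Chebotarev density theorem replaces each $\delta_{D,p}$ by $\delta_D(\pi)$ and produces $\Delta(\pi)=\sum_D(1-\delta_D(\pi))$.

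The second input is quasi-independence of the $\xi_p$: for any fixed $r$, any distinct primes $p_1<\cdots<p_r\leq z$ with $z\leq B^{\epsilon(r)}$ and $\epsilon(r)>0$ small enough,
\[
\frac{1}{\#\{x\in\P^n(\Q):H(x)\leq B\}}\sum_{\substack{x\in\P^n(\Q)\\ H(x)\leq B}}\prod_{i=1}^r\xi_{p_i}(x)=\prod_{i=1}^r\rho_{p_i}+O_{\pi,r}(B^{-\delta})
\]
for some $\delta=\delta(\pi,r)>0$. One first shows that, outside a subset of $\P^n(\Q_p)$ of measure $O_\pi(p^{-2})$, the condition $\xi_p=1$ is detected by the reduction of $x$ modulo a bounded power of $p$; the delicate point is to control the $x$ whose fibre has bad reduction at $p$, achieved by a geometric sieve combined with the measure bound on $\rho_p$ above. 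One then evaluates the sum by the Chinese remainder theorem and equidistribution of $\{x\in\P^n(\Q):H(x)\leq B\}$ modulo $(p_1\cdots p_r)^{O_\pi(1)}$ with a power-saving error, a standard lattice-point estimate since the modulus is at most $B^{1-\delta}$.

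Granting these inputs, the moment computation is routine and follows the classical argument. Fixing $k$ and truncating at $z=B^{\epsilon(k)}$: the tail $\sum_{p>z}\xi_p(x)$ is dominated by the number of prime factors of $N(x)$ exceeding $z$, whose $\nu_B$-moments of every fixed order are $O_{\pi,k}(1)$ by standard sieve estimates, and the Mertens-type estimate lets one replace $\Delta(\pi)\log\log H(x)$ by $\sum_{p\leq z}\rho_p$ up to $O_\pi(1)$ (using also that $\log\log H(x)=\log\log B+O(1)$ for all but $o(B^{n+1})$ of the relevant $x$); together these show the normalised $k$-th moment differs by $o_B(1)$ from that of $\big(\sum_{p\leq z}(\xi_p(x)-\rho_p)\big)/\sqrt{\sum_{p\leq z}\rho_p}$. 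Expanding the $k$-th power and invoking quasi-independence, any monomial containing a prime to the first power contributes $o(1)$, a repeated prime $p$ contributes its diagonal moment $\rho_p-\rho_p^2\sim\rho_p$, and only the $(k-1)!!$ perfect pairings survive, yielding $(k-1)!!$ for even $k$ and $0$ for odd $k$ after dividing by $\big(\sum_{p\leq z}\rho_p\big)^{k/2}\sim(\Delta(\pi)\log\log B)^{k/2}$. The step I expect to be the main obstacle is the second input, and within it the uniform control of the bad-reduction loci needed to reduce $\xi_p$ to a congruence condition modulo a bounded power of $p$; the Mertens-type estimate for $\sum_{p\leq z}\rho_p$ is the other substantive ingredient, whereas deducing the stated limit from convergence of moments is purely formal.
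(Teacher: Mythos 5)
Your proposal follows essentially the same architecture as the paper: method of moments, truncation of $\omega_\pi$, a Mertens-type estimate for the local densities, quasi-independence via equidistribution of rational points in residue classes, tail control via the polynomial $N(x)$ cutting out the bad locus, and Fr\'echet--Shohat to pass from moments to the limit law. The one genuine structural difference is in the moment computation itself: you propose to carry it out by hand \`a la Halberstam (isolate the perfect pairings, bound the cross terms), whereas the paper invokes the general result of Granville and Soundararajan (their Proposition~3), which packages exactly that combinatorics with the error terms already optimized. Your route is more elementary and self-contained; the paper's saves several pages and delivers a cleaner uniform error term, which the paper in turn needs to obtain the quantitative rate $O_r(\log\log\log\log B/(\log\log B)^{1/2})$ in its Theorem~1.3. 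Two small points worth flagging. First, the quasi-independence estimate cannot literally be $\prod_i\rho_{p_i}+O(B^{-\delta})$: because $\xi_p$ is only sandwiched between congruence conditions mod $p$ and mod $p^2$ (outside a measure-$O(p^{-2})$ set), the paper's Proposition~3.7 necessarily produces two-sided bounds with main term $\prod_i(\sigma_{p_i}\pm\alpha/p_i^2)$, and one must impose a lower truncation $p>t_0(B)\to\infty$ to make the $\alpha/p^2$ discrepancy negligible; your sketch does not mention this lower cut-off, though it is a minor addition. Second, the paper works with $\sigma_p$ (the proportion of non-split fibres over $\F_p$) rather than with the $p$-adic measure $\rho_p$; the two differ by $O(p^{-2})$ so this is a matter of bookkeeping, but the non-split count is what [LS16, Thm.~2.8] and the Chebotarev argument naturally produce.
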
 
Note that the probability distribution obtained only depends on the invariant $\Delta(\pi)$ from Definition \ref{def:Delta}; the geometric properties of the smooth members of the family are irrelevant.
A measure-theoretic interpretation of Theorem \ref{thm:gaussian} is as follows:
It says that 
$$\mathcal{J} \mapsto \lim_{B\to\infty}
\nu_B
\Bigg(
x;
\frac{\omega_\pi(x)-\Delta(\pi)\log \log H(x)}{\sqrt{\Delta(\pi)\log \log  H(x)}}\in \mathcal{J}\Bigg)
$$
defines a probability measure on $\RR$ which equals the standard Gaussian measure.
Informally, it says $\omega_\pi(x)$ is normally distributed with mean and variance
$\Delta(\pi) \log \log H(x)$.

Theorem~\ref{thm:gaussian} is proved by studying the moments
\begin{equation}
\label{def:momer}
\c{M}_r(\pi, B):=
\sum_{\substack{ x \in \PP^n(\QQ), H(x)\leq B\\ \pi^{-1}(x) \text{ smooth}}}
\l(
\frac{
\omega_\pi(x)
-
\Delta(\pi)\log \log B
}
{\sqrt{\Delta(\pi)\log \log B}}
\r)^{r}\!\!\!, \quad
(r \in \Z_{\geq 0}).
\end{equation} 

\begin{theorem}
\label{thm:moment}
Keep the assumptions of Theorem \ref{thm:gaussian}.
Then for each $r \in \Z_{\geq 0}$
we have
\[\frac{ \c{M}_r(\pi,B) } {\#\{x\in \P^n(\Q):H(x)\leq B\} } =\mu_r+O_{r}\Big(\frac{\log \log \log \log B}{(\log \log B)^{1/2}}\Big)
, \quad \text{where }
\mu_r:= \begin{cases} \frac{r!}{2^{r/2}(r/2)!}, & r \text{ even, }\\  0, & r \text{ odd.} \end{cases} \]
\end{theorem}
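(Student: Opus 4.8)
The plan is to prove Theorem~\ref{thm:moment} by the method of moments, showing that the arithmetic moments $\c{M}_r(\pi,B)$ converge to the Gaussian moments $\mu_r$; Theorem~\ref{thm:gaussian} then follows from the classical Fr\'echet--Shohat moment convergence theorem, since the Gaussian distribution is determined by its moments. The core of the argument is to understand $\omega_\pi(x)$ as a sum over primes of local indicator functions. Concretely, for a smooth fibre write $\omega_\pi(x) = \sum_p \mathbf{1}[\pi^{-1}(x)(\Q_p) = \emptyset]$. For all but finitely many primes $p$ the fibre $\pi^{-1}(x)$ has good reduction and, by the Lang--Weil estimates together with Hensel's lemma, solubility at $p$ is governed by whether the reduction mod $p$ acquires a smooth $\F_p$-point, which in turn (via Definition~\ref{def:Delta} applied at the codimension-$1$ points of $\PP^n$ where the fibre degenerates) happens with a frequency encoded by the local densities $\delta_D(\pi)$. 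The upshot, which I would establish first as a key lemma, is that there is a finite set of ``bad'' primes and a Chebotarev-type equidistribution statement: as $x$ ranges over $\PP^n(\Q)$ of bounded height, the events $\{p : \pi^{-1}(x)(\Q_p)=\emptyset\}$ behave like independent events of probability $\tfrac{\rho_\pi(p)}{p} + O(p^{-3/2})$ for suitable constants summing (after logarithmic averaging) to $\Delta(\pi)\log\log B$.

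The second and main step is the actual moment computation. Expanding the $r$-th power, $\c{M}_r(\pi,B)$ becomes, up to the normalising factor $(\Delta(\pi)\log\log B)^{r/2}$ and the centering, a sum over $r$-tuples of primes $(p_1,\dots,p_r)$ of counts of $x \in \PP^n(\Q)$, $H(x)\leq B$, for which $\pi^{-1}(x)$ is insoluble at each $p_i$. The essential input is an asymptotic formula for
\[
\#\{ x \in \PP^n(\Q) : H(x) \leq B,\ \pi^{-1}(x)(\Q_{p_i}) = \emptyset \text{ for } 1 \leq i \leq k \}
\]
for squarefree $d = p_1 \cdots p_k$ with $d$ not too large in terms of $B$, of the shape $c\, B^{n+1} \prod_i (\tfrac{\rho_\pi(p_i)}{p_i} + O(p_i^{-3/2})) + (\text{error})$. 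This is a lattice-point count in $\PP^n$ subject to congruence conditions modulo $d$ at each prime, and one obtains it by Möbius inversion / inclusion--exclusion over the relevant residue classes modulo $p_i^2$ (the exponent $2$ coming from Hensel lifting past the bad-reduction locus), with the main term factoring as an Euler product; the local factor at $p$ is essentially $1 - \delta_D(\pi)$ summed over the degeneration divisors, matching $\Delta(\pi)$. Substituting back, the sum over $k$-tuples of \emph{distinct} primes of $\prod_i \rho_\pi(p_i)/p_i$ is $\sim (\Delta(\pi)\log\log B)^k/k!$ by Mertens-type estimates, and combining the multinomial coefficients arising from the expansion of a centered sum of Bernoulli-like variables reproduces exactly the combinatorics that yields $\mu_r$: the odd moments cancel and the even moments give $r!/(2^{r/2}(r/2)!)$. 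Tracking the error terms carefully gives the stated $O_r(\log\log\log\log B / (\log\log B)^{1/2})$ rate, where the quadruple-log arises from truncating the prime range and handling the tuples with repeated primes.

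The main obstacle, and where the bulk of the technical work lies, is controlling the error term in the congruence-count asymptotic \emph{uniformly} as the modulus $d = p_1\cdots p_k$ grows with $B$ — one needs the formula to hold in a range like $d \leq B^{\varepsilon}$ (or at least $d$ up to a small power of $\log B$ suffices, but uniformity is still delicate because $r$ is fixed yet the number of primes contributing is $\asymp \log\log B$). This requires a lattice-point count in $\PP^n$ with congruence conditions modulo $d^2$ having an error term that beats the trivial $O(d^2)$-type loss; the saving comes from the $n \geq 1$ geometry (there is genuine room to average) and from the fact that insolubility is detected by a \emph{positive-codimension} condition on the reduction, so the ``bad'' congruence classes modulo $p^2$ number only $O(p)$ rather than $O(p^2)$. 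A secondary difficulty is handling the finitely many bad primes and the real place — but these contribute only $O(1)$ to $\omega_\pi(x)$ and hence are absorbed into the error after normalisation — and ensuring that the exceptional Zariski-closed locus where $\omega_\pi(x)$ is infinite or the fibre is singular is negligible, which follows from a standard dimension-counting bound $O(B^n)$ on rational points in a proper closed subset, again dwarfed by $B^{n+1}$.
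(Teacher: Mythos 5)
Your plan follows the same overall strategy as the paper (method of moments, translation of $p$-adic insolubility into congruence conditions, projective lattice-point counts with a modulus growing as a small power of $B$, Mertens-type averaging, and deduction of Theorem~\ref{thm:gaussian} via moment convergence). Two points of comparison are worth flagging. First, rather than expanding the $r$-th power and carrying out the multinomial bookkeeping by hand, the paper packages precisely that combinatorics into a single invocation of Granville--Soundararajan's general moment theorem (\cite[Prop.~3]{MR2290492}, quoted as Lemma~\ref{lem:generalerdoskac}), applied to a truncated sum $\omega_\pi^\flat(x,B)$ over $p \in (t_0(B), t_1(B)]$. Your direct expansion is in principle workable and is what underlies the Granville--Soundararajan result; invoking it is simply cleaner and makes the uniformity in $r$ and the error tracking less painful.

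Second, and more substantively, there is a gap in the way you pass from $\pi^{-1}(x)(\Q_p) = \emptyset$ to a congruence condition. Lang--Weil plus Hensel only gives one implication: if $\pi^{-1}(x \bmod p)$ is \emph{split}, then $\pi^{-1}(x)(\Q_p) \neq \emptyset$, so insolubility forces non-split reduction. That yields an \emph{upper} bound on the congruence count, but no lower bound; in general there is no clean exact asymptotic of the form $c\,B^{n+1}\prod_i(\rho_\pi(p_i)/p_i + O(p_i^{-3/2}))$. What rescues the argument in the paper is the valuative criterion of \cite[Thm.~2.8]{LS16}: for large $p$, if $f(x) \equiv 0 \bmod p$ exactly (to first order), $g(x) \not\equiv 0 \bmod p$ for a certain auxiliary polynomial $g$ vanishing on the singular locus of $f$, and $\pi^{-1}(x \bmod p)$ is non-split, then $\pi^{-1}(x)(\Q_p) = \emptyset$. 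That is a genuinely geometric input, not a formal consequence of good reduction, and it is what produces the matching \emph{lower} bound (Proposition~\ref{prop:arcturus}); only then do the upper and lower bounds agree to the $O(p^{-2})$ precision needed. Your proposal treats this as if it followed from Lang--Weil and Hensel alone, which it does not, so as written the crucial lower-bound half of the equidistribution step is unjustified.
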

Here $\mu_r$ is the $r$-th moment of the standard normal distribution. 
 Our main tool in the proof of Theorem \ref{thm:moment}
is the result of Granville
and Soundararajan~\cite{MR2290492}.
Theorem \ref{thm:gaussian} is proved from Theorem \ref{thm:moment}  via a standard argument, which rests on the fact that the normal distribution is determined by its moments

There are general conditions under which 
one can prove an Erd\H{o}s--Kac law for certain additive functions defined on  $\Z^{n+1}$, 
see~\cite[\S 12]{MR560507} for example.
In principle,
these results
could be extended to 
cover additive arithmetic
functions restricted to values
of a general
polynomial, i.e.~$
\sum_{p|f(x)}h(p)
$
for integer polynomials $f$ and functions $h$ of certain 
growth over the primes;
see the work of Xiong~\cite{MR2497471}.
However, $\omega_\pi$ does not admit any
such interpretation, as the following example shows.

\begin{example}
Consider the following family of conics
$$ax^2 + by^2 = cz^2 \qquad \subset \PP^2 \times \PP^2,$$
equipped with the projection $\pi$ to $(a:b:c)$.
Take $(a,b,c) \in \ZZ^3$ pairwise coprime, square-free and all congruent to $1 \bmod 4$.
A Hilbert symbol calculation shows that
\begin{equation}
\label{eq:con}
\omega_\pi(a:b:c)=
\Bigg(\frac{1}{2}\sum_{p\mid a}
\Big(1-
\Big(\frac{bc}{p}\Big)
\Big) \Bigg)
+
\Bigg(\frac{1}{2}\sum_{p\mid b}
\Big(1-
\Big(\frac{ac}{p}\Big)
\Big) \Bigg)
+
\Bigg(\frac{1}{2}\sum_{p\mid c}
\Big(1-
\Big(\frac{-ab}{p}\Big)
\Big) \Bigg) 
,\end{equation}
where $(\frac{\cdot}{p})$ is the Legendre symbol (cf.~\cite[p.~13]{MR1199934}).
One cannot 
directly
apply the aforementioned general results here,
since the function in~\eqref{eq:con} is not the restriction of an additive function to the values of a polynomial. 
Nevertheless Theorem \ref{thm:gaussian} implies that the function $\omega_\pi$ has normal order $\frac{3}{2}\log \log H(a:b:c)$ in this case.
\end{example}

We also give an application of our results to a family of curves of genus $1$.

\begin{example}
	Let $c,d \in \ZZ$ be such that $cd(c-d) \neq 0$ and let $f(t) \in \ZZ[t]$ be a 
	square-free polynomial of even degree.
	Consider the variety
	$$W: \quad x^2 - cw^2 = f(t)y^2, \quad x^2 - dw^2 = f(t)z^2 \qquad \subset \PP^2 \times \mathbb{A}^{1}.$$
	Let $\pi: V \to \PP^1$ be a non-singular 
	compactification of the natural projection $W \to \PP^1$ to the $t$-coordinate.
	The generic fibre of $\pi$ is a smooth intersection of two quadrics in $\PP^3$,
	hence is a genus $1$ curve.
	The singular fibres occur over the closed points corresponding to the
	irreducible polynomials dividing $f$. Moreover,
	by \cite[Prop.~4.1]{CTSSD97}, the fibre over every such closed point $P$ is a double 
	fibre, hence $\delta_P(\pi) = 0$. Theorem \ref{thm:gaussian} therefore
	implies that $\omega_\pi$ has normal order $r(f)\log \log H(1:t)$
	in this case, where $r(f)$ is the number of irreducible polynomials dividing $f$.
\end{example}
This last example is particularly interesting, as the upper bound \eqref{eqn:LS16} is conjecturally sharp
only if the fibre over every codimension $1$ point contains an irreducible component of multiplicitly $1$. No such assumptions are required in the statements of our theorems.

The next example illustrates how to (essentially) recover the usual $\omega$ \eqref{def:omega_classic}
as a special case of our $\omega_\pi$.
\begin{example}
	Let $V$ be a smooth projective variety over $\QQ$ equipped with a dominant morphism
	$\pi:V \to \PP^1$ such that:
	\begin{enumerate}
		\item The fibre over $(0:1)$ has multiplicity $m > 1$,
		i.e.~we have $\pi^*((0:1)) = mD$
		for some divisor $D$ on $V$.
		\item All other fibres are geometrically integral.
	\end{enumerate}
	Examples of such varieties are ``unnodal Halphen surfaces of index $m$'' \cite[\S2]{CD12}.

	Let now $(x_0,x_1)$ be a primitive integer vector and $P=(x_0:x_1) \in \PP^1(\QQ)$.
	Then our methods will yield the existence of some $A > 0$
	such that for all primes $p > A$ we have
	$$
		v_p(x_0) = 0 \implies \pi^{-1}(P)(\QQ_p) \neq \emptyset, \quad
		v_p(x_0) = 1 \implies \pi^{-1}(P)(\QQ_p) =\emptyset,
	$$
	where $v_p$ denotes the $p$-adic valuation.
	Thus if $x_0$ is square-free and $p \nmid x_0$ for all $p \leq A$, then
	$\omega_{\pi}(P) = \omega(x_0).$
	(We shall see that small primes and primes of 
	higher multiplicity do not effect the overall probabilistic behaviour,
	so our results essentially recover
	the usual Erd\H{o}s--Kac theorem.)
\end{example}

\subsection{The pseudo-split case} \label{sec:ps_intro}
Our results from \S\ref{sec:CLT} only apply when $\Delta(\pi) \neq 0$. 
It turns out that
a normal distribution does not hold when $\Delta(\pi) = 0$. 
We refer to the case $\Delta(\pi) = 0$ as the ``pseudo-split case''. This is because
the condition $\Delta(\pi) = 0$ is equivalent to the condition that the fibre over every
codimension $1$ point of $\PP^n$ is \emph{pseudo-split}, in the sense of \cite[Def.~1.3]{LSS17}.
The pseudo-split case is interesting from an arithmetic perspective,
as these are exactly the families of
varieties for which
a positive proportion of the fibres can be
everywhere locally soluble (see \cite[Thm.~1.3]{LS16}).

In the pseudo-split case there is a discrete probability distribution,
in a sense that is made precise in the following theorem. For
$j\in \Z_{\geq 0}$
and $B\geq 1$ we define
\begin{equation}
\label{def:fpijb}
\tau_\pi(j,B)
:=
\frac
{\#\{
x\in \P^n(\Q):
H(x)\leq B,
\pi^{-1}(x) \text{ smooth},
\omega_\pi(x)=j
\}}
{\#\{
x\in \P^n(\Q):
H(x)\leq B
\}}
.\end{equation}

\begin{theorem} \label{thm:Delta=0}
Let $V$ be a smooth projective variety over $\QQ$ equipped with a dominant morphism $\pi: V \to \PP^n$
with geometrically integral generic fibre and $\Delta(\pi) = 0$. Then
\begin{equation} \label{def:jjlimit}
	\tau_\pi: \ZZ \to \RR, \quad j \mapsto \tau_\pi(j) := \lim_{B\to \infty} \tau_\pi(j,B)
\end{equation}
is well-defined and 
defines a probability measure on $\ZZ$. Moreover, for every $j\in \Z_{\geq 0}$ we have the following upper bound
\begin{equation}
\label{eq:wowcan}
\tau_\pi(j)
\ll_\pi \frac{1}{(1+j)^{j}
(\log (2+j))^{j/2}}
,\end{equation}
where the implied constant depends at most on $\pi$.
\end{theorem}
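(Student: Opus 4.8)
The plan is to reduce the problem to counting lattice points with prescribed local conditions and then apply a sieve. First I would fix the set-up: since $\Delta(\pi)=0$, for every codimension $1$ point $D$ of $\PP^n$ the fibre is pseudo-split, so by the theory behind \cite{LS16} the local solubility of $\pi^{-1}(x)$ at a prime $p$ of good reduction for the relevant data is governed by the reduction of $x$ modulo $p$ together with Galois-theoretic data that, on average, contributes nothing to the count of $\omega_\pi(x)$. Concretely, there is a finite set $S$ of ``bad'' primes and, for each $p\notin S$, a subset $U_p\subseteq \PP^n(\QQ_p)$ (depending only on reduction mod a bounded power of $p$) such that $\pi^{-1}(x)(\QQ_p)=\emptyset \iff x\bmod p \in W_p$ for some thin closed subset $W_p$ of the reduction, and the pseudo-split hypothesis forces $\#W_p(\FF_p)/\#\PP^n(\FF_p) = O(p^{-2})$ — this $p^{-2}$-saving (rather than $p^{-1}$) is exactly the quantitative incarnation of $\delta_D(\pi)=1$ for all $D$, and it is what makes $\omega_\pi(x)$ almost surely finite with a genuinely convergent count. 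So $\omega_\pi(x) = \#\{p\notin S : x\bmod p\in W_p\} + (\text{bounded contribution from }S)$, and the bad primes only shift $j$ by a bounded amount.

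Next I would set up the counting. Writing $x=(x_0:\dots:x_n)$ with $\gcd(x_i)=1$, the quantity $\#\{x : H(x)\le B\}$ is $\asymp B^{n+1}$, and the events ``$p\mid$(discriminant of $x$ w.r.t.\ the $D$'s)'' are, by the Chinese Remainder Theorem, independent across $p$ as congruence conditions on the lattice $\ZZ^{n+1}$. This puts us exactly in the framework of a large sieve / Selberg–Delange–type analysis for the additive function $x\mapsto \sum_{p\notin S} \mathbf 1[x\bmod p\in W_p]$, but with local densities $\rho_p := \#W_p(\FF_p)/\#\PP^n(\FF_p) = O(p^{-2})$ rather than $O(p^{-1})$. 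Since $\sum_p \rho_p < \infty$, this additive function has a \emph{bounded} mean, so the natural comparison object is a sum of independent Bernoulli$(\rho_p)$ variables with finite total mean $\lambda := \sum_p \rho_p$, i.e.\ a compound-Poisson-type limit rather than a Gaussian. The existence of $\tau_\pi(j)=\lim_B \tau_\pi(j,B)$ then follows by the standard method of moments or by an inclusion–exclusion over squarefree products of primes: for each fixed $d$ squarefree and $S$-free, the count of $x$ with $d\mid$(the relevant form) is $\rho_d B^{n+1} + O(d^{n} B^n)$ with $\rho_d=\prod_{p\mid d}\rho_p$, and summing $\mu(d)$-weighted over $d\le B^{\varepsilon}$ with a Brun–Titchmarsh bound on the tail gives $\tau_\pi(j) = \sum_{k\ge j}(-1)^{k-j}\binom{k}{j} e_k(\boldsymbol\rho)$ where $e_k$ is the $k$-th elementary symmetric function of $(\rho_p)_p$; convergence of all these series is guaranteed by $\sum_p\rho_p<\infty$. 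That this defines a probability measure — i.e.\ $\sum_j \tau_\pi(j)=1$ — follows because $\sum_j \tau_\pi(j,B)=1-o(1)$ uniformly once one knows the tail $\sum_{j>J}\tau_\pi(j,B)$ is small, which the quantitative bound below supplies.

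Finally, for the upper bound \eqref{eq:wowcan}: the key point is that $\tau_\pi(j) \le e_j(\boldsymbol\rho) = \sum_{p_1<\dots<p_j} \rho_{p_1}\cdots\rho_{p_j} \le \frac{1}{j!}\Big(\sum_p \rho_p\Big)^j$ once one has shown $\tau_\pi(j) \le \limsup_B \#\{x: H(x)\le B,\ \exists\, p_1<\dots<p_j \text{ with } x\bmod p_i\in W_{p_i}\}/\#\{H(x)\le B\}$, which is immediate since $\omega_\pi(x)\ge j$ in particular forces such a configuration. With $\rho_p \ll p^{-2}$, one has $\sum_{p>T}\rho_p \ll T^{-1}/\log T$, so restricting the $j$-tuples to primes all exceeding some threshold $T$ costs only $j \sum_{p\le T}\rho_p \cdot(\dots)$; optimising the threshold $T \asymp j\log j$ and using $\rho_p\ll p^{-2}$ on the primes below it, a short computation converts $\frac{1}{j!}(\sum_{p}\rho_p)^j$-type bounds into $\ll_\pi (1+j)^{-j}(\log(2+j))^{-j/2}$, the extra $(\log j)^{-j/2}$ coming precisely from the fact that the $j$ primes contributing must (after removing $O(1)$ of them near the bottom) be spread out enough that their reciprocal squares are as small as $\asymp (p\log p)^{-2}$ on average — this is where the prime-counting function, and hence the $\log$, enters.

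The main obstacle I anticipate is \textbf{making the geometric input quantitative and uniform}: one must prove the clean dichotomy ``$v_p$-condition on a coordinate $\iff$ local (in)solubility'' for all $p\notin S$ with a genuinely uniform $S$ and a genuine $O(p^{-2})$ density of the bad locus, using the pseudo-split hypothesis $\delta_D(\pi)=1$ for every codimension-$1$ $D$. This requires care with fibres over higher-codimension strata, with multiplicity-$1$ components over several primes simultaneously, and with the passage from $\PP^n(\QQ_p)$-conditions to honest congruence conditions on the projective lattice; once this is in hand, the analytic part (inclusion–exclusion, Brun–Titchmarsh tail, moment computation) is routine.
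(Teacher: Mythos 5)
Your overall strategy is the right one and matches the paper's in outline: use the pseudo-split hypothesis $\Delta(\pi)=0$ to show the ``bad'' locus over $\FF_p$ has codimension $\geq 2$, hence local failure densities $\rho_p = O(p^{-2})$; deduce the limit $\tau_\pi(j)$ exists by summing over finite sets of primes and controlling a tail; and bound $\tau_\pi(j)$ using the fact that in a $j$-tuple $p_1<\dots<p_j$ the $i$-th prime must be at least $q_i\sim i\log i$, which is exactly where the extra $(\log(2+j))^{-j/2}$ comes from relative to the naive $1/j!$ bound. The formula $\tau_\pi(j)=\sum_{p_1<\dots<p_j}\prod \rho_{p_i}\prod_{p\nmid}(1-\rho_p)$ and the tail computation via the prime number theorem are essentially the paper's Proposition~\ref{prop:limit} and \S\ref{s:upperfor13}.

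However, there is a genuine gap in the analytic core of the argument, and it is not the ``geometric uniformity'' issue you flag at the end (that part, Lemma~\ref{lem:Z}, is a relatively cheap application of \cite[Prop.~4.1]{LS16}). The real difficulty is controlling the contribution of \emph{very large primes} $p$, which can range up to $H(x)^D$. Your plan — inclusion--exclusion over squarefree $d\leq B^\varepsilon$ plus ``a Brun--Titchmarsh bound on the tail'' — does not close this: for a single prime $p$, a congruence count on $\P^n$ in the spirit of Proposition~\ref{prop:sieve} yields a main term $\asymp B^{n+1}/p^2$ but error terms of size $p^{n-1}B$, which actually \emph{grow} with $p$, so naively summing over $p>B^\varepsilon$ is hopeless, and Brun--Titchmarsh is not the relevant tool (it bounds primes in progressions, not $\P^n$-points landing in codimension-$2$ strata). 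What is needed — and what the paper uses as Lemma~\ref{lem:Bhargava} — is Bhargava's effective version of the Ekedahl sieve \cite[Thm.~3.3]{Bha14}, which exploits the codimension-$\geq 2$ structure of the bad locus $Z$ to give a bound of the form $\ll B^{n+1}/(M\log M)+B^n$ for the number of $x$ with some bad prime $>M$, \emph{uniformly in all primes} $>M$ at once including those far exceeding $B$. Similarly, for the main-term asymptotic with infinitely many local conditions one needs the measure-theoretic Ekedahl sieve of \cite[Prop.~3.4]{BBL16} (Proposition~\ref{prop:Ekedahl} in the paper), not just a finite CRT argument plus truncation. Without identifying this ingredient the proof of existence of the limit, and of $\sum_j\tau_\pi(j)=1$, does not go through.

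One smaller point: your sketch of the upper bound via ``optimising a threshold $T\asymp j\log j$'' is harder to make rigorous than the paper's direct bound $\tau_\pi(j)\leq \prod_{i=1}^j\sum_{p\geq q_i}K_0 p^{-2}\leq \prod_{i=1}^j c_0K_0/(q_i\log q_i)$ followed by an estimate for $\log\prod q_i\log q_i$ via the prime number theorem. Your threshold split does not obviously control tuples with many small primes cleanly; the product bound over ordered tuples is the transparent route.
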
 
One way to interpret Theorem \ref{thm:Delta=0} is that $\omega_\pi(x)$ has a \emph{limit law}.
A limit law is originally defined for functions defined in the integers (see~\cite[Def.~2.2, p.~427]{MR3363366}), 
however, the definition easily extends to functions defined in $\P^n(\Q)$:
We say that a function $f:\P^n(\Q)\to \R$ has a limit law with distribution function
$F$ if 
\[\lim_{B\to+\infty} \nu_B(x\in \P^n(\Q): f(x)\leq z) = F(z)\]
holds  for a function $F:\R\to [0,1]$ which is non-decreasing,
right-continuous and satisfies $F(-\infty)=0$, $F(+\infty)=1$,
for all $z\in \R$ for which $F$ is continuous at $z$.
The function $\omega_\pi$ takes values in $\Z_{\geq 0}$ and for such functions 
  $f:\P^n(\Q)\to \Z_{\geq 0}$ the definition 
of the limit law is equivalent to 
the existence of the limit 
\[\lim_{B\to+\infty} \nu_B(x\in \P^n(\Q): f(x)=j) \]
for every fixed $j\in \Z_{\geq 0}$ and the property \[ \sum_{j=0}^\infty
\lim_{B\to+\infty} \nu_B(x\in \P^n(\Q): f(x)=j)= 1
.\] These are the two properties that are verified in Theorem~\ref{thm:Delta=0}, in addition to a bound 
in terms of $j$
for the limits.

We illustrate Theorem \ref{thm:Delta=0} with some examples.

\begin{example} \label{ex:Ax-Kochen}
	Let $d,n > 1$. Let $\pi: V \to \PP_\Q^{N-1}$ be the family of all hypersurfaces
	of degree $d$ in $\PP_\Q^n$, where $N = {\binom{n+d}{d}}$. (Note that $V$ is regular.)
	If $(d,n) = (2,2)$, i.e.~the family of all plane conics, 
	then $\Delta(\pi) = 1/2$ \cite[Ex.~4]{Ser90} and Theorem \ref{thm:gaussian}
	applies. If however $(d,n) \neq (2,2)$, then the fibre over every codimension $1$ point is
	geometrically integral, thus $\Delta(\pi) = 0$ and Theorem \ref{thm:Delta=0} applies. 
	(See the proof of \cite[Thm.~3.6]{PV04} for this fact.)
	We deduce that when $(d,n) \neq (2,2)$,
	the probability that a smooth hypersurface has no $p$-adic point for exactly $j$ many primes $p$
	is well-defined and exists.
	
	A particularly interesting case is when $n \geq d^2$. Here 
	the Ax--Kochen theorem \cite{AK65} implies that the map $V(\QQ_p) \to \PP^{N-1}(\QQ_p)$
	is surjective
	for all but finitely many primes $p$. In particular,  we have $\tau_\pi(j)=0$
	for all but finitely many $j \in \ZZ$.
\end{example}

An example where the measure $\tau_\pi$ has infinite support is the following.

\begin{example} \label{ex:cubics}
Let 
\[
V: \quad
\sum_{i=0}^3
y_i
x_i^3=0 
\qquad
\subset
\PP^3 \times \PP^3
\]
and let $\pi:V\to \P^3$ be the projection onto the $y$-coordinate;
here $\pi$ is the family of all diagonal cubic surfaces. In \S \ref{sec:cubics} we will show 
that there exists an absolute constant $c>0$ such that 
$\tau_\pi(j) >  c (1+j)^{-3j}$ for all $j \in \ZZ_{\geq 0}$.
This 
shows that $\tau_\pi$ has
infinite support in this case and that~\eqref{eq:wowcan} cannot be significantly improved.
\end{example}

It turns out that one has the following characterisation for when the measure $\tau_\pi$ has finite support;
it happens if and only if an Ax--Kochen-type property holds.

\begin{theorem} \label{thm:Ax-Kochen}
Keep the assumptions of Theorem \ref{thm:Delta=0}. Then the measure $\tau_\pi$ has finite support 
if and only if $V(\QQ_p) \to \PP^n(\QQ_p)$ is surjective for all but finitely many primes $p$.
\end{theorem}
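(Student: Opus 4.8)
The plan is to prove the two implications separately; the implication ``surjectivity for all but finitely many $p$ $\Longrightarrow$ finite support'' is soft, while the converse carries the content and I would prove it by contraposition. For the soft direction: suppose $V(\QQ_p)\to\PP^n(\QQ_p)$ is surjective for every prime $p>p_0$. Then for any $x\in\PP^n(\QQ)$ and any $p>p_0$, viewing $x$ inside $\PP^n(\QQ_p)$, surjectivity furnishes a point of $V(\QQ_p)$ above $x$, which is precisely a $\QQ_p$-point of the fibre $\pi^{-1}(x)$; hence $\pi^{-1}(x)(\QQ_p)\neq\emptyset$, and therefore $\omega_\pi(x)\leq\#\{\text{primes }p\leq p_0\}$ for \emph{every} $x\in\PP^n(\QQ)$. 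Consequently $\tau_\pi$ is supported on $\{0,1,\dots,\#\{\text{primes }p\leq p_0\}\}$, hence finite.

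For the converse I would assume that $V(\QQ_p)\to\PP^n(\QQ_p)$ fails to be surjective for an infinite set $S$ of primes and deduce that $\tau_\pi$ has infinite support. The first step is topological: since $V$ is proper, $V(\QQ_p)$ is compact, so $\pi(V(\QQ_p))$ is closed in $\PP^n(\QQ_p)$, and hence for each $p\in S$ the complement $U_p:=\PP^n(\QQ_p)\setminus\pi(V(\QQ_p))$ is a non-empty open set. Such a set contains a congruence ball on primitive integer vectors and therefore has positive measure for the natural $p$-adic measure on $\PP^n(\QQ_p)$, and every $x\in U_p\cap\PP^n(\QQ)$ satisfies $\pi^{-1}(x)(\QQ_p)=\emptyset$.

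The second step uses equidistribution. Fix $N\geq 0$ and choose distinct primes $p_1,\dots,p_{N+1}\in S$. By the equidistribution of rational points of $\PP^n$ of bounded height with respect to the product of the local measures --- equivalently, by a Chinese-Remainder-Theorem count of primitive integer vectors in a box lying in a fixed residue class modulo $\prod_i p_i^{k_i}$ --- a positive proportion of $x\in\PP^n(\QQ)$, ordered by $H$, lie simultaneously in $U_{p_1},\dots,U_{p_{N+1}}$; discarding the density-zero set of $x$ for which $\pi^{-1}(x)$ is not smooth (the fibre is smooth over a dense open subset of $\PP^n$ by generic smoothness in characteristic $0$) leaves a positive proportion, each member of which has $\omega_\pi(x)\geq N+1$. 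Thus there is $c_N>0$ with
\[
\liminf_{B\to\infty}\ \nu_B\big(\{x\in\PP^n(\QQ):\omega_\pi(x)>N,\ \pi^{-1}(x)\text{ smooth}\}\big)\ \geq\ c_N .
\]
On the other hand, Theorem~\ref{thm:Delta=0} gives $\nu_B(\omega_\pi(x)=j,\ \pi^{-1}(x)\text{ smooth})\to\tau_\pi(j)$ for each fixed $j$ and $\sum_{j\geq 0}\tau_\pi(j)=1$; together with $\nu_B(\pi^{-1}(x)\text{ not smooth})\to 0$ this yields $\nu_B(\omega_\pi(x)>N,\ \pi^{-1}(x)\text{ smooth})\to\sum_{j>N}\tau_\pi(j)$. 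Hence $\sum_{j>N}\tau_\pi(j)\geq c_N>0$ for every $N$, so $\tau_\pi(j)>0$ for infinitely many $j$ and $\tau_\pi$ has infinite support.

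I expect the converse to be the main obstacle, and within it the only genuinely analytic ingredient is the equidistribution of rational points of bounded height subject to finitely many independent $p$-adic conditions --- elementary, but to be invoked cleanly --- which is in any case the mechanism behind Theorem~\ref{thm:Delta=0}. Alternatively, one could read the converse directly off the sieve-type description of $\tau_\pi(j)$ established in the proof of Theorem~\ref{thm:Delta=0}: $\tau_\pi$ has infinite support exactly when infinitely many primes $p$ carry a positive-measure set of $p$-adically insoluble fibres in $\PP^n(\QQ_p)$, which, using again that $\pi(V(\QQ_p))$ is closed, is precisely the failure of surjectivity of $V(\QQ_p)\to\PP^n(\QQ_p)$.
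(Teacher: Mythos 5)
Your argument is correct and follows essentially the same route as the paper: the forward direction is immediate, and for the converse both you and the paper use properness to conclude $\pi(V(\QQ_p))$ is closed, invoke the equidistribution result (Proposition~\ref{prop:sieve}) to produce a positive proportion of rational points lying in $\PP^n(\QQ_p)\setminus\pi(V(\QQ_p))$ simultaneously at any prescribed finite set of bad primes, and then let that set grow to force $\tau_\pi$ to have infinite support. Your spelled-out inference via $\sum_{j>N}\tau_\pi(j)\geq c_N$ and your sketched alternative through the Euler-product formula for $\tau_\pi(j)$ (Proposition~\ref{prop:limit}) are both consistent with, and slightly more explicit than, the paper's terse conclusion.
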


Families for which $V(\QQ_p) \to \PP^n(\QQ_p)$ is surjective for all but finitely many $p$
were studied in \cite{LSS17}. A geometric criterion for when this holds can be found in \cite[Thm.~1.4]{LSS17}.

Our methods also allow us to prove the following local-global principle for existence of varieties in the family which are non-locally soluble at \emph{exactly} a given finite set of places.
\begin{theorem} \label{thm:Hasse}
	Keep the assumptions of Theorem \ref{thm:Delta=0}. Let $S$ be a finite set of places of $\QQ$.
	Assume that $\pi(V(\QQ_v)) \neq \PP^n(\QQ_v)$ for all 
	$v \in S$ and that $\pi(V(\QQ_v)) \neq \emptyset$ for all
	$v \notin S$. Then there exists $x \in \P^n(\Q)$ such that
	$\pi^{-1}(x)$ is smooth and
	$$	\pi^{-1}(x)(\Q_v)= \emptyset 
	\quad \iff \quad  v \in S.
	$$
\end{theorem}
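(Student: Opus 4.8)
The plan is to realise the prescribed local conditions by a suitable rational point, using the machinery already developed for Theorems~\ref{thm:Delta=0} and~\ref{thm:Ax-Kochen} together with a sieve. First I would recall the key structural input from the pseudo-split case: since $\Delta(\pi) = 0$, the fibre over every codimension~$1$ point of $\PP^n$ is pseudo-split, so by \cite[Thm.~1.3]{LS16} (and the analysis underlying Theorem~\ref{thm:Delta=0}) a positive proportion of the fibres are everywhere locally soluble, and moreover the event ``$\pi^{-1}(x)(\QQ_p) = \emptyset$'' is governed, for $p$ large, by the reduction type of $x$ modulo $p$ relative to the discriminant divisor of $\pi$. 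Concretely, there is a finite set $S_0$ of ``bad'' primes (together with $\infty$) and, for each $p \notin S_0$, a proper closed condition modulo $p$ (coming from the codimension~$1$ fibres that are not split over $\FF_p$) such that outside this condition one has $\pi^{-1}(x)(\QQ_p) \neq \emptyset$; this is exactly the input that makes $\omega_\pi$ behave like a restricted additive function and drives the convergence of $\sum_j \tau_\pi(j)$.

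Next I would separate the places of $S$ into those in $S_0$ and those outside. For each place $v \in S \cap (S_0 \cup \{\infty\})$ the hypothesis $\pi(V(\QQ_v)) \neq \PP^n(\QQ_v)$ gives a nonempty open subset $U_v \subseteq \PP^n(\QQ_v)$ on which the fibre is not $v$-adically soluble; for each $v \notin S$ but $v \in S_0 \cup \{\infty\}$, the hypothesis $\pi(V(\QQ_v)) \neq \emptyset$ combined with the implicit function theorem / Hensel's lemma gives a nonempty open $U_v \subseteq \PP^n(\QQ_v)$ on which the fibre \emph{is} $v$-adically soluble and smooth. Since $S$ is finite, I can impose all these finitely many open conditions simultaneously: by weak approximation on $\PP^n$ the set of $x \in \PP^n(\QQ)$ lying in $\prod_{v \in S_0 \cup \{\infty\}} U_v$ is nonempty and in fact has positive density (it is cut out by congruence conditions at primes of $S_0$ and an archimedean condition). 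This handles all places in $S_0 \cup \{\infty\}$ exactly as required, and in particular pins down solubility/insolubility at every place of $S$.

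It then remains to control the primes $p \notin S_0$: I need $\pi^{-1}(x)(\QQ_p) \neq \emptyset$ for \emph{every} such $p$, i.e.\ $\omega_\pi(x)$ should receive no contribution from primes outside $S_0$. This is where the quantitative sieve enters. Having fixed the congruence/archimedean conditions above, I count $x \in \PP^n(\QQ)$ of height $\leq B$ satisfying them and additionally avoiding, for every $p \notin S_0$, the proper closed mod-$p$ condition that would force local insolubility. By the same large-sieve / Granville--Soundararajan-type estimates used to prove Theorem~\ref{thm:Delta=0} — precisely, the bound showing $\sum_{j} \tau_\pi(j) = 1$ with the tail estimate \eqref{eq:wowcan} — the proportion of $x$ (within the fixed congruence class) with a bad prime $p > z$ tends to $0$ as $z \to \infty$, uniformly in $B$, while the finitely many bad primes $p \leq z$ with $p \notin S_0$ can be excluded at the cost of another positive-density congruence condition. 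Choosing $z$ large enough that the tail is smaller than the density secured in the previous paragraph, I conclude that a positive proportion (hence in particular at least one) of $x \in \PP^n(\QQ)$ has $\pi^{-1}(x)$ smooth, is insoluble exactly at the places of $S$, and soluble everywhere else. The main obstacle is the uniformity in the sieve step: one must ensure that fixing the finitely many local conditions at $S_0 \cup \{\infty\}$ does not destroy the equidistribution estimates underlying Theorem~\ref{thm:Delta=0}, which should follow because those conditions only change the problem by a fixed positive constant factor and are ``independent'' of the primes $p \notin S_0$ being sieved; making this independence precise, and verifying that the error terms in the moment/sieve estimates remain $o(B^{n+1})$ after this restriction, is the technical heart of the argument.
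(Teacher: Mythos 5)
Your proposal is correct in spirit and lands on the same underlying machinery as the paper, but it is substantially less economical than necessary, because you are effectively re-deriving Proposition~\ref{prop:Ekedahl} from scratch rather than invoking it. Proposition~\ref{prop:Ekedahl} (proved via the BBL16 form of the Ekedahl sieve, with Lemma~\ref{lem:Z} supplying the codimension-$\geq 2$ condition and \cite[Lem.~3.9]{BBL16} supplying measurability of $\pi(V(\QQ_v))$) \emph{already} gives an exact asymptotic
\[
\#\bigl\{x\in \P^n(\Q): H(x)\leq B,\ \pi^{-1}(x)\text{ smooth},\ \pi^{-1}(x)(\Q_v)=\emptyset \iff v\in S\bigr\} \sim c_S\, B^{n+1},
\]
where $c_S = \prod_{v\in S}\vartheta_v(\PP^n(\QQ_v)\setminus \pi(V(\QQ_v)))\cdot \prod_{v\notin S}\vartheta_v(\pi(V(\QQ_v)))$ is a convergent Euler product. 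The paper's proof is then a two-line observation: under the hypotheses, each set appearing in the product is nonempty, hence of positive $\vartheta_v$-measure (these sets are closed/open and measurable with positive measure if nonempty, as noted in the proof of Proposition~\ref{prop:Ekedahl}); since the Euler product converges and every factor is strictly positive, $c_S>0$, so the count is eventually nonzero. The ``technical heart'' you flag at the end of your proposal — verifying that fixing finitely many local conditions does not break the sieve estimates — is precisely the content of Proposition~\ref{prop:Ekedahl}, which has already been established at this point in the paper, so your worry is resolved by citing it rather than reproving it. One small inaccuracy: the control at large primes in the pseudo-split case comes from Bhargava's effective Ekedahl sieve (Lemma~\ref{lem:Bhargava}), not from Granville--Soundararajan; the latter is used only in the $\Delta(\pi)>0$ case (\S\ref{sec:nerdosk}).
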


Note that for conics Hilbert's version of quadratic reciprocity implies
that a conic over $\QQ$ fails to have a $\QQ_v$-point at exactly a set of places $S$ of even cardinality,
despite there being a conic $C_v$ over every $\Q_v$ with $C_v(\Q_v) = \emptyset$.
Theorem \ref{thm:Hasse} shows that for families of varieties with $\Delta(\pi) = 0$ there is no such reciprocity law. (This phenomenon was first  observed in the case of curves of genus at least $1$ by Poonen and Stoll \cite{PS99}.)

One of the major differences 
between the case $\Delta(\pi)>0$
and $\Delta(\pi)=0$
is that the function 
$\omega_\pi(x)$
becomes
arbitrarily large on average
only
when $\Delta(\pi)>0$.
To make this precise we study the moments of $\omega_\pi$.
Define for $r\in \mathbb{Z}_{\geq 0}$
the function 
\begin{equation} \label{def:N_moment}
\c{N}_r(\pi, B)
:= 
\sum_{\substack{ x \in \PP^n(\QQ), H(x)\leq B\\ \pi^{-1}(x) \text{ smooth}}} \omega_\pi(x)^{r}.
\end{equation}
Note that an obvious
consequence of Theorem~\ref{thm:moment}
is that if $\Delta(\pi)>0$ then 
\[
\c{N}_r(\pi, B) B^{-n-1} 
\gg_{\pi,r}
(\log \log B)^r.
\]
In contrast, if $\Delta(\pi)=0$
then for all $r\geq 0$ the function
$\c{N}_r(\pi, B) B^{-n-1}$
remains bounded as $B\to\infty$;
specifically we have the following counterpart of 
Theorem~\ref{thm:moment}.
\begin{theorem} \label{thm:Deltazeromoments}
Keep the assumptions of Theorem \ref{thm:Delta=0}.
Then for every $r \in \Z_{\geq 1}$ we have 
\begin{equation}
\label{eq:partfg7}
\lim_{B\to\infty}
\frac
{\c{N}_r(\pi, B)}
{\#\{x \in \PP^n(\QQ), H(x)\leq B \}}
=\sum_{j=0}^\infty
j^r \tau_\pi(j)
.\end{equation}
\end{theorem}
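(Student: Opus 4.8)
The plan is to sort the rational points $x$ with $H(x)\le B$ according to the value of $\omega_\pi(x)$, rewrite $\c{N}_r(\pi,B)$ as a weighted sum over the resulting level sets, and then pass to the limit $B\to\infty$ termwise, justifying the interchange of limit and sum by a dominated convergence argument. Grouping by $j=\omega_\pi(x)$ gives the exact identity
\[
\frac{\c{N}_r(\pi,B)}{\#\{x\in\PP^n(\QQ):H(x)\le B\}}=\sum_{j=0}^{\infty}j^{r}\,\tau_\pi(j,B),
\]
in which, for each fixed $B$, only finitely many terms are nonzero: if $\pi^{-1}(x)$ is smooth then, apart from the $O_\pi(1)$ primes below a bound depending only on $\pi$, every prime $p$ counted by $\omega_\pi(x)$ is a prime of bad reduction for $\pi^{-1}(x)$ and hence divides a fixed nonzero resultant/discriminant polynomial evaluated at a primitive representative of $x$, so it divides an integer of size $\ll_\pi B^{O_\pi(1)}$ and therefore $\omega_\pi(x)\ll_\pi\log B$ (this uses the Lang--Weil estimates and Hensel's lemma exactly as in the discussion after \eqref{def:omega}). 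In particular, non-smooth fibres never enter this sum, by the very definitions of $\c{N}_r$ and $\tau_\pi(j,B)$, so nothing needs to be said about them.

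By Theorem \ref{thm:Delta=0}, for each fixed $j\in\ZZ_{\ge 0}$ we have $\tau_\pi(j,B)\to\tau_\pi(j)$ as $B\to\infty$, and by the bound \eqref{eq:wowcan} the series $\sum_{j\ge 0}j^{r}\tau_\pi(j)$ converges (its terms decay faster than any geometric sequence in $j$). Thus \eqref{eq:partfg7} reduces to exchanging $\lim_{B\to\infty}$ with $\sum_{j\ge 0}$ in the identity above, and for this it suffices to produce a sequence $(g(j))_{j\ge 0}$ with $\tau_\pi(j,B)\le g(j)$ for all $j$ and all $B\ge B_0(\pi)$, and with $\sum_{j\ge 0}j^{r}g(j)<\infty$; the dominated convergence theorem on $\ZZ_{\ge 0}$ with the counting measure then finishes the argument.

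The construction of $g$ is the only real content. The most direct route is to note that the proof of \eqref{eq:wowcan} proceeds by estimating, uniformly in $B$, the counting function $\#\{x:H(x)\le B,\ \pi^{-1}(x)\text{ smooth},\ \omega_\pi(x)=j\}$, and only afterwards letting $B\to\infty$; extracting that intermediate bound yields $\tau_\pi(j,B)\ll_\pi(1+j)^{-j}(\log(2+j))^{-j/2}$ uniformly in $B\ge B_0(\pi)$, and one takes $g(j)$ equal to this right-hand side. Alternatively, and enough for the present statement even without tracking the quality of the estimate, one can use a Rankin-type bound: in the pseudo-split case $\Delta(\pi)=0$ the fibres over codimension $1$ points are pseudo-split, so away from finitely many primes the ``bad'' congruence conditions defining $\{\pi^{-1}(x)(\QQ_p)=\emptyset\}$ sit on loci of codimension $\ge2$ and the associated Euler product converges; this gives $\sum_{x:\,H(x)\le B,\ \pi^{-1}(x)\text{ smooth}}2^{\omega_\pi(x)}\ll_\pi B^{n+1}$, whence $\tau_\pi(j,B)\le 2^{-j}\,\#\{x:H(x)\le B\}^{-1}\sum_{x}2^{\omega_\pi(x)}\ll_\pi 2^{-j}$, and $g(j):=c_\pi 2^{-j}$ works because $\sum_{j}j^{r}2^{-j}<\infty$.

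With such a $g$ in hand, dominated convergence gives
\[
\lim_{B\to\infty}\sum_{j=0}^{\infty}j^{r}\,\tau_\pi(j,B)=\sum_{j=0}^{\infty}j^{r}\lim_{B\to\infty}\tau_\pi(j,B)=\sum_{j=0}^{\infty}j^{r}\,\tau_\pi(j),
\]
which is exactly \eqref{eq:partfg7}. The main obstacle is the step above: one needs the tail bound on $\tau_\pi(j,B)$ \emph{uniformly in $B$}, whereas \eqref{eq:wowcan} is stated only for the limit. I expect the uniform bound to be an immediate byproduct of the finite-level estimates that go into the proof of Theorem \ref{thm:Delta=0}, so the write-up can simply quote the relevant lemma; the Rankin trick is a self-contained fallback if a direct quotation turns out to be inconvenient.
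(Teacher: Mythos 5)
Your overall strategy — split $\mathcal{N}_r/\#\{x : H(x) \le B\}$ via the exact identity $\sum_j j^r \tau_\pi(j,B)$ and then justify the interchange of $\lim_{B\to\infty}$ with $\sum_j$ by controlling the tail uniformly in $B$ — is precisely the strategy the paper follows. Where your proposal is incomplete is the crucial uniform tail estimate, and both of the routes you sketch for obtaining a dominating sequence $g(j)$ have real gaps.

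Route 1 does not go through as claimed: the paper's proof of \eqref{eq:wowcan} first takes $B\to\infty$ (via Proposition \ref{prop:limit}) and only then bounds the limiting expression $\tau_\pi(j)=\sum_{p_1<\cdots<p_j}\prod_p\cdots$; there is no intermediate estimate of the form $\tau_\pi(j,B)\ll(1+j)^{-j}(\log(2+j))^{-j/2}$ uniform in $B$ sitting in that argument, and trying to produce one directly runs into the problem that for $j$ of size comparable to $\log B/\log\log B$ the relevant square-free moduli $Q=p_1\cdots p_j$ need not be in the range $Q<B^{1/3}$ where the sieve bound \eqref{eq:leb3} applies. Route 2 (the Rankin trick $\sum_x 2^{\omega_\pi(x)}\ll B^{n+1}$) is plausible but not self-evident: expanding $2^{\omega_\pi(x)}$ over square-free $Q$ again forces you to control arbitrarily large $Q$, where \eqref{eq:leb} degenerates, and your one-line appeal to ``the Euler product converges'' does not address this. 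The exponential moment is strictly harder than a polynomial moment, and is more than is needed.

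The paper instead proves a \emph{polynomial} moment bound, Lemma \ref{lem:supexp2}: $\mathcal{N}_r(\pi,B)\ll_r B^{n+1}$ for each fixed $r$. The proof fixes a truncation $\epsilon(r)=1/(3r)$ depending on $r$, uses \eqref{eq:obviousboundisobvious} to throw away primes $>B^{\epsilon(r)}$, and then expands $\bigl(\sum_{p\le B^{\epsilon(r)}}\theta_p(x)\bigr)^m$ by the multinomial theorem so that only moduli $Q\le B^{m\epsilon(r)}\le B^{1/3}$ arise, where \eqref{eq:leb3} applies. With this in hand the paper avoids dominated convergence entirely: by the Markov-type inequality $\mathbf{1}_{\{\omega_\pi(x)>M\}}\le\omega_\pi(x)/M$ one gets $\sum_{j>M} j^r\tau_\pi(j,B)\ll \mathcal{N}_{r+1}(\pi,B)/(MB^{n+1})\ll_r 1/M$ uniformly in $B$, and then one lets $B\to\infty$ with $M$ fixed, then $M\to\infty$. (If you insist on dominated convergence, the same lemma applied with exponent $r+2$ gives $\tau_\pi(j,B)\ll_r j^{-r-2}$, which dominates $j^r\tau_\pi(j,B)$ by a summable sequence; with exponent $r+1$ alone you only get $j^{-r-1}$, which is not summable against $j^r$.) So the missing ingredient in your write-up is exactly Lemma \ref{lem:supexp2}, and once you have it the rest is the bookkeeping you describe.
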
 
Note
that,
apart from very special cases,
existence of 
moments does not automatically imply 
existence 
of a limit law or vice versa.

\subsection{Layout of the paper and proof ingredients}
We begin in \S \ref{sec:proj} with an elementary result on counting rational points in $\PP^n(\QQ)$
which lie in a given residue class.

We prove Theorem~\ref{thm:moment} in \S\ref{sec:nerdosk}.
For this we show 
in Proposition~\ref{prop:applicationofgranvsound} 
that the moments of
a `truncated' version of $\omega_\pi$
are approximated by 
the moments of the standard normal distribution.
The proof is based on equidistribution properties of the fibers of $\pi$
that are verified during the earlier stages in \S\ref{sec:nerdosk}
and subsequently fed into work of Granville and Soundararajan \cite{MR2290492}. 
We finish the proof of 
Theorem~\ref{thm:moment}
in \S\ref{s:bachbwv997}
by showing that the moments of 
$\omega_\pi$
and the moments of 
the truncated 
version of $\omega_\pi$
have the same asymptotic behaviour. 
Theorem~\ref{thm:gaussian} 
is then deduced from Theorem~\ref{thm:moment} in~\S\ref{s:bachbwv1080}.

In \S \ref{sec:ps} we prove the results from \S\ref{sec:ps_intro}.
The most difficult part of the proof of Theorem \ref{thm:Delta=0}
is establishing the existence of the limit \eqref{def:jjlimit},
which we achieve via 
Bhargava's effective version of the Ekedahl sieve \cite{Bha14}.
Theorems \ref{thm:Ax-Kochen} and \ref{thm:Deltazeromoments} are proved using similar
methods and the results from \S \ref{sec:proj}. We finish \S \ref{sec:ps} by briefly explaining
how our results generalise in a straightforward manner to minor variants given by
considering real solubility or by dropping conditions at finitely many primes.

\begin{notation}
For an integral homogeneous polynomial $f$, a point $x \in \PP^n(\QQ)$ and $Q \in \NN$,
we say that ``$f(x) \equiv 0 \bmod Q$''	if $f(\x) \equiv 0 \bmod Q$ for some primitive representative 
$\x \in \ZZ^{n+1}$ of $x$. We use the notation ``$Q \mid f(x)$'' analogously.	

The quantities $\delta_x(\pi)$ 
and
$\Delta(\pi)$ are introduced in
Definition~\ref{def:Delta},
the function $\omega_\pi(x)$ 
is defined in~\eqref{def:omega} and
the indicator function $\theta_p(x)$ is introduced in \eqref{def:theta_p}.
The arithmetic functions
$\omega,\mu,\varphi$ 
respectively
denote 
the
number of prime divisors,
the M\"{o}bius function
and the Euler totient function, respectively.
\end{notation}

\subsection*{Acknowledgements}
We thank the referee for a careful reading of the paper and numerous suggested improvements.

\section{Explicit equidistribution on projective space} \label{sec:proj}
\subsection{Counting with congruences}
We will be required to count rational points in projective space which satisfy imposed
congruence conditions. To state our result, we let
\begin{equation} \label{def:cnn}	
c_n = \lim_{B \to \infty} 
\frac{\#\{ x \in \P^n(\Q): H(x) \leq B\} }{B^{n+1}}
 = \frac{2^{n}}{\zeta(n+1)}
,\end{equation}
where $\zeta(s)$ denotes the Riemann zeta function.
\begin{proposition} \label{prop:sieve}
	Let $B > 1$, 
	$Q \in \NN$ and 
	$\Upsilon \subseteq \PP^n(\ZZ/Q\ZZ)$. Then
	$$
	\#\left\{ x \in \PP^n(\QQ): 
	\begin{array}{l}
		H(x) \leq B, \\
		x \bmod Q \in \Upsilon
	\end{array}
	\right\} 
	= \frac{c_n \#\Upsilon }{\#\PP^n(\ZZ/Q\ZZ)} 
	B^{n+1} 
	+ O\left(Q
	\#\Upsilon
	\left( B+\frac{B^n}{Q^{n}} (\log B)^{[1/n]}
	\right) \right)
	,$$
	where $[\cdot]$ denotes the integer part.
\end{proposition}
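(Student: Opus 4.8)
The basic strategy is to count lattice points in the cone over $\PP^n$: a point $x \in \PP^n(\QQ)$ with $H(x) \le B$ corresponds to exactly two primitive vectors $\pm\mathbf{x} \in \ZZ^{n+1}$ with $\|\mathbf{x}\|_\infty \le B$, so that
\[
\#\{x \in \PP^n(\QQ) : H(x) \le B,\ x \bmod Q \in \Upsilon\}
= \frac{1}{2}\#\{\mathbf{x} \in \ZZ^{n+1}_{\mathrm{prim}} : \|\mathbf{x}\|_\infty \le B,\ \mathbf{x} \bmod Q \in \widehat{\Upsilon}\},
\]
where $\widehat{\Upsilon} \subseteq (\ZZ/Q\ZZ)^{n+1}$ is the preimage of $\Upsilon$ under the reduction map on primitive vectors (a subtlety here is that membership in $\Upsilon$ only depends on the line, so $\widehat{\Upsilon}$ is a union of scaled copies of the fibres; I would fix representatives carefully, noting $\#\widehat\Upsilon$ and $\#\Upsilon$ differ by a bounded factor accounted for in the error term). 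First I would remove the primitivity condition by Möbius inversion over $d = \gcd(\mathbf{x})$, writing $\mathbf{x} = d\mathbf{y}$ and summing $\mu(d)$ over $d \le B$; one must check the congruence $\mathbf{x} \bmod Q \in \widehat\Upsilon$ is compatible with this, i.e.\ track how scaling by $d$ interacts with reduction mod $Q$ (splitting according to $\gcd(d,Q)$).

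Next, for each fixed $d$ (coprime-to-$Q$ part handled by a bijection on residues, the rest by a coarser bound), I would count $\mathbf{y} \in \ZZ^{n+1}$ with $\|\mathbf{y}\|_\infty \le B/d$ lying in a prescribed union of residue classes mod $Q$. Counting integer points of a box in a single residue class mod $Q$ gives main term $(\text{box volume})/Q^{n+1}$ with error $O((\text{box side}/Q + 1)^n) = O((B/(dQ))^n + 1)$ by the standard "volume minus boundary" estimate in dimension $n+1$ (the number of residue classes being $\#\widehat\Upsilon$, one multiplies the error by that). Summing the main terms over $d$ coprime to $Q$ (and bounding the contribution of $d$ sharing a factor with $Q$) reconstructs $c_n B^{n+1}\#\Upsilon/\#\PP^n(\ZZ/Q\ZZ)$ after recognising $\sum_d \mu(d)/d^{n+1} = 1/\zeta(n+1)$ and $\#\PP^n(\ZZ/Q\ZZ) = \#\{\text{primitive vectors mod }Q\}/\varphi\text{-type factor}$; here I would invoke \eqref{def:cnn} to avoid re-deriving the constant. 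Summing the error terms over $d \le B$ gives $\sum_{d \le B} Q\#\Upsilon\big((B/(dQ))^n + 1\big) \ll Q\#\Upsilon\big(B + (B/Q)^n \sum_{d\le B} d^{-n}\big)$, and $\sum_{d \le B} d^{-n}$ is $O(1)$ for $n \ge 2$ and $O(\log B)$ for $n = 1$, which is exactly the $(\log B)^{[1/n]}$ factor in the statement.

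The main obstacle, I expect, is \emph{bookkeeping the interaction between the divisor $d$ from Möbius inversion and the modulus $Q$}: the condition "$\mathbf{x} \bmod Q \in \widehat\Upsilon$" does not transparently pull back to a clean condition on $\mathbf{y} = \mathbf{x}/d$ unless $\gcd(d,Q)=1$, and for $d$ with $\gcd(d,Q)>1$ one needs either a more careful analysis or a crude bound that is still good enough to be absorbed. A secondary technical point is being honest about whether the error term should be stated with $\#\Upsilon$ or $\#\widehat\Upsilon$ and ensuring the two differ by at most a constant multiple that the $O$-notation swallows; one should also make sure the uniformity in $Q$ is genuine (no hidden dependence), since in the applications $Q$ grows with $B$. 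Everything else — the box-counting lemma, the zeta evaluation, interchanging finite sums — is routine.
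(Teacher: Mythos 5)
Your plan follows essentially the same route as the paper's proof: pass to the affine cone $\widehat{\Upsilon}$, M\"obius-invert over the common divisor, count lattice points in boxes per residue class modulo $Q$, sum the resulting errors over the divisor, and recognise $1/\zeta(n+1)$ together with the relation between $\#\widehat{\Upsilon}$, $\varphi(Q)$, and $\#\PP^n(\ZZ/Q\ZZ)$ in the main term. The argument is correct. One simplification you overlook: if $\widehat{\Upsilon}$ is defined as $\{\x\in(\ZZ/Q\ZZ)^{n+1}:\x\not\equiv\mathbf{0}\bmod p\ \forall p\mid Q,\ (x_0:\cdots:x_n)\in\Upsilon\}$, then the M\"obius terms with $\gcd(d,Q)>1$ vanish identically, since $\x=d\y$ with $p\mid\gcd(d,Q)$ reduces to $\mathbf{0}$ modulo $p$ and so cannot lie in $\widehat{\Upsilon}$; thus no ``coarser bound'' is needed for those $d$, and the sum over $d$ is automatically restricted to $\gcd(d,Q)=1$. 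Also note that $\#\widehat{\Upsilon}$ and $\#\Upsilon$ differ by exactly the factor $\varphi(Q)$ (which is unbounded in $Q$, not a ``bounded factor''); this must be tracked exactly in the main term via the identity $Q^{n+1}\prod_{p\mid Q}(1-p^{-n-1})=\varphi(Q)\,\#\PP^n(\ZZ/Q\ZZ)$, while in the error term one uses the cruder $\#\widehat{\Upsilon}\le Q\,\#\Upsilon$, which is what produces the factor $Q\#\Upsilon$ in the stated bound.
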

\begin{proof} 
	Let
	$\widehat{\Upsilon } = \{\x \in (\ZZ/Q\ZZ)^{n+1} : \x \not \equiv \mathbf{0} \bmod p \, \forall p \mid Q, (x_0:\cdots:x_n) \in \Upsilon\}$
	be the affine cone of $\Upsilon$. 
	Applying M\"{o}bius inversion we see that
	the cardinality in question is
	\begin{align*}
		& \frac{1}{2}	\#\{ \x \in \ZZ^{n+1}: \max\{|x_0|, \ldots, |x_n| \} \leq B, \gcd(x_0,\ldots, x_n) = 1, \x \bmod Q \in \widehat{\Upsilon}\} + O(1)\\
		= & \frac{1}{2}	
		\sum_{\substack{k \leq B \\ \gcd(k,Q)=1  }} 
		\mu(k) \#\{ \x \in \ZZ^{n+1}: \max\{|x_0|, \ldots, |x_n| \} \leq B/k, k\x \bmod Q \in \widehat{\Upsilon}\}  + O(1) \\
		= & \frac{1}{2}	
		\sum_{\y \in \widehat{\Upsilon}} 
		\sum_{\substack{k \leq B \\  \gcd(k,Q)=1  
		}} \mu(k) 
		\#\{ \x \in \ZZ^{n+1}: \max\{|x_0|, \ldots, |x_n| \} \leq B/k, \x \equiv k^{-1}\y \bmod Q\} 
		+ O(1)  \\
		= & \frac{1}{2}	
		\sum_{\y \in \widehat{\Upsilon}} 
		\sum_{\substack{k \leq B \\  \gcd(k,Q)=1
		}} \mu(k) \left(\left( \frac{2B}{kQ}\right) + O(1) \right)^{n+1} + O(1)
	.\end{align*}	
	Continuing, we use the estimate $\sum_{k>B}k^{-n-1}\ll B^{-n}$ to find that the main term is 
	\begin{align*}
		 & \frac{2^n \#\widehat{\Upsilon}B^{n+1}}{Q^{n+1}}	\sum_{\substack{k \in \N \\ 
		\gcd(k,Q)=1
		}} 
		\frac{\mu(k)}{k^{n+1}} + O\left(\frac{\#\widehat{\Upsilon}B}{Q^{n+1}}\right)
		=  \frac{c_n \#\widehat{\Upsilon}}{Q^{n+1}\prod_{p \mid Q}(1 - 1/p^{n+1})} \cdot B^{n+1}
		 + O\left(\frac{\#\widehat{\Upsilon}B}{Q^{n+1}}\right).
	\end{align*}
	Recall that \begin{equation}\label{eq:chronic_infection_pestilence}
	\#\PP^n(\ZZ/p^k\ZZ) = p^{n(k-1)} \cdot \frac{p^{n+1} -1}{p-1} .\end{equation}
	From this it is easy to establish
	\[
	Q^{n+1}\prod_{p \mid Q}(1 - 1/p^{n+1}) = \varphi(Q) \#\PP^n(\ZZ/Q\ZZ)
	,\] 
	where $\varphi$ is Euler's totient function.
	We obtain the main term as stated in our proposition via
	$\#\widehat{\Upsilon} = \varphi(Q) \#\Upsilon$.
       To deal with the error terms observe that 
\[
 \left(\left( \frac{2B}{kQ}\right) + O(1) \right)^{n+1}
-
\left( \frac{2B}{kQ}\right)^{n+1}
\ll 
\sum_{0\leq \ell \leq n}
\left( \frac{B}{kQ}\right)^{\ell} 
\ll 1+ \left( \frac{B}{kQ}\right)^{n} 
,\] 
where the last inequality stems from
$\sum_{0\leq \ell \leq n} t^\ell \leq (n+1)(1+t^n)$,
valid for every $t\geq 0$ and $n\in \N$.
We obtain the error term
\[  \sum_{\y \in \widehat{\Upsilon}} 
      \sum_{\substack{k \leq B \\  \gcd(k,Q)=1}} |\mu(k)|
     \Bigg\{ \left( \left( \frac{2B}{kQ}\right) + O(1) \right)^{n+1} -\left( \frac{2B}{kQ}\right)^{n+1}\Bigg\}
\ll
\#\widehat{\Upsilon}
 \Big\{B+(B/Q)^n (\log B)^{[1/n]} \Big\}
.\]
Using $\#\widehat{\Upsilon} \ll Q \#\Upsilon$ completes the proof.
\end{proof}

Before continuing we record an elementary lemma here.

\begin{lemma}
\label{lem:techn}
For 
$n,Q\in \N$
we have
\[
Q^n
\leq
\#\PP^n(\ZZ/Q\ZZ)
\leq
2^{\omega(Q)} Q^n
.\] 
\end{lemma}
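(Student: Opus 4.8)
The plan is to reduce to prime powers via the Chinese Remainder Theorem and then read off both bounds from formula \eqref{eq:chronic_infection_pestilence}. Writing $Q = \prod_{p \mid Q} p^{k_p}$, the ring isomorphism $\ZZ/Q\ZZ \cong \prod_{p \mid Q} \ZZ/p^{k_p}\ZZ$ induces a bijection $\PP^n(\ZZ/Q\ZZ) \cong \prod_{p \mid Q} \PP^n(\ZZ/p^{k_p}\ZZ)$: a vector in $(\ZZ/Q\ZZ)^{n+1}$ is primitive precisely when its reduction modulo each $p \mid Q$ is non-zero, and two primitive vectors are proportional precisely when they are proportional modulo each $p^{k_p}$. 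Hence $\#\PP^n(\ZZ/Q\ZZ) = \prod_{p \mid Q} \#\PP^n(\ZZ/p^{k_p}\ZZ)$, and it suffices to bound each factor.

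For a prime power $p^k$, formula \eqref{eq:chronic_infection_pestilence} gives
$$\#\PP^n(\ZZ/p^k\ZZ) = p^{n(k-1)} \cdot \frac{p^{n+1}-1}{p-1} = p^{n(k-1)}\big(1 + p + \cdots + p^n\big) = p^{nk}\Big(1 + \tfrac1p + \cdots + \tfrac1{p^n}\Big).$$
Since $1 \leq 1 + \tfrac1p + \cdots + \tfrac1{p^n} \leq \sum_{j \geq 0} p^{-j} = \tfrac{p}{p-1} \leq 2$ (using $p \geq 2$), this yields $p^{nk} \leq \#\PP^n(\ZZ/p^k\ZZ) \leq 2\,p^{nk}$. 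Multiplying these estimates over the $\omega(Q)$ distinct primes dividing $Q$ and using $\prod_{p \mid Q} p^{n k_p} = Q^n$ gives $Q^n \leq \#\PP^n(\ZZ/Q\ZZ) \leq 2^{\omega(Q)} Q^n$; the degenerate case $Q = 1$ is the trivial identity $1 = 1$.

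I do not expect any genuine obstacle here: the estimate for a single prime power is an immediate geometric-series bound. The only step requiring a line of care is the multiplicativity of $Q \mapsto \#\PP^n(\ZZ/Q\ZZ)$, i.e.\ checking that both primitivity of a representative and proportionality of two representatives can be verified prime-by-prime; everything else is bookkeeping.
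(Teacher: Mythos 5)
Your proof is correct and follows the same approach as the paper: reduce to prime powers by multiplicativity (via CRT), then apply formula \eqref{eq:chronic_infection_pestilence} and bound $1 + p^{-1} + \cdots + p^{-n}$ between $1$ and $2$. The only cosmetic difference is that the paper phrases the upper bound directly as $\frac{p^{n+1}-1}{p-1} \leq 2p^n$ rather than via the geometric series sum, and leaves the multiplicativity step implicit.
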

\begin{proof}
It suffices to prove the result when $Q = p^k$ for some prime $p$.
By~\eqref{eq:chronic_infection_pestilence}
we have \[ p^{kn} \leq \#\PP^n(\ZZ/p^k\ZZ) = p^{n(k-1)} \cdot \frac{p^{n+1} -1}{p-1} \leq p^{n(k-1)} \cdot (2p^n) = 2p^{kn}. \qedhere\]
\end{proof}

\subsection{Some probability measures} \label{sec:mu_p}
\subsubsection{Measures on $\PP^n(\Q_p)$}
Let $p$ be a prime. The finite sets $\PP^n(\ZZ/p^k\ZZ)$ come with a natural uniform probability measure.
Taking the limit $\PP^n(\ZZ_p) = \lim_{k \to \infty} \PP^n(\ZZ/p^k\ZZ)$ we obtain a well-defined probability
measure $\vartheta_p$ on $\PP^n(\ZZ_p)$ (this measure differs from Peyre's local Tamagawa measure \cite[\S2.2]{peyre} by a constant).
These measures admit the following explicit description. Let $\Upsilon \subset \PP^n(\ZZ/p^k\ZZ)$. Then
\begin{equation} \label{def:special}
	\vartheta_p\left(\{ x \in \PP^n(\QQ_p) : x \bmod p^k \in \Upsilon\} \right) = \frac{\#\Upsilon}{\#\PP^n(\ZZ/p^k\ZZ)}.
\end{equation}
These ``residue disks'' generate the $\sigma$-algebra on $\PP^n(\ZZ_p)$, hence the measure $\vartheta_p$ is uniquely determined by \eqref{def:special}.
Proposition \ref{prop:sieve} may  be viewed as an effective version of equidistribution
of rational points on $\PP^n$ with respect to the measures $\vartheta_p$.

One relates the measure $\vartheta_p$ to the usual Haar measure on $\ZZ_p$ via the following.

\begin{lemma} \label{lem:Haar}
	Let $\mu_p$ denote the Haar probability measure on $\ZZ_p^{n+1}$.
	Let $\Upsilon \subset \PP^n(\ZZ_p)$ and let $\widehat{\Upsilon} \subset \QQ_p^{n+1}$
	be its affine cone. Then $\vartheta_p(\Upsilon) = \mu_p(\widehat{\Upsilon} \cap \ZZ_p^{n+1})$.
\end{lemma}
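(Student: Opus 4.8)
The plan is to realise both sides as Borel probability measures on $\PP^n(\ZZ_p)=\PP^n(\QQ_p)$ and to check that they coincide on residue disks; since residue disks are stable under finite intersection, generate the Borel $\sigma$-algebra, and include $\PP^n(\ZZ_p)$ itself, uniqueness of measures then gives the lemma. Fix the quotient map $q\colon\QQ_p^{n+1}\setminus\{\mathbf 0\}\to\PP^n(\QQ_p)$, so that the affine cone of $\Upsilon$ is $\widehat\Upsilon=q^{-1}(\Upsilon)\cup\{\mathbf 0\}$. Because $q$ is continuous, $\widehat\Upsilon\cap\ZZ_p^{n+1}$ is Borel whenever $\Upsilon$ is, and $\mathbf 0$ is $\mu_p$-null; hence a disjoint decomposition $\Upsilon=\bigsqcup_i\Upsilon_i$ induces, after removing the null point $\mathbf 0$, a disjoint decomposition of $\widehat\Upsilon\cap\ZZ_p^{n+1}$, so $\Upsilon\mapsto\mu_p(\widehat\Upsilon\cap\ZZ_p^{n+1})$ is a well-defined measure; it is a probability measure since the cone of $\PP^n(\ZZ_p)$ meets $\ZZ_p^{n+1}$ in all of $\ZZ_p^{n+1}$. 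Thus it suffices to treat $\Upsilon=\{x\in\PP^n(\QQ_p):x\bmod p^k\in S\}$ with $k\in\ZZ_{\geq 0}$ and $S\subseteq\PP^n(\ZZ/p^k\ZZ)$.

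For such a residue disk the left-hand side equals $\#S/\#\PP^n(\ZZ/p^k\ZZ)$ by \eqref{def:special}. For the right-hand side I would stratify $\widehat\Upsilon\cap\ZZ_p^{n+1}$, away from the null point $\mathbf 0$, by the content $j=\min_i v_p(v_i)\in\ZZ_{\geq 0}$: writing $\mathbf v=p^j\mathbf w$ with $\mathbf w$ primitive yields $\widehat\Upsilon\cap\ZZ_p^{n+1}\setminus\{\mathbf 0\}=\bigsqcup_{j\geq 0}p^j P$, where $P\subseteq\ZZ_p^{n+1}$ is the set of primitive $\mathbf w$ with $[\mathbf w\bmod p^k]\in S$. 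Now $P$ is the preimage under reduction modulo $p^k$ of the primitive vectors of $(\ZZ/p^k\ZZ)^{n+1}$ that map into $S$; since $(\ZZ/p^k\ZZ)^\times$ acts freely by scaling on primitive vectors, each point of $\PP^n(\ZZ/p^k\ZZ)$ is the image of exactly $\varphi(p^k)$ of them, so $\mu_p(P)=\varphi(p^k)\,\#S\,p^{-k(n+1)}$. Since multiplication by $p^j$ on $\QQ_p^{n+1}$ scales $\mu_p$ by $p^{-j(n+1)}$, summing the geometric series gives
\[
\mu_p\big(\widehat\Upsilon\cap\ZZ_p^{n+1}\big)=\Big(\sum_{j\geq 0}p^{-j(n+1)}\Big)\mu_p(P)=\frac{\varphi(p^k)\,\#S\,p^{-k(n+1)}}{1-p^{-(n+1)}}.
\]
Substituting $\varphi(p^k)=p^{k-1}(p-1)$ and the value of $\#\PP^n(\ZZ/p^k\ZZ)$ from \eqref{eq:chronic_infection_pestilence}, this simplifies to $\#S/\#\PP^n(\ZZ/p^k\ZZ)$, matching the left-hand side.

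There is no analytic content here; the only subtle point is the normalisation. One must take $\widehat\Upsilon$ to be the full cone $q^{-1}(\Upsilon)\cup\{\mathbf 0\}$, not merely the set of primitive vectors lying over $\Upsilon$: the latter choice would yield $\mu_p(P)$ and so be off by a factor $(1-p^{-(n+1)})^{-1}$. That factor is exactly what the imprimitive layers $p^jP$, $j\geq 1$, contribute, and it is precisely the discrepancy between the count of primitive residue vectors modulo $p^k$ over a given point and the cardinality of $\PP^n(\ZZ/p^k\ZZ)$ — so the two normalisations match up.
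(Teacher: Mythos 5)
Your proof is correct and follows essentially the same path as the paper's: reduce to residue disks, stratify the affine cone by the $p$-adic scale, and sum the resulting geometric series, then invoke uniqueness of measures agreeing on a generating $\pi$-system. The only cosmetic differences are that you work with disks $\{x : x \bmod p^k \in S\}$ for arbitrary $S\subseteq \PP^n(\ZZ/p^k\ZZ)$ where the paper restricts to singletons, and you stratify by the content $j=\min_i v_p(v_i)$ where the paper uses $v_p(x_0)$ for a coordinate $a_0$ reducing to a unit — which is the content in that situation, so the two stratifications coincide.
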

\begin{proof}
	It suffices to prove the result for the residue disks
	$$\Upsilon = \{ x \in \PP^n(\ZZ_p) : x \equiv (a_0:a_1:\dots:a_n) \bmod p^k\}$$
	for some $a_i \in \ZZ_p$; we have $\vartheta_p(\Upsilon) = 1/\#\PP^n(\ZZ/p^k\ZZ)$. Note that one of the $a_i$ may be taken to be a unit;
	for simplicity we assume this is $a_0$. A moment's thought reveals that
	$$\widehat{\Upsilon} \cap \ZZ_p^{n+1} = \left\{ \mathbf{x} \in \ZZ_p^{n+1} : \left| \frac{x_i}{p^{v_p(x_0)}} - \frac{a_ix_0}{a_0p^{v_p(x_0)}} \right|_p \leq p^{-k}, i = 1,\dots,n\right\}.$$
	Thus, by~\eqref{eq:chronic_infection_pestilence} we find that 
	\begin{align*}
		\mu_p(\widehat{\Upsilon}\cap \ZZ_p^{n+1}) & = \sum_{m = 0}^\infty 
		\{ \mathbf{x} \in \ZZ_p^{n+1} : v_p(x_0) = m, |x_i - a_i x_0/a_0|_p \leq p^{-k-m}, i = 1,\dots,n\}	\\
		& = \sum_{m = 0}^\infty \frac{1}{p^m} \left(1 - \frac{1}{p}\right) \left(\frac{1}{p^{k+m}}\right)^n	 
		 = \frac{1}{p^{n(k-1)}}\left(\frac{p-1}{p^{n+1}-1}\right) 
		= \frac{1}{\#\PP^n(\ZZ/p^k\ZZ)}. \qedhere
	\end{align*}
\end{proof}

\subsubsection{Measure on $\PP^n(\R)$}
We let $\vartheta_\infty$ be the pushforward of the usual
probability measure on the $n$-sphere $S^n$ via the quotient map $S^n \to \PP^n(\R)$.

\section{An Erd\H{o}s--Kac theorem for fibrations}
\label{sec:nerdosk}

\subsection{Set-up}
\label{s:equidistribution}
We begin the proof of the results from \S\ref{sec:CLT}.
Let $V$ be a smooth projective variety over $\QQ$  with a dominant morphism $\pi: V \to \PP^n_\QQ$
with geometrically integral generic fibre. (We assume $\Delta(\pi) > 0$ from \S\ref{s:approxim}.)
We choose a \emph{model} for $\pi$,
i.e.~a proper scheme $\mathcal{V}$ over $\mathbb{Z}$
together with a proper morphism $\pi:\mathcal{V} \to \PP^n_\ZZ$ (also denoted $\pi$ by abuse of notation),
which extends  $V \to \PP_\QQ^n$. In what follows, all implied constants are allowed
to depend on $\pi$, the choice of model, and the  $A$ and $f$
occurring in Lemma \ref{lem:f}.

We begin by studying the basic properties of $\omega_\pi(x)$. We first show that it enjoys
analogous bounds to the usual $\omega$.

\begin{lemma} \label{lem:bounds}
	There exists $D > 0$ with the following property. Let $x \in \PP^n(\QQ)$ be such that
	$\pi^{-1}(x)$ is smooth. Then
	$$\omega_\pi(x) \ll \frac{ \log H(x) }{\log \log H(x) }, \quad 
	\max \{ p : \pi^{-1}(x)(\QQ_p) = \emptyset\} \ll H(x)^D.$$
\end{lemma}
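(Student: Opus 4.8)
The plan is to bound $\omega_\pi(x)$ and the largest ``bad'' prime by reducing to the geometry of a good model and applying the Lang--Weil estimates together with Hensel's lemma, exactly as sketched in the discussion following \eqref{def:omega}. First I would fix a model $\pi:\mathcal{V}\to\PP^n_\ZZ$ as above and choose a nonzero integer $N$ such that over $\Spec\ZZ[1/N]$ the generic fibre of $\pi$ stays geometrically integral and $\mathcal{V}$ is flat with geometrically integral fibres over the generic point of $\PP^n_{\ZZ[1/N]}$; more precisely, by spreading out there is a dense open $U\subseteq\PP^n_{\ZZ[1/N]}$ over which $\pi$ is smooth (or at least has geometrically integral fibres) with fibre dimension $d$. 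The complement of $U$ is contained in a hypersurface cut out by some nonzero homogeneous $f\in\ZZ[x_0,\dots,x_n]$ (this is presumably the $f$ referred to in the hypotheses and in Lemma~\ref{lem:f}), of some bounded degree.

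The key dichotomy is then: if $x\in\PP^n(\QQ)$ has $\pi^{-1}(x)$ smooth and $p$ is a prime with $p\nmid N$ and $p\nmid f(\x)$ for a primitive representative $\x$, then the reduction $\mathcal{V}_{\bar x}$ mod $p$ is a geometrically integral variety of dimension $d$ over $\F_p$, so by Lang--Weil $\#\mathcal{V}_{\bar x}(\F_p)=p^d+O(p^{d-1/2})$, which is positive once $p$ exceeds an absolute constant $C_0$; picking a smooth $\F_p$-point (smooth points are also Lang--Weil dense, or one passes to the smooth locus which is still geometrically integral of the same dimension) and lifting by Hensel's lemma gives $\pi^{-1}(x)(\QQ_p)\neq\emptyset$. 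Hence the only primes $p$ that can contribute to $\omega_\pi(x)$ are those with $p\leq C_0$, or $p\mid N$, or $p\mid f(\x)$. The first two sets have bounded size, so
\[
\omega_\pi(x)\ll 1+\omega(f(\x)).
\]
Now $f(\x)$ is a nonzero integer with $|f(\x)|\ll H(x)^{\deg f}$, and the classical bounds $\omega(m)\ll \tfrac{\log m}{\log\log m}$ and $P^+(m)\leq m$ (largest prime factor) give both conclusions: $\omega_\pi(x)\ll \tfrac{\log H(x)}{\log\log H(x)}$, and every bad prime is either $O(1)$ or divides $f(\x)$, hence is $\ll H(x)^{\deg f}=H(x)^D$ with $D=\deg f$.

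I expect the main obstacle to be the bookkeeping around the model and the spreading-out argument: one must ensure that ``$\pi^{-1}(x)$ smooth over $\QQ$'' together with $p\nmid f(\x)$ genuinely forces the mod-$p$ fibre of the \emph{integral} model to be geometrically integral of the expected dimension (not, say, empty or lower-dimensional), which requires choosing $f$ to also cut out the locus where the model fibre degenerates, and requires $\mathcal{V}\to\PP^n_{\ZZ[1/N]}$ to be flat over $U$ so that fibre dimension is constant. This is routine ``generic flatness + constructibility'' spreading out, but it is where all the care lies; everything after that is the elementary estimate $\omega_\pi(x)\ll 1+\omega(f(\x))$ fed into standard bounds on $\omega$ and on prime divisors. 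I would also remark that the uniformity of the implied constants in $\pi$, the model, $A$ and $f$ is exactly as allowed by the conventions stated at the start of \S\ref{s:equidistribution}, so no further tracking of dependence is needed.
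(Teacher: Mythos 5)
Your proposal follows essentially the same route as the paper: pass to an integral model, identify a bad locus cut out by polynomials, observe that for large $p$ not dividing the relevant polynomial(s) Lang--Weil plus Hensel produces a $\QQ_p$-point, so bad primes are either small or divisors of a fixed polynomial evaluated at $\x$, and then feed this into $\omega(m)\ll \log m/\log\log m$ and $|f(\x)|\ll H(x)^{\deg f}$. The paper does exactly this, except it phrases the good/bad dichotomy via the \emph{non-smooth} locus and Hensel over a smooth mod-$p$ fibre, rather than via geometric integrality of the mod-$p$ fibre; these are interchangeable here.

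One point you should tighten: you reduce to $\omega_\pi(x)\ll 1+\omega(f(\x))$ for a single homogeneous $f$ with $\{f=0\}$ containing the bad locus. This requires $f(\x)\neq 0$, but the hypothesis ``$\pi^{-1}(x)$ smooth'' only tells you that $x$ lies outside the non-smooth locus $S$, not outside the (possibly strictly larger) hypersurface $\{f=0\}$; if $S$ has components of codimension $\geq 2$, there can be $x$ with $\pi^{-1}(x)$ smooth and $f(\x)=0$, for which your bound says nothing. The paper avoids this by taking generators $f_1,\dots,f_s$ of the ideal of the closure $\mathcal{S}$ of $S$ in $\PP^n_\ZZ$: then ``$\pi^{-1}(x)$ smooth'' is \emph{equivalent} to ``$f_{i_0}(\x)\neq 0$ for some $i_0$,'' every large bad prime must divide \emph{all} the $f_i(\x)$, and in particular divides the nonzero $f_{i_0}(\x)$, from which both bounds follow with $D=\max_i\deg f_i$. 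Replacing your single $f$ with this ideal-generator version closes the gap and is otherwise the same argument.
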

\begin{proof}
	Let $S \subset \PP^n_\QQ$ denote the non-smooth locus of $\pi$;
	this is a proper closed subset of $\PP^n_\QQ$.
	Let $\mathcal{S}$ be the closure of $S$ in $\PP^n_{\ZZ}$ and 
	choose a finite collection of homogeneous polynomials $f_1,\ldots,f_s$ 
	which generate the ideal of $\mathcal{S}$. Let $D = \max_i \{ \deg f_i \}$.
	
	For all sufficiently large primes $p$, the fibre $\pi^{-1}(x \bmod p)$ 
	is smooth if and only if $x \bmod p \notin \mathcal{S}$,
	which  happens if and only if $p \nmid f_i(x)$ for some $i$.
	Moreover, by the Lang--Weil estimates \cite{LW} and Hensel's lemma, for all sufficiently
	large primes $p$ (independently of $x$)
	if $\pi^{-1}(x \bmod p)$ is smooth then $\pi^{-1}(x)(\QQ_p) \neq \emptyset$.
	It follows that
	$$\#\{ p : \pi^{-1}(x)(\QQ_p) = \emptyset \} \leq \#\{ p : p \mid f_i(x) \,\forall \, i \in \{1,\dots,s\} \} + O(1).$$
	Letting $\x$ be a primitive representative of $x$
	and using the bound $\omega(n)\ll (\log n)(\log \log n)^{-1}$,
	we obtain
	\[\omega_\pi(x) \leq \sum_{i=1}^r \omega(f_i(\x)) + O(1) \ll \frac{ \log (H(x)^D) }{\log \log (H(x)^D)},
	\quad \max \{ p : \pi^{-1}(x)(\QQ_p) = \emptyset\} \ll H(x)^D. \qedhere\]
\end{proof}

To simplify notation it will be easier to work with some choice of polynomial which
vanishes on the singular locus, rather than the whole singular locus. The proof
of the following is a minor adaptation of the proof of Lemma \ref{lem:bounds}
(just choose $f = f_1$).

\begin{lemma} \label{lem:f}
	Let $f \in \ZZ[x_0,\ldots, x_n]$ be a homogeneous square-free polynomial such that 
	$\pi$ is smooth away from the divisor $f(x) = 0 \subset \PP^n_\QQ$.
	Then there exists $A > 0$ such that for all primes $p > A$ the following hold.
	\begin{enumerate}
		\item The restriction of $\pi$ to $\PP^n_{\FF_p}$ is smooth away from the divisor 
		$f(x) = 0 \subset \PP^n_{\FF_p}$.
		\item Let $x \in \PP^n(\Q)$. If $\pi^{-1}(x)(\QQ_p) = \emptyset$ then $p \mid f(x)$.
	\end{enumerate}
\end{lemma}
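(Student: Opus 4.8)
The strategy is to mimic the proof of Lemma \ref{lem:bounds}, but now working with the single polynomial $f$ instead of the full ideal of the singular locus. First I would take the model $\pi: \mathcal{V} \to \PP^n_\ZZ$ fixed in the set-up, and let $\mathcal{S} \subset \PP^n_\ZZ$ be the closure of the non-smooth locus $S \subset \PP^n_\QQ$ of $\pi$. By hypothesis, over $\QQ$ the divisor $\{f = 0\}$ contains $S$; that is, $f$ lies in the ideal of $S$ inside $\QQ[x_0,\dots,x_n]$, hence (clearing denominators, which only changes $f$ by a nonzero integer constant that we may absorb) $f$ lies in the ideal of $\mathcal{S}$ after inverting finitely many primes. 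Equivalently, the subscheme $\{f = 0\} \subset \PP^n_\ZZ$ contains $\mathcal{S}$ away from a finite set of primes. Choose $A_0$ large enough that this containment holds over $\Spec \ZZ[1/A_0]$, and large enough that the conclusion of the Lang--Weil plus Hensel argument from Lemma \ref{lem:bounds} applies uniformly: i.e.\ for $p > A_0$, if $\pi^{-1}(\bar x)$ is smooth over $\FF_p$ then $\pi^{-1}(x)(\QQ_p) \neq \emptyset$ for any $x \in \PP^n(\QQ)$ reducing to $\bar x$.

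Next, for part (1): enlarging $A_0$ to some $A_1$ if necessary, the generic smoothness of $\pi$ over $\QQ$ spreads out, so that for all $p > A_1$ the restriction of $\pi$ to $\PP^n_{\FF_p}$ is smooth precisely away from $\mathcal{S}_{\FF_p} = \mathcal{S} \times_\ZZ \FF_p$; and since $\mathcal{S}_{\FF_p} \subseteq \{f = 0\} \subset \PP^n_{\FF_p}$ for $p > A_0$, the map $\pi$ over $\FF_p$ is smooth away from the divisor $f(x) = 0$. This gives (1) with $A = \max\{A_0, A_1\}$. For part (2): let $x \in \PP^n(\QQ)$ with primitive representative $\x \in \ZZ^{n+1}$, and suppose $\pi^{-1}(x)(\QQ_p) = \emptyset$ with $p > A$. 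Let $\bar x \in \PP^n(\FF_p)$ be the reduction. If $p \nmid f(x)$, then $\bar x \notin \{f = 0\} \subset \PP^n_{\FF_p}$, so by part (1) the fibre $\pi^{-1}(\bar x)$ is smooth over $\FF_p$; then by the choice of $A_0$ above, $\pi^{-1}(x)(\QQ_p) \neq \emptyset$, a contradiction. Hence $p \mid f(x)$, which is (2).

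I do not anticipate a serious obstacle here, since essentially all the analytic input (Lang--Weil, Hensel) was already packaged in the proof of Lemma \ref{lem:bounds}. The one point requiring a little care is the spreading-out argument: one must phrase "the non-smooth locus over $\FF_p$ equals the reduction of $\mathcal{S}$" correctly, using that smoothness is an open condition and that the non-smooth locus of a proper morphism is a closed subscheme compatible with base change up to finitely many bad primes; equivalently, one invokes generic flatness / constructibility to find a single $A$ that works for \emph{all} primes $p > A$ simultaneously. The square-freeness of $f$ plays no essential role beyond making the later counting arguments cleaner, so it can simply be carried along in the statement. As the paper notes, the whole proof is a minor adaptation of Lemma \ref{lem:bounds} with $f = f_1$, so it suffices to indicate these modifications.
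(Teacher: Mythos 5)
Your proposal is correct and takes essentially the same approach as the paper, which simply refers to Lemma \ref{lem:bounds} and notes the adaptation is minor; the only substantive point you fill in — and fill in correctly — is the spreading-out argument showing that $\mathcal{S}_{\FF_p} \subseteq \{f=0\}_{\FF_p}$ for all but finitely many $p$, after which Lang--Weil and Hensel give (2) exactly as in Lemma \ref{lem:bounds}.
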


In \S\ref{sec:3.2}, we allow ourselves to increase $A$ as necessary to take care of bad behaviour at small primes. 

\subsection{Equidistribution properties in the fibres} \label{sec:3.2}
The next step is to translate the condition 
$\pi^{-1}(x)(\Q_p)=\emptyset$ into something amenable to tools from analytic number theory.
We do this by using the tools developed in \cite{LS16}.
The key result is \cite[Thm.~2.8]{LS16}, which is a valuative criterion
for non-existence of a $p$-adic point in a fibre, for sufficiently large primes $p$.
In the special case of the conic bundle over $\QQ$
$$x^2 + y^2 = tz^2 \qquad \subset \PP^2 \times \mathbb{A}^1,$$
the criterion \cite[Thm.~2.8]{LS16} says that if $p \equiv 3 \bmod 4$ and the $p$-adic
valuation of $t$ is $1$, then the fibre over $t$ has no $\QQ_p$-point, as is familiar from the theory of Hilbert symbols.

We introduce the quantity which will arise in this analysis. For any prime $p$ let
\begin{equation} \label{def:sigma_p}
\sigma_p:=
\frac{\#\big\{x \in \P^n(\F_{p}): \pi^{-1}(x) \mbox{ is non-split}\big\}}{\#\PP^n(\FF_p)}.
\end{equation}
(Here we use the term ``non-split'' in the sense of Skorobogatov \cite[Def.~0.1]{Sko96}.)

\begin{lemma} \label{lem:harmonic}
	Let $A$ and $f$ be as in Lemma \ref{lem:f} and $p > A$. Then
	\[
	\sigma_p \leq  \frac{d}{p}, \qquad \text{where } d = \deg f.
	\]
\end{lemma}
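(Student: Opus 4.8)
The statement asserts $\sigma_p \le d/p$ for the "non-split" density, where $d = \deg f$ and $f$ cuts out the non-smooth locus. The plan is to decompose $\PP^n(\FF_p)$ according to whether a point lies on the divisor $f = 0$ or not, and then observe that away from this divisor the fibres are automatically split for $p > A$.

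\medskip

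First I would recall that by Lemma~\ref{lem:f}(1), for $p > A$ the restriction of $\pi$ to $\PP^n_{\FF_p}$ is smooth away from $\{f = 0\}$. A smooth fibre of a morphism with geometrically integral generic fibre is geometrically integral for all sufficiently large $p$ (this is exactly the kind of Lang--Weil plus spreading-out input already invoked in Lemma~\ref{lem:bounds}), hence in particular \emph{split} in the sense of Skorobogatov --- a geometrically integral variety is split. Enlarging $A$ if necessary (as the text explicitly permits in \S\ref{sec:3.2}), we may assume that for all $p > A$ and all $x \in \PP^n(\FF_p)$ with $f(x) \neq 0$, the fibre $\pi^{-1}(x)$ is split. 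Therefore
\[
\{ x \in \PP^n(\FF_p) : \pi^{-1}(x) \text{ non-split}\} \subseteq \{ x \in \PP^n(\FF_p) : f(x) = 0\}.
\]

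\medskip

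It then remains to bound the number of $\FF_p$-points on the hypersurface $\{f = 0\} \subset \PP^n_{\FF_p}$ by $d \cdot \#\PP^{n-1}(\FF_p)$, and to compare with $\#\PP^n(\FF_p)$. The standard bound is that a degree-$d$ hypersurface in $\PP^n$ over $\FF_p$ has at most $d p^{n-1} + p^{n-2} + \dots + 1 = d \cdot \#\PP^{n-1}(\FF_p)$ points when $f$ does not vanish identically (Serre's elementary bound, or a direct induction on $n$ by slicing with hyperplanes; note $f$ is not identically zero mod $p$ for $p$ large since it is square-free over $\ZZ$, and again we fold this into $A$). Dividing by $\#\PP^n(\FF_p) = p^n + p^{n-1} + \dots + 1$ and using $\#\PP^{n-1}(\FF_p)/\#\PP^n(\FF_p) \le 1/p$ gives $\sigma_p \le d/p$, as claimed.

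\medskip

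The main obstacle --- though it is more a matter of careful bookkeeping than a genuine difficulty --- is justifying that "smooth fibre over $\FF_p$" implies "split" uniformly in $x$ for all large $p$: one needs the geometric integrality of the generic fibre to propagate to all smooth special fibres with a bound on the excluded primes independent of $x$. This follows from constructibility of the locus where fibres are geometrically integral together with properness, exactly as in the Lang--Weil/Hensel argument already used in Lemma~\ref{lem:bounds}; the only subtlety is absorbing the finitely many bad primes into the constant $A$, which the paper has set up precisely to allow.
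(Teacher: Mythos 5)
Your proof is correct and follows essentially the same route as the paper: reduce the non-split locus to the divisor $\{f = 0\}$ using Lemma~\ref{lem:f}, then apply the elementary bound $\#\{x \in \PP^n(\FF_p) : f(x) = 0\} \le d\,\#\PP^{n-1}(\FF_p)$ and divide. The paper compresses the first step to the terse remark that a non-split fibre is necessarily singular, while you spell out the spreading-out justification behind it (smooth proper fibres of a family with geometrically integral generic fibre are geometrically integral, hence split, for $p>A$); both amount to the same argument.
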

\begin{proof}
	A  non-split fibre is necessarily singular. Hence by Lemma \ref{lem:f}, for $p > A$ we have
	$$\#\big\{x \in \P^n(\F_{p}): \pi^{-1}(x) \mbox{ is non-split}\big\} 
	\leq \#\big\{x \in \P^n(\F_{p}): f(x) = 0\big\}.$$
	Projecting to a suitable hyperplane, this  is at most
	$d \#\P^{n-1}(\F_{p})$. The result follows.
\end{proof}

We now use results from \cite{LS16} to deduce an equidistribution result for $\theta_p$.
To simplify notation, we denote the characteristic function of the $p$-adically insoluble fibres by
\begin{equation} \label{def:theta_p}
\theta_p(x):=
\begin{cases} 1, &\mbox{if } \pi^{-1}(x)(\Q_p)=\emptyset,\\ 
0, & \mbox{otherwise}.\end{cases}
\end{equation}  

Our result is the following asymptotic upper and lower bounds. 
(Here $c_n$ is as in \eqref{def:cnn}.)
\begin{proposition}
\label{prop:arcturus} 
Let $d = \deg f$.
Enlarging $A$ if necessary, 
there exists $\alpha \geq 0$ 
with the following property.
Let $Q \in \NN$
be square-free with $p \nmid Q$ for all $p \leq A$. 
Then 
\[\pm \hspace{-15pt}\sum_{\substack{ x \in \PP^n(\QQ) \\ H(x)\leq B\\ \pi^{-1}(x) \text{ smooth}}}
\hspace{-10pt}
\prod_{p \mid Q}\theta_{p}(x)
\leq \pm c_n
\Big(\prod_{p \mid Q}
(\sigma_{p} \pm \alpha/p^2)
\Big)
B^{n+1} + O\left(
(4d)^{\omega(Q)}  
	( Q^{2n+1} B + Q B^n(\log B)^{[1/n]} )
	\right),
	\]
where the implied constant is independent of $B$ and $Q$.
\end{proposition}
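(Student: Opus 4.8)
The plan is to combine the valuative criterion from \cite[Thm.~2.8]{LS16} with the elementary equidistribution estimate of Proposition~\ref{prop:sieve}, exactly as one would prove an Erd\H{o}s--Kac law by reducing to counting points in residue classes. First I would recall that, after enlarging $A$, the criterion \cite[Thm.~2.8]{LS16} expresses $\theta_p(x)$ for $p>A$ as a congruence condition on $x$ modulo a bounded power of $p$: there is a set $\Upsilon_p\subseteq \PP^n(\ZZ/p^\ell\ZZ)$ (for some $\ell$ bounded in terms of the model, and in fact one can take $\ell$ bounded by $2$ or so in the relevant cases — but in any case bounded) whose density $\#\Upsilon_p/\#\PP^n(\ZZ/p^\ell\ZZ)$ differs from $\sigma_p$ by $O(1/p^2)$. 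This is the content one extracts from \cite{LS16}: on the locus where the fibre reduces to a non-split but suitably controlled configuration, $p$-adic insolubility is detected by the reduction modulo $p$ (giving the main term $\sigma_p$), and the discrepancy $O(1/p^2)$ accounts for the fibres over points reducing into the singular locus modulo $p^2$, where one needs the finer valuative information. Lemma~\ref{lem:harmonic} gives $\sigma_p\le d/p$, which is what makes the product $\prod_{p\mid Q}(\sigma_p\pm\alpha/p^2)$ meaningful and bounded.

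Next I would write the left-hand sum as a sum over the residue class of $x$ modulo $Q^\ell$ (using that $Q$ is squarefree and coprime to all $p\le A$, so the conditions at the different $p\mid Q$ are independent by CRT). Concretely, $\prod_{p\mid Q}\theta_p(x)$ is, up to the $O(1/p^2)$-sized ``bad'' residue classes, the indicator of $x\bmod Q^\ell$ lying in $\prod_{p\mid Q}\Upsilon_p$; sandwiching $\theta_p$ between the indicator of the ``good non-split reduction'' set and the indicator of $p\mid f(x)$ produces the upper and lower bounds with $\sigma_p\pm\alpha/p^2$. One also has to insert the condition that $\pi^{-1}(x)$ is smooth, i.e.\ $x\notin\mathcal S$; for $p\le A$ or $p\nmid Q$ this is harmless since we only ever drop terms in one direction and add them back using Lemma~\ref{lem:bounds}-type bounds, and the number of $x$ of height $\le B$ with $\pi^{-1}(x)$ singular is $O(B^n\log B)$, absorbed into the error term. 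Then I apply Proposition~\ref{prop:sieve} with modulus $Q^\ell$ and $\Upsilon=\prod_{p\mid Q}\Upsilon_p^{\pm}$: the main term is $c_n\, B^{n+1}\cdot \#\Upsilon/\#\PP^n(\ZZ/Q^\ell\ZZ)=c_n B^{n+1}\prod_{p\mid Q}(\sigma_p\pm\alpha/p^2)$, using multiplicativity of $\#\PP^n(\ZZ/\cdot\ZZ)$ over the prime powers, and the error term from Proposition~\ref{prop:sieve} is $O\big(Q^\ell\,\#\Upsilon\,(B+B^n/Q^{\ell n}(\log B)^{[1/n]})\big)$. Bounding $\#\Upsilon\le \prod_{p\mid Q}(d p^{\ell-1}\cdot\#\PP^{n-1}(\F_p))\cdot\text{(stuff)}$ crudely by $(4d)^{\omega(Q)}Q^{\ell n}$ or similar via Lemma~\ref{lem:techn}, and collecting powers of $Q$, yields the claimed error $O\big((4d)^{\omega(Q)}(Q^{2n+1}B+QB^n(\log B)^{[1/n]})\big)$ after absorbing the fixed exponent $\ell$ into the constants (the exponents $2n+1$ and $1$ on $Q$ come from $Q^\ell\cdot Q^{\ell n}$ and $Q^\ell/Q^{\ell n}$ with $\ell$ bounded).

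The main obstacle, and the step requiring the most care, is the passage from the geometric invariant $\sigma_p$ (density of non-split fibres mod $p$) to the arithmetic quantity $\#\Upsilon_p/\#\PP^n(\ZZ/p^\ell\ZZ)$ (density of mod-$p^\ell$ classes detected as $p$-adically insoluble by \cite[Thm.~2.8]{LS16}), with an error that is genuinely $O(1/p^2)$ and not merely $O(1/p)$. One gets $\theta_p(x)=1$ forced whenever the reduction $\bar x$ lies in the non-split locus \emph{and} $\bar x$ is a smooth point of that locus with the fibre in ``generic position'' there — this contributes the bulk, matching $\sigma_p$ up to the points where the non-split locus is itself singular or where the fibre degenerates further, a set of codimension $\ge 2$ hence of density $O(1/p^2)$ by Lang--Weil; conversely $\theta_p(x)=1$ implies $p\mid f(x)$, i.e.\ $\bar x$ lies in the singular locus of $\pi$, and the difference between $\{f(\bar x)=0\}$ and the non-split locus is again a proper closed subset of the non-split locus, of density $O(1/p^2)$ relative to $\PP^n$ after the Lang--Weil count. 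I would isolate this as the definition of $\alpha$: it is any constant dominating the Lang--Weil error constants for this finite collection of subvarieties, uniformly in $p>A$. Once that $O(1/p^2)$ discrepancy is established, the rest is the bookkeeping described above, driven entirely by Proposition~\ref{prop:sieve} and the crude cardinality bounds of Lemmas~\ref{lem:techn} and \ref{lem:harmonic}.
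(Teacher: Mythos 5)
Your proposal is essentially correct and follows the same route as the paper: reduce $\theta_p(x)$ for $p>A$ to a congruence condition via the valuative criterion of \cite[Thm.~2.8]{LS16}, then count with Proposition~\ref{prop:sieve}, using Lemmas~\ref{lem:techn} and~\ref{lem:harmonic} for the crude cardinality bounds that produce the stated error term. The one place the paper is a bit more surgical than your sketch: it treats the two directions asymmetrically — the upper bound is obtained with modulus $Q$ (non-split reduction mod $p$ is necessary for insolubility once Lang--Weil/Hensel are in force), while only the lower bound needs modulus $Q^2$ and the sparsity polynomial $g$ from \cite[Thm.~2.8]{LS16}; the key mod-$p^2$ cardinality comparison producing the $\pm\alpha/p^2$ discrepancy is then quoted directly from \cite[Prop.~2.3]{BL17} rather than rederived from Lang--Weil on codimension-$\geq 2$ loci as you suggest, though your heuristic is what lies behind that reference.
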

\begin{proof}
	Let $p > A$ be a prime.
	Enlarging $A$ if necessary, the Lang--Weil estimates and Hensel's lemma show that
	if 	$\pi^{-1}(x \bmod p)$ is split then  $\pi^{-1}(x)$ has a
	$\QQ_p$-point. 
	Thus the sum in the proposition
	 is
	\[
	\leq 
	 \#\{ x \in \PP^n(\QQ):H(x) \leq B, \pi^{-1}(x \bmod p) \mbox{ is non-split} \, \forall p\mid Q\}
	.\]
	Applying Proposition~\ref{prop:sieve} 
	with 
	$\Upsilon=\{x \in \PP^n(\ZZ/Q\ZZ):\pi^{-1}(x \bmod p) \mbox{ is non-split} \, \forall p\mid Q\}$,
	we infer that the above cardinality equals
	\[c_n B^{n+1}\prod_{p\mid Q}
	\frac{\#\big\{x \in \P^n(\F_{p}): \pi^{-1}(x) \mbox{ is non-split}\big\}}{\#\PP^n(\F_{p})} 
		+O\left(
		Q
	\#\Upsilon
	\left( B+\frac{B^n}{Q^n} (\log B)^{[1/n]}
	\right)
		\right).
	\]
	One has $\#\Upsilon=\#\PP^n(\ZZ/Q\ZZ) \prod_{p\mid Q} \sigma_p$, thus
	Lemmas~\ref{lem:techn} and~\ref{lem:harmonic} imply that
	$\#\Upsilon
	\ll (2d)^{\omega(Q)}	Q^{n-1}$, 
	which is satisfactory for the upper bound in the proposition.
		
	For the lower bound, we apply the sparsity result of \cite[Thm.~2.8]{LS16}.
	This gives a square-free homogeneous polynomial $g \in \ZZ[x_0,\ldots,x_n]$ 
	which is coprime with $f$ and contains the singular locus of $f$ such that,
	enlarging $A$ if necessary, the sum in the proposition is
	$$ 
		\geq  
	\#\left\{x \in \PP^n(\QQ):
		\begin{array}{l}
			H(x) \leq B, f(x) \equiv 0 \bmod p, f(x) \not \equiv 0 \bmod p^2,   \\
			g(x) \not \equiv 0 \bmod p, \pi^{-1}(x \bmod p) \mbox{ is non-split}, \forall p\mid Q 	
		\end{array}	\right\}
	.$$ 
	We now apply Proposition \ref{prop:sieve} with
	\[
	\Upsilon'=\left\{x \in \P^n(\Z/{Q}^2\Z):
		\begin{array}{l}
			f(x) \equiv 0 \bmod p, f(x) \not \equiv 0 \bmod p^2,  \\
			g(x) \not \equiv 0 \bmod p, \pi^{-1}(x \bmod p) \mbox{ is non-split}, \, \forall p \mid Q	
		\end{array}	\right\}
	\]
	to see that the sum in the proposition is 
	\begin{align} \label{eqn:lower_bound}
		\geq  \frac{c_n \#\Upsilon'}{\#\PP^n(\Z/Q^2\Z) }B^{n+1}
		+O \left(Q^2 \#\Upsilon'
	\left( B+\frac{B^n}{Q^{2n}} (\log B)^{[1/n]}
	\right)\right).
	\end{align}
	As $g(x) = 0$ contains the singular locus of $f$, we may apply \cite[Prop.~2.3]{BL17}
	to find that
	\begin{align*}
		& \#\left\{x \in \P^n(\Z/{p^2}\Z):
		\begin{array}{l}
			f(x) \equiv 0 \bmod p, f(x) \not \equiv 0 \bmod p^2,  \\
			g(x) \not \equiv 0 \bmod p, \pi^{-1}(x \bmod p) \mbox{ is non-split}		
		\end{array}	\right\} \\
		& = \#\left\{x \in \P^n(\FF_{p}):
				f(x) = 0, g(x) \neq  0, \pi^{-1}(x) \mbox{ is non-split}		
			\right\} (p^n + O(p^{n-1})) \\
		& = \#\{x \in \P^n(\F_{p}): \pi^{-1}(x) \mbox{ is non-split}\}p^{n} + O(p^{2n-2}).
	\end{align*}
	Here the last line follows from the fact that if $\pi^{-1}(x)$  is non-split
	then necessarily $f(x) = 0$ by Lemma \ref{lem:f}, together with the fact that
	$\#\{ x \in \PP^n(\FF_p): f(x) = g(x) = 0 \} \ll p^{n-2}$ as $f$ and $g$ share
	no common factor. Recalling that $\#\PP^n(\Z/p^2\Z) = p^n \#\PP^n(\FF_p)$
	\eqref{eq:chronic_infection_pestilence}, 
	shows that 
	\begin{equation} \label{eqn:alpha}
	\prod_{p\mid Q} (\sigma_p - \alpha/p^{2}) \leq \frac{\#\Upsilon'}{\#\PP^n(\Z/Q^2\Z) } \leq 
	\prod_{p\mid Q} (\sigma_p + \alpha/p^{2})
	\end{equation}
	for some $\alpha > 0$. This yields the correct main term for the lower bound.
	For the error term, enlarging $A$ if necessary we have $\alpha/p^2 < d/p$ for all $p > A$.
	Thus \eqref{eqn:alpha} and Lemmas~\ref{lem:techn} and \ref{lem:harmonic} give
	$$\#\Upsilon' \leq \#\PP^n(\Z/Q^2\Z)  \prod_{p\mid Q} (\sigma_p + \alpha/p^{2})
	\leq (2^{\omega(Q)}Q^{2n}) \cdot (2d)^{\omega(Q)}/Q,$$
	which yields the required error term in \eqref{eqn:lower_bound}.
\end{proof}

We now fix the choice of $A$.
Note that as $\min \{p : p \mid Q\} \to \infty$,
the lower bound in Proposition \ref{prop:arcturus}
converges to the upper bound.

\begin{lemma}
\label{lem:pnt}
We have
$$ \sum_{p \leq B} \#\{ x \in \PP^n(\FF_p) : \pi^{-1}(x) \mbox{ is non-split}\} 
= 
\Delta(\pi) \frac{B^{n}}{\log(B^n)} + O\l(\frac{B^{n}}{(\log B)^2}\r).$$ 

\end{lemma}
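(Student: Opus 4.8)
The plan is to reduce the statement to a per-prime asymptotic and then conclude by partial summation against the Prime Number Theorem. The first and main step is to establish that for all sufficiently large primes $p$,
\[
\#\{x \in \PP^n(\FF_p) : \pi^{-1}(x) \text{ is non-split}\} = \Delta(\pi)\,p^{\,n-1} + O\big(p^{\,n-3/2}\big).
\]
As in the proof of Lemma~\ref{lem:harmonic}, a non-split fibre forces $x$ to lie on $\{f \equiv 0 \bmod p\}$. For $p$ large, after spreading the model out over $\ZZ[1/N]$, the reduction of the divisor $f = 0 \subset \PP^n_\QQ$ is the union of the reductions $\overline{\{D\}}_{\FF_p}$ of the \emph{finitely many} codimension-$1$ points $D$ of $\PP^n_\QQ$ lying on the non-smooth locus of $\pi$ --- finitely many, because a fibre over a codimension-$1$ point in the smooth locus of $\pi$ is smooth, hence geometrically integral (the number of geometric connected components being locally constant for a smooth proper morphism, and the generic fibre being geometrically integral), hence split, so that $\delta_D(\pi) = 1$ there --- together with a closed subset of dimension $\le n - 2$. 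Points on this last set, or on a pairwise intersection $\overline{\{D\}}_{\FF_p} \cap \overline{\{D'\}}_{\FF_p}$, number only $O(p^{\,n-2})$.

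It then remains to count, on a dense open ``good'' locus of each $\overline{\{D\}}_{\FF_p}$, the points $x$ with residue field $\FF_p$ whose fibre is non-split. For $p$ large the Galois action on the multiplicity-$1$ components of the fibre over the generic point of $\overline{\{D\}}_{\FF_p}$ is the reduction of the one over $D$, so it factors through the group $\Gamma_D$ of Definition~\ref{def:Delta}, and the fibre over such an $x$ is non-split exactly when the Frobenius at $x$ --- in the connected finite étale $\Gamma_D$-cover of the good locus trivialising this action --- fixes no multiplicity-$1$ component. The proportion of elements of $\Gamma_D$ with that property is precisely $1 - \delta_D(\pi)$, so a Chebotarev count, i.e.\ the Lang--Weil estimates \cite{LW} applied componentwise to this cover over the $(n-1)$-dimensional $\overline{\{D\}}_{\FF_p}$, gives $(1 - \delta_D(\pi))\,p^{\,n-1} + O(p^{\,n-3/2})$ such points. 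Summing over the finitely many relevant $D$, and recalling $1 - \delta_D(\pi) = 0$ otherwise, yields the claimed per-prime asymptotic; alternatively, this count is implicit in the analysis of \cite{LS16}.

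Given this per-prime asymptotic, summing over $p \le B$ completes the proof: the finitely many excluded small primes contribute $O(1)$, the error terms contribute $\sum_{p \le B} O(p^{\,n-3/2}) \ll B^{\,n-1/2} \ll B^n/(\log B)^2$, and partial summation from the Prime Number Theorem gives $\sum_{p \le B} p^{\,n-1} = \tfrac{B^n}{n\log B} + O\big(B^n/(\log B)^2\big) = \tfrac{B^n}{\log(B^n)} + O\big(B^n/(\log B)^2\big)$; adding these yields the lemma. I expect the per-prime asymptotic to be the only real obstacle --- specifically, making the identification of non-splitness with a Frobenius condition uniform in $p$ via spreading out --- after which the count with square-root-in-$p$ error is immediate from Lang--Weil and the partial summation is entirely routine.
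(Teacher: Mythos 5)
Your overall scaffolding (decompose into the finitely many bad divisors $D$, identify non-splitness with a Frobenius condition, count, sum over $p$ by partial summation against the PNT) is the right shape, but the central claim — the per-prime asymptotic
\[
\#\{x \in \PP^n(\FF_p) : \pi^{-1}(x) \text{ is non-split}\} = \Delta(\pi)\,p^{\,n-1} + O\big(p^{\,n-3/2}\big)
\]
— is false in general, and the reason is exactly at the spot where you invoke ``Lang--Weil applied componentwise to this cover.'' Equidistribution of Frobenius classes among $\FF_p$-points of an $(n-1)$-dimensional variety captures only the \emph{geometric} monodromy group of the $\Gamma_D$-cover, i.e.\ the image of $\pi_1$ after base change to $\overline{\FF_p}$. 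But $\Gamma_D$ is a quotient of $\Gal(\overline{\kappa(D)}/\kappa(D))$ where $\kappa(D)$ is a finitely generated field over $\QQ$, and $\Gamma_D$ may have a nontrivial ``constant'' (arithmetic) quotient corresponding to a finite extension of the number field of constants in $\kappa(D)$. For a fixed prime $p$, the Frobenii at $\FF_p$-points of $\overline{\{D\}}_{\FF_p}$ all lie in a single coset of the geometric monodromy subgroup, and which coset that is depends on $p$. Hence the count is $c_p\,p^{\,n-1} + O(p^{\,n-3/2})$ with a $p$-dependent constant $c_p$, and $c_p$ is \emph{not} equal to $1-\delta_D(\pi)$: that value only emerges after averaging the cosets, which requires varying $p$.

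A concrete counterexample: let $D$ be a $\QQ$-rational line in $\PP^2$ over which the fibre is $\{u^2 + w^2 = 0\}$, a pair of $\QQ(i)$-conjugate lines, independent of the point on $D$. Then $\delta_D(\pi) = 1/2$, but for a fixed $p$ the fibre over \emph{every} $\FF_p$-point of $D$ is non-split when $p \equiv 3 \bmod 4$ (contributing $\approx p^{\,n-1}$) and split when $p \equiv 1 \bmod 4$ (contributing $\approx 0$): the count oscillates between $\approx 0$ and $\approx p^{\,n-1}$ and is never $\tfrac12 p^{\,n-1} + O(p^{\,n-3/2})$. The situation is even worse for $n=1$, where $\overline{\{D\}}_{\FF_p}$ is a single point and there is no room for any Lang--Weil equidistribution at all.

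The fix is to sum over $p \le B$ \emph{before} extracting the density: the arithmetic part of $\Gamma_D$ is averaged by an (effective) Chebotarev density theorem over the relevant number field, while the geometric part is averaged by Lang--Weil at each fixed $p$, and it is the combination that produces $1 - \delta_D(\pi)$. This two-variable Chebotarev argument is precisely what is carried out in \cite[Prop.~3.10]{LS16}, and the paper's proof of Lemma~\ref{lem:pnt} is exactly that: cite \cite[Prop.~3.10]{LS16} and upgrade the qualitative Chebotarev statement there to the effective form of \cite[Thm.~9.11]{Ser12} to obtain the error term $O(B^n/(\log B)^2)$. Your partial-summation endgame and your observation that the contribution from points on intersections of divisors is $O(p^{\,n-2})$ are both fine; it is the per-prime identification of the constant with $\Delta(\pi)$ that needs to be replaced by the averaged Chebotarev count.
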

\begin{proof}
	This follows from an easy modification of the proof of \cite[Prop.~3.10]{LS16}.
	(\emph{Loc.~cit.} states an asymptotic formula without an  error term; one
	obtains an error term via the version of the Chebotarev density theorem
	given in~\cite[Thm.~9.11]{Ser12}.)
\end{proof}

We will also require the following.

\begin{proposition} \label{prop:abel}
There exists a constant $\beta_\pi$ such that
\[
\sum_{p\leq B}
\sigma_p
=\Delta(\pi) (\log \log B)+
\beta_\pi
+O((\log B)^{-1})
\]
\end{proposition}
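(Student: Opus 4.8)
The plan is to combine the prime-level count from Lemma~\ref{lem:pnt} with the explicit formula $\sigma_p = \#\{x \in \PP^n(\FF_p) : \pi^{-1}(x) \text{ non-split}\}/\#\PP^n(\FF_p)$ and a summation-by-parts argument. First I would write $\sigma_p = a_p / \#\PP^n(\FF_p)$ where $a_p := \#\{x \in \PP^n(\FF_p) : \pi^{-1}(x) \text{ non-split}\}$, and note $\#\PP^n(\FF_p) = p^n + p^{n-1} + \cdots + 1 = p^n(1 + O(1/p))$. Hence $\sigma_p = a_p/p^n + O(a_p/p^{n+1})$. By Lemma~\ref{lem:harmonic} we have $a_p \ll p^{n-1}$ for $p > A$, so the error contributes $\sum_p a_p/p^{n+1} \ll \sum_p 1/p^2 = O(1)$, which is absorbed into the constant $\beta_\pi$ (together with the contribution of the finitely many primes $p \leq A$, where $\sigma_p$ is defined but we do not control it precisely — those finitely many terms are just a constant). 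So it suffices to show $\sum_{A < p \leq B} a_p/p^n = \Delta(\pi)\log\log B + (\text{const}) + O((\log B)^{-1})$.

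For this I would apply Abel summation (partial summation) to the sequence $a_p$ against the weight $1/p^n$, using Lemma~\ref{lem:pnt} as the input on the partial sums $S(t) := \sum_{p \leq t} a_p = \Delta(\pi)\, t^n/\log(t^n) + O(t^n/(\log t)^2) = \Delta(\pi)\, t^n/(n\log t) + O(t^n/(\log t)^2)$. Writing
\[
\sum_{A < p \leq B} \frac{a_p}{p^n} = \frac{S(B)}{B^n} - \frac{S(A)}{A^n} + n\int_A^B \frac{S(t)}{t^{n+1}}\, \mathrm{d}t,
\]
the boundary term at $B$ is $\Delta(\pi)/(n\log B) + O((\log B)^{-2}) = O((\log B)^{-1})$ and the term at $A$ is a constant. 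For the integral, substituting the asymptotic for $S(t)$ gives
\[
n \int_A^B \frac{\Delta(\pi)\, t^n/(n\log t) + O(t^n/(\log t)^2)}{t^{n+1}}\, \mathrm{d}t = \Delta(\pi)\int_A^B \frac{\mathrm{d}t}{t\log t} + O\!\left(\int_A^B \frac{\mathrm{d}t}{t(\log t)^2}\right).
\]
The first integral is $\Delta(\pi)(\log\log B - \log\log A) = \Delta(\pi)\log\log B + (\text{const})$; the second integral converges as $B \to \infty$, its value being $(\log\log A)/ (\cdots)$ wait — more precisely $\int_A^B \mathrm{d}t/(t(\log t)^2) = 1/\log A - 1/\log B = O(1) - O((\log B)^{-1})$, contributing a constant plus an $O((\log B)^{-1})$ term. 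Collecting everything, $\sum_{p \leq B}\sigma_p = \Delta(\pi)\log\log B + \beta_\pi + O((\log B)^{-1})$ for a suitable constant $\beta_\pi$.

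The main technical point to be careful about is the precise error term in the partial-summation input: Lemma~\ref{lem:pnt} gives $S(B) = \Delta(\pi) B^n/\log(B^n) + O(B^n/(\log B)^2)$, and one must check that plugging the error term $O(t^n/(\log t)^2)$ into the integral $n\int_A^B S(t)\, t^{-n-1}\,\mathrm{d}t$ really does produce only $O((\log B)^{-1})$ and not something larger like $O(\log\log B/\log B)$ — but since $\int_A^B \mathrm{d}t/(t(\log t)^2)$ converges, this is fine. A secondary bookkeeping point is that $\sigma_p$ is defined for all primes $p$, but Lemmas~\ref{lem:harmonic} and~\ref{lem:pnt} only give clean statements for $p > A$; the discrepancy is a finite sum $\sum_{p \leq A}\sigma_p = O(1)$, which simply shifts the constant $\beta_\pi$. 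I do not expect any genuine obstacle here — this is a routine Chebotarev-plus-partial-summation argument, with the only care needed being the tracking of error terms to the stated $O((\log B)^{-1})$ precision.
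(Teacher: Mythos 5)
Your proof takes essentially the same route as the paper: partial summation against $a_p/p^n$ using Lemma~\ref{lem:pnt}, followed by conversion from $a_p/p^n$ to $\sigma_p$ via $1/\#\PP^n(\FF_p) = p^{-n}\bigl(1+O(1/p)\bigr)$. One bookkeeping point to tighten in a final writeup: when you reach $O\bigl(\int_A^B \frac{\mathrm{d}t}{t(\log t)^2}\bigr)$ and declare it contributes ``a constant plus $O((\log B)^{-1})$'', you cannot literally extract a constant from an $O$-bound; the clean way (which the paper follows) is to carry the signed error $R(t) := S(t) - \Delta(\pi)\,t^n/\log(t^n)$, observe that $\int_A^\infty |R(t)|\,t^{-n-1}\,\mathrm{d}t$ converges, and split $\int_A^B = \int_A^\infty - \int_B^\infty$, with the tail $\ll \int_B^\infty \frac{\mathrm{d}t}{t(\log t)^2} \ll (\log B)^{-1}$. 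The same remark applies to absorbing $\sum_{p\leq B} O(a_p/p^{n+1})$ into $\beta_\pi$: you implicitly need the tail bound $\sum_{p>B} a_p/p^{n+1} \ll 1/B$, which is of course well within the stated error.
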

\begin{proof}
The proof is a simple application of Lemma \ref{lem:pnt} and partial summation.
First, 
let  $a_p:=\#\{ x \in \PP^n(\FF_p) : \pi^{-1}(x) \mbox{ is non-split}\}$,
define $S(B):=\sum_{2<p\leq x}a_p$ and let 
\[R(B):=S(B) 
-
\Delta(\pi) \frac{B^{n}}{\log(B^n)}. 
\]  
Lemma~\ref{lem:pnt}  is equivalent to the estimate 
$R(B)\ll   \frac{B^{n}}{(\log B)^2}.$ By partial summation we obtain
\[
\sum_{2<p \leq B} \frac{a_p}{p^n}
=\frac{S(B)}{B^n}+
n\int_2^t \frac{\l(\Delta(\pi) \frac{u^n}{ \log (u^n)} +R(u) \r)}{u^{n+1}} \mathrm{d}u
.\] Lemma~\ref{lem:pnt} directly gives $S(B)/B^n\ll 1/\log B$.
We furthermore have 
$
\int_2^\infty 
\frac{|R(u)|}{u^{n+1}} \mathrm{d}u
<\infty$
due to $|R(B)|\ll   \frac{B^{n}}{(\log B)^2}.$ Hence, we may write 
\begin{align*}
\int_2^t \frac{R(u)}{u^{n+1}} \mathrm{d}u 
=&\int_2^\infty \frac{R(u)}{u^{n+1}} \mathrm{d}u
+O\l(\int_t^\infty  \frac{|R(u)|}{u^{n+1}}  \mathrm{d}u\r)
\\=&
\int_2^\infty \frac{R(u)}{u^{n+1}} \mathrm{d}u
+O\l(\int_t^\infty  \frac{1}{u (\log u)^2} \mathrm{d}u\r)
\\=&
\int_2^\infty \frac{R(u)}{u^{n+1}} \mathrm{d}u
+O\l(\frac{1}{\log t}\r)
.\end{align*}
Recalling  that $\int_2^t (u  \log u)^{-1}\mathrm{d}t=\log \log t-\log \log 2$ and letting 
 $$\gamma_\pi:=
-\Delta(\pi) (\log\log 2)
+\frac{a_2}{2^n}+
\int_2^\infty \frac{R(u)}{u^{n+1}} \mathrm{d}u
$$
 we have proved 
\begin{equation}
\label{eq:fugue546}
\sum_{p\leq B} \frac{a_p}{p^n}=
\Delta(\pi)\log \log B
+\gamma_p
+O\l(\frac{1}{\log B}\r)
.\end{equation}
Using this in the simple form  $\sum_{p\leq B} \frac{a_p}{p^n} \ll \log \log B$
then by partial summation we obtain  
\[
\sum_{p\leq B} \frac{a_p}{p^{n+1}}
\ll
\frac{\log \log B}{B}
+\int_1^B\frac{\log \log u}{u^2}
\mathrm{d}u
\ll 1.\]
We thus obtain that $\sum_p \frac{a_p}{p^{n+1}}$ converges and that the tail is at most 
\begin{equation}
\label{eq:fugue5467}
  \sum_{p>B}  \frac{a_p}{p^{n+1}} \ll \frac{\log \log B}{B}+
\int_B^\infty
\frac{\log \log u}{u^2}
\mathrm{d}u
\ll \frac{\log \log B}{B}
.\end{equation}
Let us now define the function $\epsilon_p$ for primes $p$ via the equation
\[
\frac{1}{\#\PP^n(\FF_p)}
=\frac{1}{p^n}
+
\frac{\epsilon_p}{p^n}
.\] Recalling~\eqref{def:sigma_p} and making use of~\eqref{eq:fugue546}, 
we see that this gives 
\[
\sum_{p\leq B}\sigma_p=
\sum_{p\leq B}\frac{a_p}{p^n}
+
\sum_{p\leq B}\frac{a_p \epsilon_p}{p^n}
=
\Delta(\pi)\log \log B
+\gamma_p
+O\l(\frac{1}{\log B}\r)
+
\sum_{p\leq B}\frac{a_p \epsilon_p}{p^n}
.\] At this point we use~\eqref{eq:chronic_infection_pestilence} to obtain  
$$
\frac{1}{\#\PP^n(\FF_p)}
=\frac{p-1}{p^{n+1}-1}
=\frac{1}{p^n} \frac{1}{(1+\frac{1}{p}+\cdots+\frac{1}{p^n})}
=\frac{1}{p^n} \l(1+O\l(\frac{1}{p}\r)\r)
,$$ from which we get  
 $\epsilon_p\ll 1/p$.
By~\eqref{eq:fugue5467}
we  
see that $\sum_{p } a_p\epsilon_p p^{-n}$ converges and that  
\[  
\sum_{p\leq B}  \frac{a_p\epsilon_p}{p^{n}} 
=
\sum_{p }  \frac{a_p\epsilon_p}{p^{n}} 
+ O\l(
\frac{\log \log B}{B}
\r)
.\]
Taking $\beta_\pi:=\gamma_\pi+\sum_p  a_p\epsilon_p p^{-n} $
concludes the proof. 
\end{proof}


\subsection{Moments of a truncated version of $\omega_\pi$.}
\label{s:approxim} 
We assume from now on that $\Delta(\pi) > 0$.
In what follows
$t_0,t_1:\R\to \R$ 
are two functions 
that  satisfy
\[
1<t_0(B)<t_1(B)<B,
\lim_{B\to\infty} t_0(B)=
\lim_{B\to\infty} t_1(B)=
\infty
.\] 
Both functions $t_0(B),t_1(B)$
will be chosen optimally at a later stage.
Define the function  
\begin{equation} 
\label{def:truncation}  \omega_\pi^\flat(x,B) := \sum_{t_0(B)<p \leq t_1(B)} 
(\theta_p(x)-\sigma_p),\end{equation} where $\sigma_p$ is as in \eqref{def:sigma_p}. 
We  need to estimate asymptotically the moments of 
$\omega_\pi(x)$;
it turns out that 
it is easier to work with 
the `truncated' version $\omega_\pi^\flat(x,B)$ of $\omega_\pi(x)$.
Introducing $t_1(B)$
deals with the dependence on $Q$ in the error term of Proposition~\ref{prop:arcturus},
whilst $t_0(B)$
is used to control the error $\pm \alpha /p^{2}$
in the leading constant in Proposition~\ref{prop:arcturus}.

To study the degree to which 
 $\omega_\pi(x)$ is affected by the primes in the interval $(t_0(B),t_1(B)]$
we begin by observing that Proposition \ref{prop:abel}
provides us with
\begin{equation}
\label{eq:nbound} 
\sum_{p\leq t_0(B)}\sigma_p\ll 
\log \log t_0(B), \qquad
\sum_{t_1(B)<p\leq B}\sigma_p\ll 
\Big(\log \frac{\log B}{\ \log t_1(B)}\Big)+\frac{1}{\log t_1(B)}   
.\end{equation}
We define
\begin{equation} \label{def:truncated_moment}
\c{M}^\flat_r(\pi, B):=\sum_{\substack{ x \in \PP^n(\QQ), H(x)\leq B\\ \pi^{-1}(x) \text{ smooth}}} 
\l(\frac{\omega_\pi^\flat(x,B) }{\sqrt{\Delta(\pi)\log \log B}}\r)^{r}
, (r \in \Z_{\geq 0}).
\end{equation}
Note that 
$\c{M}^\flat_r(\pi, B)$
depends on $t_0(B)$ and $t_1(B)$ due to the presence of 
$\omega_\pi^\flat(x,B)$. 
The estimates in Proposition \ref{prop:abel} and \eqref{eq:nbound} yield \begin{equation} \label{eq:varcheb} 
\sum_{t_0(B)<p\leq t_1(B)} \sigma_p =\Delta(\pi) \log \log B 
+O\Big(\Big(\log \frac{\log B}{\ \log t_1(B)}\Big)  + \log \log t_0(B)\Big) .\end{equation} 
Furthermore, we have  $\sigma_p-\sigma_p^2=\sigma_p+O(p^{-2})$ due to Lemma \ref{lem:harmonic}. 
This shows that \begin{equation} \label{eq:varcheb2} \sum_{t_0(B)<p\leq t_1(B)} (\sigma_p-\sigma_p^2) =
\Delta(\pi) \log \log B +O\Big( \Big(\log \frac{\log B}{\ \log t_1(B)}\Big)  + \log \log t_0(B) \Big) .\end{equation}

\begin{lemma}
\label{lem:compareotpr}
Let $\psi \in \{ -1,1\}$.
In the situation of Proposition~\ref{prop:arcturus} we have 
\[
\prod_{p \mid Q}
(\sigma_{p} 
+\psi 
\alpha/p^2)
=
\prod_{p \mid Q}
\sigma_{p} 
+ O\left(
\frac{(2\alpha d)^{\omega(Q)}}{Q \min\{p: p\mid Q\}}
	\right)
	.\]
\end{lemma}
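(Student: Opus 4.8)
The plan is to expand the product over $p\mid Q$ and estimate the difference from $\prod_{p\mid Q}\sigma_p$ summand by summand, feeding in the bound $\sigma_p\leq d/p$ from Lemma~\ref{lem:harmonic}. Before starting I would make three harmless reductions. Since enlarging $\alpha$ only weakens \eqref{eqn:alpha} and enlarging $A$ is always permitted, I may assume $\alpha\geq 1$ and $A>\alpha$; since $\Delta(\pi)>0$ forces $\pi$ to have non-split fibres, $f$ is non-constant, so $d=\deg f\geq 1$; and the case $Q=1$ is vacuous (both products equal $1$), so I may assume $\omega(Q)\geq 1$. In particular every prime $p\mid Q$ satisfies $p>A>\alpha$, hence $\alpha/p<1$.

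Next, because $Q$ is square-free, multiplying out the factors $\sigma_p+\psi\alpha/p^2$ and isolating the purely-$\sigma$ term gives
\[
\prod_{p\mid Q}\Bigl(\sigma_p+\frac{\psi\alpha}{p^2}\Bigr)-\prod_{p\mid Q}\sigma_p
=\sum_{\substack{T\mid Q\\ T>1}}\Bigl(\prod_{p\mid T}\frac{\psi\alpha}{p^2}\Bigr)\prod_{\substack{p\mid Q\\ p\nmid T}}\sigma_p,
\]
the sum running over square-free divisors $T>1$ of $Q$. Taking absolute values, using $|\psi|=1$, $\sigma_p\leq d/p$, and the identity $\bigl(\prod_{p\mid T}p\bigr)\prod_{p\mid Q,\,p\nmid T}p=Q$, each summand is bounded by
\[
\prod_{p\mid T}\frac{\alpha}{p^2}\prod_{\substack{p\mid Q\\ p\nmid T}}\frac{d}{p}
=\frac{d^{\omega(Q)-\omega(T)}}{Q}\prod_{p\mid T}\frac{\alpha}{p}.
\]
Writing $m=\min\{p:p\mid Q\}$ and pulling out the least prime factor $q\geq m$ of $T$, and using $\alpha/p<1$ for the remaining prime factors of $T$, one gets $\prod_{p\mid T}\alpha/p\leq \alpha/q\leq \alpha/m$; since $d\geq 1$, $\alpha\geq 1$ and $\omega(Q)\geq 1$, this makes each summand at most $(\alpha d)^{\omega(Q)}/(Qm)$.

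Finally, there are fewer than $2^{\omega(Q)}$ divisors $T>1$ of $Q$, so the total is at most $2^{\omega(Q)}(\alpha d)^{\omega(Q)}/(Qm)=(2\alpha d)^{\omega(Q)}/(Q\min\{p:p\mid Q\})$, which is the assertion (with implied constant $1$). I do not expect any genuine obstacle: the lemma is pure bookkeeping, and the only points needing a little attention are verifying that $\alpha$ and $A$ may be enlarged without affecting Proposition~\ref{prop:arcturus}, and organising the estimates so that the exponential base comes out as $2\alpha d$ rather than something larger.
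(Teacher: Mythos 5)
Your proof is correct and follows essentially the same route as the paper's: expand $\prod_{p\mid Q}(\sigma_p+\psi\alpha/p^2)$ into a sum over square-free divisors, bound each term using $\sigma_p\leq d/p$ and the relation $\prod_{p\mid Q}p=Q$, extract a factor of $1/\min\{p:p\mid Q\}$ from the at-least-one prime appearing squared, and count the $<2^{\omega(Q)}$ summands. The only cosmetic difference is that you make the reductions $\alpha\geq1$, $d\geq1$, $A>\alpha$ explicit (the paper uses them silently in the step bounding $d^{|S|}\alpha^{k-|S|}\leq(\alpha d)^k$), which is a welcome clarification but not a new idea.
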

\begin{proof}
We prove the inequality for one choice of sign,
namely $\psi=1$, 
the other choice being similar.
Denoting
$Q=p_1\cdots p_k$ with $p_i<p_{i+1}$, 
we have by Lemma~\ref{lem:harmonic} 
that 
\begin{align*} 
& \Big(\prod_{p|Q} (\sigma_p+\alpha/p^2)\Big)-\Big(\prod_{p|Q}\sigma_p\Big)
=\sum_{S\subsetneq \{1,\ldots,k\}} \Big(\prod_{i\in S}\sigma_{p_i}\Big)\Big(\prod_{i\notin S}\frac{\alpha}{p_i^2}\Big)
\\
&\leq \sum_{S\subsetneq \{1,\ldots,k\}} 
\Big(\prod_{i\in S}\frac{d}{p_i}\Big)\Big( \prod_{i\notin S}\frac{\alpha}{p_i^2}\Big)
\leq (\alpha d)^k\sum_{S\subsetneq \{1,\ldots,k\}} 
\Big(\prod_{i\in S}\frac{1}{p_i}\Big)\Big( \prod_{i\notin S}\frac{1}{p_i^2}\Big) \\
& = \frac{(\alpha d)^k}{Q}\sum_{S\subsetneq \{1,\ldots,k\}} 
\Big( \prod_{i\notin S}\frac{1}{p_i}\Big) \leq  \frac{(2\alpha d)^k}{Qp_1} \qedhere
.\end{align*}
\end{proof} 
Before proceeding we recall~\cite[Prop.~3]{MR2290492}.

\begin{lemma}[Granville--Soundararajan]
\label{lem:generalerdoskac}
Let $\c{P}$ be a finite set of primes and let $\c{A}:=\{a_1,\ldots,a_y\}$ be a multiset of $y$ natural numbers. For $Q\in \N$ define
\[\c{A}_Q:=\#\{m\leq y:Q \mid a_m\}.\]
Let $h$ be a real-valued, non-negative multiplicative function  such that 
for square-free $Q$ we have $0\leq h(Q)\leq Q$. For any $r\in \N$ we let 
\[ C_r=\Gamma(r+1)/(2^{r/2}\Gamma(1+r/2))
\
\text{ and } 
\
\c{E}_\c{P}(\c{A},h,r):=\sum_{\substack{Q\in \N, \mu(Q)^2=1 \\ p\mid Q\Rightarrow p\in \c{P}\\ \omega(Q)\leq r }} \bigg| \c{A}_Q-\frac{h(Q)}{Q} y\bigg|
.\]
Defining 
\[\mu_\c{P}(h):=\sum_{p\in \c{P}} \frac{h(p)}{p}
\
\text{ and } 
\
\sigma_\c{P}(h):=\bigg(\sum_{p\in \c{P}} \frac{h(p)}{p}\Big(1-\frac{h(p)}{p}\Big)\bigg)^{1/2},\]
we have uniformly for all $r\leq \sigma_\c{P}(h)^{2/3}$ that 
\begin{equation}\label{eq:evenmoments}\sum_{a\in \c{A}} \big(\#\{ p\in \c{P}:p\mid a\}-\mu_\c{P}(h)\big)^r=C_r y \sigma_\c{P}(h)^r 
+O(
C_r y \sigma_\c{P}(h)^{r-2} r^3
+
\mu_\c{P}(h)^r \c{E}_\c{P}(\c{A},h,r))
\end{equation}
if $r$ is even, and 
\begin{equation}\label{eq:oddmoments}
\sum_{a\in \c{A}} \big(\#\{ p\in \c{P}:p\mid a\}-\mu_\c{P}(h)\big)^r\ll
C_r y \sigma_\c{P}(h)^{r-1} r^{3/2}+
\mu_\c{P}(h)^r \c{E}_\c{P}(\c{A},h,r)
\end{equation}
if $r$ is odd.
\end{lemma}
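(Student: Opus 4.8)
This is a theorem of Granville and Soundararajan \cite{MR2290492}; we outline the line of attack one would take. The plan is to expand the $r$-th moment as a sum over $r$-tuples of primes of $\mathcal{P}$ and to separate an ``independence model'' main term from an error governed by the discrepancies $\mathcal{A}_Q - (h(Q)/Q)y$. For $a\in\mathcal{A}$ and $p\in\mathcal{P}$ put $X_p(a):=\mathbf{1}[p\mid a]-h(p)/p$; since $\mu_{\mathcal{P}}(h)=\sum_{p\in\mathcal{P}}h(p)/p$ one has the exact identity $\#\{p\in\mathcal{P}:p\mid a\}-\mu_{\mathcal{P}}(h)=\sum_{p\in\mathcal{P}}X_p(a)$, and hence
\[
\sum_{a\in\mathcal{A}}\Big(\#\{p\in\mathcal{P}:p\mid a\}-\mu_{\mathcal{P}}(h)\Big)^r=\sum_{(p_1,\dots,p_r)\in\mathcal{P}^r}\ \sum_{a\in\mathcal{A}}\prod_{i=1}^r X_{p_i}(a).
\]
First I would group the tuples $(p_1,\dots,p_r)$ by the set of distinct primes $q_1,\dots,q_k$ occurring, with multiplicities $m_1,\dots,m_k$ (so $\sum_j m_j=r$). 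Writing $Q:=q_1\cdots q_k$, which is square-free with $\omega(Q)=k\leq r$, and using $\mathbf{1}[p\mid a]^2=\mathbf{1}[p\mid a]$ and the multiplicativity of $h$, the inner sum expands into a combination of the quantities $\mathcal{A}_D$, $D\mid Q$, with coefficients of absolute value $\leq 1$. Substituting $\mathcal{A}_D=(h(D)/D)y$ defines the independence model; collecting the resulting errors $\sum_{D\mid Q}|\mathcal{A}_D-(h(D)/D)y|$ over all admissible $Q$ gives the term $\mu_{\mathcal{P}}(h)^r\mathcal{E}_{\mathcal{P}}(\mathcal{A},h,r)$ of the statement, the factor $\mu_{\mathcal{P}}(h)^r$ bounding the number of tuples sharing a given $Q$.

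Under the independence model, a tuple contributes nothing unless every $m_j\geq 2$, because the model-average of $X_p$ over $\mathcal{A}$ vanishes; a tuple with all $m_j\geq 2$ contributes $y\prod_j g_{m_j}(q_j)$, where $g_2(p)=(h(p)/p)(1-h(p)/p)$ and $|g_m(p)|\leq h(p)/p$ for all $m$. For even $r$ the main term comes from tuples in which each prime occurs exactly twice: there are $C_r=(r-1)!!$ pairings of the $r$ coordinates, and each pairing contributes $y\sum_{q_1,\dots,q_{r/2}\ \mathrm{distinct}}\prod_j g_2(q_j)=y\,\sigma_{\mathcal{P}}(h)^r+(\text{lower order})$, since $\sum_{q\in\mathcal{P}}g_2(q)=\sigma_{\mathcal{P}}(h)^2$; this produces the claimed $C_r y\,\sigma_{\mathcal{P}}(h)^r$. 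For odd $r$ no perfect pairing exists, so some $m_j\geq 3$, which costs a factor $\sigma_{\mathcal{P}}(h)$ and yields the bound \eqref{eq:oddmoments}.

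The hard part will be the uniform-in-$r$ bookkeeping of the residual terms, namely the correction for the $q_j$ being distinct and the contribution of partitions with a block of size $\geq 3$. I would bound these using the elementary inequalities $\sum_{p\in\mathcal{P}}(h(p)/p)^j\leq\mu_{\mathcal{P}}(h)$ for $j\geq 1$ and $\sum_{p\in\mathcal{P}}g_2(p)=\sigma_{\mathcal{P}}(h)^2$, together with a count of set partitions of $\{1,\dots,r\}$ by block sizes; the key point is that under the hypothesis $r\leq\sigma_{\mathcal{P}}(h)^{2/3}$ the combinatorial losses (bounded powers of $r$ against a factor $\sigma_{\mathcal{P}}(h)^{-2}$ saved per oversized block) stay strictly below the main term, and the cube $r^3$ in \eqref{eq:evenmoments} records precisely this trade-off. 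In the applications of this paper $h$ will be a multiplicative function with $h(p)/p\ll 1/p$, so that $\mu_{\mathcal{P}}(h)\asymp\sigma_{\mathcal{P}}(h)^2$ and both are of order $\log\log$, which makes the estimate comfortable.
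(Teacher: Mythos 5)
The paper does not prove this statement; it imports it verbatim as \cite[Prop.~3]{MR2290492} with no argument given, so there is no ``paper's proof'' to compare against. Your sketch is a fair summary of the Granville--Soundararajan argument: expand the $r$-th power as a sum over tuples of primes, split off an independence model where $\mathcal{A}_D$ is replaced by $(h(D)/D)y$, and observe that under the model a tuple contributes only if every multiplicity is at least $2$, so for even $r$ the perfect pairings (counted by $C_r = r!/(2^{r/2}(r/2)!)$) dominate and give $C_r y\,\sigma_{\mathcal{P}}(h)^r$. Two small imprecisions worth noting. When you expand $X_q(a)^m$ the coefficients of the $\mathcal{A}_D$'s are bounded in absolute value by $2$, not by $1$ (since $X_q(a)^m = (-h(q)/q)^m + \mathbf{1}[q\mid a]\bigl[(1-h(q)/q)^m - (-h(q)/q)^m\bigr]$); that is harmless but the claim ``coefficients of absolute value $\leq 1$'' is not literally right. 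Second, the sentence ``the factor $\mu_{\mathcal{P}}(h)^r$ bounding the number of tuples sharing a given $Q$'' is a shorthand that doesn't quite describe what happens: in Granville--Soundararajan the total weight attached to the error terms over all tuples is bounded by $\mu_\mathcal{P}(h)^r$, which is why that factor multiplies $\mathcal{E}_\mathcal{P}(\mathcal{A},h,r)$; it is not a count of tuples. The remainder of the sketch, including the role of the constraint $r \leq \sigma_{\mathcal{P}}(h)^{2/3}$ in controlling the partitions with oversized blocks, is accurate at the level of a sketch.
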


We apply this result to study the moments of $ \omega_\pi^\flat(x,B)$. 

\begin{proposition}
\label{prop:applicationofgranvsound}
Fix a positive integer $r$ and let 
$t_0(B)$ and $t_1(B)$ be given by
\begin{equation}
\label{dem:growth234}
t_0(B):=(\log \log B)^{2r} \ \text{ and } \  t_1(B)=B^{\frac{1}{5r(n+1)}}
.\end{equation}
Then we have 
\begin{numcases}{\frac{\c{M}^\flat_r(\pi, B)}{c_n B^{n+1}}=}
\mu_r +O_{r}
\Bigg( \frac{\log \log \log \log B}{\log \log B}\Bigg),&
 \textrm{ if $r$ is even}, \label{eq:evenmo}\\  
 O_{r}
\Bigg(\frac{1 }{   (\log \log B)^{\frac{1}{2}}}\Bigg),  & \textrm{ if $r$ is odd}. \label{eq:oddmo}
 \end{numcases} 
\end{proposition}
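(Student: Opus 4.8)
The plan is to apply Lemma~\ref{lem:generalerdoskac} (Granville--Soundararajan) with $\c{P}$ the set of primes in $(t_0(B),t_1(B)]$ and with the multiset $\c{A}$ given by $\{f(\x) : \x \text{ a primitive representative of } x \in \PP^n(\QQ), H(x)\leq B, \pi^{-1}(x)\text{ smooth}\}$, or a slight variant designed so that divisibility of a member of $\c{A}$ by a prime $p\in\c{P}$ records the event $\theta_p(x)=1$ (i.e.\ $\pi^{-1}(x)(\QQ_p)=\emptyset$). The point is that for $p>A$, by Lemma~\ref{lem:f}, $\theta_p(x)=1$ forces $p\mid f(x)$, so the relevant counting function is controlled. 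We then take the multiplicative function $h$ to be determined on primes by $h(p)/p = \sigma_p$, so that $\mu_{\c{P}}(h)=\sum_{t_0(B)<p\leq t_1(B)}\sigma_p$ and $\sigma_{\c{P}}(h)^2 = \sum_{t_0(B)<p\leq t_1(B)}(\sigma_p-\sigma_p^2)$; by \eqref{eq:varcheb} and \eqref{eq:varcheb2} both of these equal $\Delta(\pi)\log\log B + O(\log\log\log B + \log\log t_0(B))$, which with the choice \eqref{dem:growth234} is $\Delta(\pi)\log\log B + O_r(\log\log\log\log B)$.

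The key step is to verify the hypotheses of Lemma~\ref{lem:generalerdoskac} and to bound the error term $\c{E}_{\c{P}}(\c{A},h,r)$. For this one needs that for square-free $Q$ with all prime factors in $\c{P}$ and $\omega(Q)\leq r$, the count $\c{A}_Q$ of members of $\c{A}$ divisible by $Q$ (i.e.\ with $\theta_p(x)=1$ for all $p\mid Q$) satisfies $|\c{A}_Q - (h(Q)/Q)y| = |\c{A}_Q - (\prod_{p\mid Q}\sigma_p)y|$ small on average, where $y=\#\{x\in\PP^n(\QQ):H(x)\leq B,\pi^{-1}(x)\text{ smooth}\} = c_n B^{n+1}+o(B^{n+1})$. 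This is exactly what Proposition~\ref{prop:arcturus} delivers: it sandwiches $\sum_{x}\prod_{p\mid Q}\theta_p(x)$ between $c_n(\prod_p(\sigma_p\pm\alpha/p^2))B^{n+1}$ plus an error $O((4d)^{\omega(Q)}(Q^{2n+1}B + QB^n(\log B)^{[1/n]}))$, and Lemma~\ref{lem:compareotpr} replaces the distorted main term $\prod_p(\sigma_p\pm\alpha/p^2)$ by $\prod_p\sigma_p$ at the cost of $O((2\alpha d)^{\omega(Q)}/(Q\min\{p:p\mid Q\}))$, which is where the lower cutoff $t_0(B)=(\log\log B)^{2r}$ enters: since $\min\{p:p\mid Q\}>t_0(B)$, this contribution to $\c{E}_{\c{P}}$ is $\ll y \cdot (\text{something})/t_0(B)$, summing over $Q$ with $\omega(Q)\leq r$ to $O_r(y/t_0(B))\cdot(\text{polylog})$, hence negligible compared to $y/\log\log B$ once $t_0(B)$ is a large enough power of $\log\log B$. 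The upper cutoff $t_1(B)=B^{1/(5r(n+1))}$ ensures $Q\leq t_1(B)^r = B^{1/(5(n+1))}$, so $Q^{2n+1}B \ll B^{(2n+1)/(5(n+1))+1} = o(B^{n+1})$ and likewise $QB^n(\log B)^{[1/n]}=o(B^{n+1})$; summing the $(4d)^{\omega(Q)}$ factor over the $O_r(t_1(B)^{\text{something}})$ relevant $Q$ still leaves these error terms $o(B^{n+1})$, in fact $\ll_r B^{n+1}/(\log\log B)^{A}$ for any $A$.

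Having established $\mu_{\c{P}}(h)=\Delta(\pi)\log\log B+O_r(\log\log\log\log B)$, $\sigma_{\c{P}}(h)^2 = \Delta(\pi)\log\log B + O_r(\log\log\log\log B)$, and $\mu_{\c{P}}(h)^r\,\c{E}_{\c{P}}(\c{A},h,r) = o(B^{n+1})$ with room to spare, I plug into \eqref{eq:evenmoments}--\eqref{eq:oddmoments}. For $r$ even the main term $C_r y\,\sigma_{\c{P}}(h)^r$ divided by $c_n B^{n+1}$ gives $C_r (\sigma_{\c{P}}(h)^2/(\Delta(\pi)\log\log B))^{r/2} = C_r(1+O_r(\log\log\log\log B/\log\log B))^{r/2} = \mu_r + O_r(\log\log\log\log B/\log\log B)$ (noting $C_r=\mu_r$), while the secondary terms $C_r y\sigma_{\c{P}}(h)^{r-2}r^3$ contribute $O_r((\log\log B)^{(r-2)/2}/(\log\log B)^{r/2}) = O_r(1/\log\log B)$ after normalisation. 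For $r$ odd, $C_r y\sigma_{\c{P}}(h)^{r-1}r^{3/2}$ normalised gives $O_r((\log\log B)^{(r-1)/2}/(\log\log B)^{r/2}) = O_r((\log\log B)^{-1/2})$. One also uses $\c{M}^\flat_r/(c_n B^{n+1})$ versus $\sum_x(\#\{p\in\c{P}:\theta_p(x)=1\}-\mu_{\c{P}}(h))^r/(c_n B^{n+1}(\Delta(\pi)\log\log B)^{r/2})$, noting $\omega_\pi^\flat(x,B) = \#\{p\in\c{P}:\theta_p(x)=1\} - \sum_{p\in\c{P}}\sigma_p = \#\{p\in\c{P}:\theta_p(x)=1\} - \mu_{\c{P}}(h)$ exactly, so no extra error is introduced beyond a trivial $|\{x:\pi^{-1}(x)\text{ not smooth}\}|$ correction of size $O(B^n)$. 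The main obstacle is the bookkeeping in the error analysis of $\c{E}_{\c{P}}(\c{A},h,r)$: one must check that the exponential-in-$\omega(Q)$ factors $(4d)^{\omega(Q)}$ and $(2\alpha d)^{\omega(Q)}$, summed over the (many) square-free $Q$ supported on $\c{P}$ with $\omega(Q)\leq r$, do not overwhelm the savings from the $B$-power and $t_0(B)$ denominators; since $r$ is fixed this is a finite sum with at most $O_r((\pi(t_1(B)))^r)$ terms and the powers of $B$ provide ample slack, but it requires care to phrase cleanly. A second, minor technical point is ensuring the condition $r\leq \sigma_{\c{P}}(h)^{2/3}$ of Lemma~\ref{lem:generalerdoskac} holds, which it does for $B$ large since $\sigma_{\c{P}}(h)^{2/3}\sim(\Delta(\pi)\log\log B)^{1/3}\to\infty$.
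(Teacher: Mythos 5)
Your proposal follows the paper's proof essentially step for step: apply Granville--Soundararajan (Lemma~\ref{lem:generalerdoskac}) with $\c{P}$ the primes in $(t_0,t_1]$ and $h(p)/p=\sigma_p$, feed the equidistribution bound of Proposition~\ref{prop:arcturus} together with Lemma~\ref{lem:compareotpr} into the error term $\c{E}_\c{P}$, and use \eqref{eq:varcheb}--\eqref{eq:varcheb2} to estimate $\mu_\c{P}(h)$ and $\sigma_\c{P}(h)^2$; your accounting of the $t_0$ and $t_1$ cutoffs and the normalisation is also the same. The one thing to fix is your initial choice of multiset: with $\c{A}=\{f(\x)\}$ the quantity $\#\{p\in\c{P}:p\mid f(\x)\}$ overcounts (since $p\mid f(\x)$ does not imply $\theta_p(x)=1$), so the moments you would compute are not those of $\omega_\pi^\flat$. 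Your ``slight variant'' is exactly what the paper uses, namely $a_x:=\prod_{\pi^{-1}(x)(\Q_p)=\emptyset}p$, which is a well-defined finite product by Lemma~\ref{lem:bounds} and has the crucial property $Q\mid a_x\Leftrightarrow\theta_p(x)=1$ for all $p\mid Q$; with this choice the rest of your argument goes through as written.
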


 \begin{proof}  
 We apply Lemma~\ref{lem:generalerdoskac} with 
 \[\c{A}:=\Big\{a_x := \,\,\prod_{\mathclap{\substack{p \text{ prime }\\\pi^{-1}(x)(\Q_p)=\emptyset}}} \,\,p:
 x \in \PP^n(\QQ), H(x)\leq B, \pi^{-1}(x) \text{ smooth}\Big\}.\] 
 Lemma~\ref{lem:bounds} ensures that $a_x$ is well-defined. The key property of $a_x$ is that 
for any square-free $Q$ we have 
\[Q\mid a_x \iff (p\mid Q \Rightarrow \pi^{-1}(x)(\Q_p)=\emptyset).\] Therefore, if we let \[\c{P}:=\{p \in (t_0,t_1]:p \text{ prime}\}, h(Q):=Q\prod_{p
\mid Q} \sigma_p,y:=\#\{x \in \PP^n(\QQ): H(x)\leq B, \pi^{-1}(x) \text{ smooth}\},\] then \[\c{A}_Q-\frac{h(Q)}{Q} y=
\Bigg(\sum_{\substack{ x \in \PP^n(\QQ) \\ H(x)\leq B\\ \pi^{-1}(x) \text{ smooth}}}\prod_{p \mid Q}\theta_{p}(x)\Bigg)
-\Big(\prod_{p\mid Q} \sigma_p\Big) y.\] 
Note that $y=c_nB^{n+1}+O(B^n (\log B)^{[1/n]})$; indeed
$$\#\{x \in \PP^n(\QQ): H(x)\leq B, \pi^{-1}(x) \text{ singular}\} \leq 
\#\{x \in \PP^n(\QQ): H(x)\leq B, f(x) = 0\} \ll B^{n}$$
by Lemma \ref{lem:f} and \cite[Thm.~13.4]{Ser97}.
To study $\c{E}_\c{P}(\c{A},h,r)$ we  use this and  Lemma~\ref{lem:harmonic} to show that if $Q$ is square-free and is divided only by primes $p>A$, then 
\[\c{A}_Q-\frac{h(Q)}{Q} y=\Bigg(\sum_{\substack{ x \in \PP^n(\QQ) \\ H(x)\leq B\\ \pi^{-1}(x) \text{ smooth}}}
\prod_{p \mid Q}\theta_{p}(x)\Bigg)-\Big(\prod_{p\mid Q} \sigma_p\Big) c_n B^{n+1}+O\Bigg(
d^{\omega(Q)}
\frac{B^n (\log B)^{[1/n]}}{Q}\Bigg).\] 
We can now employ Proposition~\ref{prop:arcturus} and Lemma~\ref{lem:compareotpr} to see that 
\[\sum_{\substack{ x \in \PP^n(\QQ) \\ H(x)\leq B\\ \pi^{-1}(x) \text{ smooth}}}\prod_{p \mid Q}\theta_{p}(x)-
\Big(\prod_{p\mid Q} \sigma_p\Big) c_n B^{n+1} \ll \frac{B^{n+1}(2\alpha d)^{\omega(Q)}}{Q \min\{p: p\mid Q\}}
+(4d)^{\omega(Q)}( Q^{2n+1} B + Q B^n(\log B)^{[1/n]} ).\]
Noting that $A^{\omega(Q)}/Q\ll Q^{-0.9}\ll Q^{0.9}\ll (4d)^{\omega(Q)}Q$ we deduce that 
\[\Bigg|\c{A}_Q-\frac{h(Q)}{Q} y\Bigg| \ll \frac{B^{n+1}(2\alpha d)^{\omega(Q)}}{Q \min\{p: p\mid Q\}}
+(4d)^{\omega(Q)}( Q^{2n+1} B + Q B^n(\log B)^{[1/n]} ).\] 
For any square-free $Q$ that is divisible by at most $r$ primes, all lying in $(t_0,t_1]$, we have $Q\leq t_1^r$. Therefore, in the notation
of Lemma~\ref{lem:generalerdoskac} we have \[\c{E}_\c{P}(\c{A},h,r)\ll_r B^{n+1} \Bigg(
\sum_{\substack{Q\in \N, \mu(Q)^2=1 \\ p\mid Q\Rightarrow p\in \c{P}\\ \omega(Q)\leq r }}  \frac{1}{Q \min\{p: p\mid Q\}}\Bigg)
+ (t_1^{r(2n+1)} B + t_1^r B^n(\log B)^{[1/n]} ) t_1^r ,\] where we used the estimate  
\[\sum_{\substack{Q\in \N, \mu(Q)^2=1 \\ \omega(Q)\leq r, p\mid Q\Rightarrow p\in \c{P} }} 1
\leq \#\{Q\in \N:Q\leq t_1^r\} =t_1^r.
\] Writing $Q=p_1\cdots p_r$ with $p_i<p_{i+1}$ we have 
\[
\sum_{\substack{Q\in \N, \mu(Q)^2=1 \\ \omega(Q)\leq r, p\mid Q\Rightarrow p\in \c{P} }}  \frac{1}{Q \min\{p: p\mid Q\}}
=\sum_{t_0<p_r\leq t_1}\frac{1}{p_r} \sum_{t_0<p_{r-1}<p_r}\frac{1}{p_{r-1}}\cdots \sum_{t_0<p_{2}<p_3}\frac{1}{p_{2}}
\sum_{t_0<p_{1}<p_2}\frac{1}{p_{1}^2}, \] which can be seen to be $\ll \frac{(\log \log t_1)^{r-1}}{t_0}$ due to $\sum_{p\leq t_1} p^{-1}\ll \log \log 
t_1$ and $\sum_{p> t_0} p^{-2}\ll t_0^{-1}$. Using  
assumption~\eqref{dem:growth234} 
we obtain \begin{align*}\c{E}_\c{P}(\c{A},h,r)
&\ll_{r}
\frac{B^{n+1} (\log \log B)^{r-1} }{t_0} + (t_1^{r(2n+1)} B + t_1^r B^n(\log B)^{[1/n]} ) t_1^r \\
&\ll_{r}
B^{n+1} (\log \log B)^{-r-1}
+B^{n+1/2} \\
&\ll_{r}
B^{n+1} (\log \log B)^
{-r-1}
.\end{align*}
Define  \[ 
\hat{\mu}(B):=\sum_{t_0<p\leq t_1} \sigma_p \ \text{ and } \ \hat{\sigma}(B):=\Big(\sum_{t_0<p\leq t_1} 
(\sigma_p-\sigma_p^2)\Big)^{1/2}.\] Note that by~\eqref{eq:varcheb}-\eqref{eq:varcheb2} and~\eqref{dem:growth234}
we have
 \begin{equation}\label{eq:shouldsp}
\hat{\mu}(B)=\Delta(\pi) \log \log B+O
(\log \log \log \log B).\end{equation}
Furthermore, Lemma~\ref{lem:harmonic} 
shows that $\sum_p \sigma_p^2=O(1)$, hence \[\hat{\sigma}(B)^2=\hat{\mu}(B)+O_{r}
(1).\]
By~\eqref{eq:shouldsp} 
we get 
$\hat{\sigma}(B)=(\Delta(\pi) \log \log B)^{1/2} 
(1+O_r((\log \log \log \log B)/\log \log B))^{1/2}
$, hence using  the estimate $(1+\epsilon)^{1/2}=1+O(\epsilon)$
that is 
valid for all $0<\epsilon <1$, 
we obtain
 \begin{equation}\label{eq:shouldspfgy67}
\hat{\sigma}(B)
=(\Delta(\pi) \log \log B)^{1/2}+O_r\l(\frac{\log \log \log \log B}{(\log \log B)^{1/2}}\r)
.\end{equation}
We therefore see that 
the error term in~\eqref{eq:evenmoments} is 
\[\ll_{r}
B^{n+1} (\log \log B)^{r/2-1} + B^{n+1} (\log \log B)^{-1}
\ll_{r}
B^{n+1} (\log \log B)^{r/2-1} 
.\]
Noting that \[\sum_{a\in \c{A}} \big(\#\{ p\in \c{P}:p\mid a\}-\mu_\c{P}(h)\big)^r=\sum_{\substack{ x \in \PP^n(\QQ), H(x)\leq B\\ 
\pi^{-1}(x) \text{ smooth}}}  \omega_\pi^\flat(x,B)^{r} \]  establishes 
\[\frac{\c{M}^\flat_r(\pi, B)}{c_n B^{n+1}}=\mu_r 
\frac{\hat{\sigma}(B)^r}{(\Delta(\pi)\log \log B)^{r/2}}  
+O_{r}
\Big( \frac{1}{\log \log B}\Big)
.\]   
The proof of~\eqref{eq:evenmo} can now be concluded by using~\eqref{eq:shouldspfgy67} to verify 
\begin{align*}
\frac{\hat{\sigma}(B)^r}{(\Delta(\pi)\log \log B)^{r/2}}  
&=\Bigg(1+O_r
\Bigg(\frac{\log \log \log \log B}{\log \log B}\Bigg)\Bigg)^{r/2} 
\\&=1+O_r
\Bigg(\frac{\log \log \log \log B}{\log \log B}\Bigg)
.\end{align*}
The proof of~\eqref{eq:oddmo}  can be 
performed  
in an entirely analogous manner by using~\eqref{eq:oddmoments}. 
\end{proof} 

\subsection{Proof of Theorem~\ref{thm:moment}}
\label{s:bachbwv997}
We first require the following preparatory lemma.

\begin{lemma}
\label{lem:comparelasbux}
Let $y(B),z(B)$ be two functions satisfying 
\[
z(B)>1,
\lim_{B\to\infty} y(B)= 
\infty
\
\text{ and } 
\
\lim_{B\to\infty} 
\frac{\log y(B)}{\log B}=0
.\]
Let $m\in \Z_{\geq 0}$ and let $F\in \Z[x_0,\ldots,x_n]$
be a primitive homogeneous polynomial. Then 
\[
\sum_{\substack{x\in \P^n(\Q)\\H(x)\leq B\\
F(x)\neq 0
}} \Big(z(B)+\sum_{\substack{p\mid F(x)\\p\leq y(B)}}1\Big)^m
\ll_{F,m}
B^{n+1}(z(B)+\log \log y(B))^m
.\]
\end{lemma}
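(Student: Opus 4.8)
The plan is to expand the $m$-th power via the multinomial theorem and reduce to counting, for each $0 \leq k \leq m$, the sum $\sum_{x} \big(\sum_{p \mid F(x), p \leq y(B)} 1\big)^k$ with the restriction $H(x) \leq B$, $F(x) \neq 0$. Writing $\big(\sum_{p \mid F(x), p \leq y}1\big)^k$ as a sum over ordered $k$-tuples of primes $(p_1, \dots, p_k)$ each at most $y$ and each dividing $F(x)$, and grouping by the squarefree number $Q = \mathrm{lcm}(p_1,\dots,p_k)$, which has $\omega(Q) = j$ for some $j \leq k$, we can bound this by $\sum_{j \leq k} j^{k} \sum_{Q \text{ squarefree}, \omega(Q)=j, P^+(Q)\leq y} \#\{x \in \P^n(\Q) : H(x) \leq B, Q \mid F(x)\}$, where $P^+(Q)$ denotes the largest prime factor of $Q$ (the factor $j^k$ accounts for the number of ordered $k$-tuples of primes with support a given set of $j$ primes). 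Here I use the notation $Q \mid F(x)$ from the paper's \textbf{Notation} paragraph.

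The key input is Proposition \ref{prop:sieve}: taking $\Upsilon = \{x \in \P^n(\Z/Q\Z) : F(x) \equiv 0 \bmod Q\}$, one has $\#\Upsilon \ll_F Q^{n-1} \cdot (\text{something like } C^{\omega(Q)})$ — more precisely, since $F$ has bounded degree, the number of solutions of $F \equiv 0$ in $\P^n(\Z/p\Z)$ is $O(p^{n-1})$ for each prime $p$, so $\#\Upsilon \ll_F (C_F)^{\omega(Q)} Q^n / \min_p\{p \mid Q\}$ type bounds via Lemma \ref{lem:techn} and \eqref{eq:chronic_infection_pestilence}, and hence $\#\Upsilon / \#\P^n(\Z/Q\Z) \ll_F (C_F)^{\omega(Q)}/Q$ for $Q$ coprime to the (finitely many) bad primes. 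Then Proposition \ref{prop:sieve} gives
\[
\#\{x : H(x) \leq B, Q \mid F(x)\} \ll_F \frac{(C_F)^{\omega(Q)}}{Q} B^{n+1} + (C_F)^{\omega(Q)} Q\big(B + (B/Q)^n (\log B)^{[1/n]}\big);
\]
summing the error term over squarefree $Q \leq y(B)^k$ with $\omega(Q) \leq k$ and using $\log y(B)/\log B \to 0$ makes it $\ll_{F,k} B^n (\log B)^{O(1)} y(B)^{O(1)} = o(B^{n+1})$, so the error is absorbed into the main term.

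For the main term, I need $\sum_{Q \text{ squarefree}, \omega(Q) = j, P^+(Q) \leq y} (C_F)^{j}/Q \ll_{F,j} (\log\log y)^j$; this follows because $\sum_{Q \text{ squarefree}, \omega(Q)=j, P^+(Q)\leq y} 1/Q \leq \frac{1}{j!}\big(\sum_{p \leq y} 1/p\big)^j \ll \frac{1}{j!}(\log\log y + O(1))^j$ by Mertens, and the $(C_F)^j$, $j^k$, $j!$ factors are all absorbed into the implied constant $O_{F,m}(\cdot)$ since $j \leq k \leq m$ is bounded. Combining, $\sum_x \big(\sum_{p\mid F(x), p\leq y}1\big)^k \ll_{F,m} B^{n+1}(\log\log y(B) + O(1))^k$ for each $k \leq m$. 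Feeding this back into the multinomial expansion of $(z(B) + \sum 1)^m = \sum_{k=0}^m \binom{m}{k} z(B)^{m-k}(\sum 1)^k$ and summing gives $\ll_{F,m} B^{n+1} \sum_{k} z(B)^{m-k}(\log\log y(B)+O(1))^k \ll_{F,m} B^{n+1}(z(B) + \log\log y(B))^m$, using $z(B) > 1$ to absorb the $O(1)$'s. The main obstacle is bookkeeping the various $\omega(Q)$-dependent constants and verifying that the error term from Proposition \ref{prop:sieve}, after summation over the relevant range of $Q$, genuinely contributes $o(B^{n+1})$ — this is where the hypothesis $\log y(B)/\log B \to 0$ is essential, and one must be slightly careful that $Q$ can be as large as $y(B)^m$.
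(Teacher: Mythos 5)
Your proposal is correct and follows essentially the same route as the paper's proof: reduce via the binomial theorem to bounding $\sum_x (\sum_{p\mid F(x),\,p\leq y}1)^k$, rewrite this as a sum over tuples of primes, group by the squarefree $Q=\mathrm{lcm}(p_1,\dots,p_k)$, invoke Proposition~\ref{prop:sieve} with $\Upsilon=\{x:F(x)\equiv 0\bmod Q\}$ and the bound $\#\Upsilon\ll_F (C_F)^{\omega(Q)}Q^{n-1}$, sum the main term via Mertens, and kill the error term using $\log y(B)/\log B\to 0$ so that $Q\leq y(B)^k\ll_\epsilon B^\epsilon$. The minor differences (counting tuples as $j^k$ rather than the paper's $r^r$, dividing by $j!$ in the Mertens bound) are cosmetic and affect only the implied constant.
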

\begin{proof}
It suffices to show that   for every $r\in \Z\cap [0,m]$ we have
\begin{equation} \label{eq:sumaimf} 
\sum_{\substack{x\in \P^n(\Q), H(x)\leq B \\ F(x)\neq 0}} \Big(\sum_{\substack{p\mid F(x)\\p\leq y(B)}}1\Big)^r \ll_{F,r} B^{n+1}(\log \log y(B))^r ,
\end{equation} 
as the result will then easily follow from the binomial theorem. We have \[
\sum_{\substack{x\in \P^n(\Q),
H(x)\leq B 
\\ F(x)\neq 0
}} \Big(\sum_{\substack{p\mid F(x)\\p\leq y(B)}}1\Big)^r
\leq 
\sum_{p_1,\ldots,p_r\leq y(B)} 
\sum_{\substack{x\in \P^n(\Q), H(x)\leq B
\\
1\leq i \leq r \Rightarrow p_i  \mid F(x)
}} 1
.\]
Letting $Q$ be the least common multiple of the primes $p_1,\ldots,p_r$ we see that 
$\omega(Q)\leq r$ and $\mu(Q)^2=1$. Furthermore, for every $Q\in \N$ having these two properties 
there are at most $r^r$ vectors $(p_1,\ldots,p_r)$ with every prime $p_i$ satisfying $p_i\leq y(B)$
and with $Q$ being the least common multiple of the  $p_i$. This is because for each $1\leq i \leq r$ the 
prime $p_i$ must divide $Q$, so the number of available $p_i$ is at most $\omega(Q)\leq r$.
This shows that \[
\sum_{p_1,\ldots,p_r\leq y(B)} 
\sum_{\substack{x\in \P^n(\Q), H(x)\leq B
\\
1\leq i \leq r \Rightarrow p_i  \mid F(x)
}} 1\ll_r
\sum_{\substack{Q\in \N\\ 
\omega(Q)\leq r}}
\mu(Q)^2
\sum_{\substack{x\in \P^n(\Q), H(x)\leq B
\\
Q \mid F(x)
\\
p\mid Q \Rightarrow p\leq y(B)
}} 1
.\] Letting 
\[
\Upsilon =
\{x \in \P^n(\Z/Q\Z): F(x)\equiv 0 \bmod Q\}
\]
we may obtain the following via
Lemma~\ref{lem:techn} 
and following 
similar steps as 
in the proof of Lemma \ref{lem:harmonic},
\[
\#\Upsilon=\prod_{p\mid Q}
\#\{x \in \P^n(\F_p): F(x)= 0 \}
\leq 
\prod_{p\mid Q}
( \deg F \cdot \#\P^{n-1}(\F_p) )
\leq
Q^{n-1} (2\deg F)^{\omega(Q)}
\ll_r
Q^{n-1}
.\]
Noting that the assumption 
$\log y(B)=o(\log B)$ shows that $y(B)\ll_\epsilon B^\epsilon$ for every $\epsilon>0$.
Hence, we have $Q\leq y(B)^r\ll_\epsilon B^\epsilon$
and invoking
Proposition~\ref{prop:sieve} with~\eqref{eq:chronic_infection_pestilence}
we 
obtain\[
\sum_{\substack{x\in \P^n(\Q), H(x)\leq B
\\
Q  \mid F(x)
}} 1\ll_{\epsilon,r}
\frac{Q^{n-1}    }{\#\PP^n(\ZZ/Q\ZZ)} 
 B^{n+1}   
	+  Q^n 
	\left( B+\frac{B^n}{Q^{n}} (\log B)^{[1/n]}
	\right)  
	\ll_{\epsilon,r}
\frac{1   }{Q}  B^{n+1} +B^{n+\frac{1}{10}}
,\] thus 
\[
\sum_{\substack{Q\in \N,  \omega(Q)\leq r\\
p\mid Q \Rightarrow p\leq y(B) 
}} \mu(Q)^2
\sum_{\substack{x\in \P^n(\Q), H(x)\leq B
\\
Q  \mid F(x)
}} 1\ll_{\epsilon,r}
B^{n+1}
\l(
\sum_{\substack{Q\in \N,  \omega(Q)\leq r\\
p\mid Q \Rightarrow p\leq y(B) 
}} \frac{\mu(Q)^2}{Q}\r)
+B^{n+\frac{1}{10}}\l(
\sum_{\substack{Q\in \N,  \omega(Q)\leq r\\
p\mid Q \Rightarrow p\leq y(B) 
}} \mu(Q)^2\r)
.\]
The last sum over $Q$ is at most $y(B)^r       \ll_\epsilon      B^\epsilon$, while the previous satisfies 
\[
\sum_{\substack{Q\in \N,  \omega(Q)\leq r\\
p\mid Q \Rightarrow p\leq y(B) 
}} \frac{\mu(Q)^2}{Q}
\leq 
\l(\sum_{p\leq y(B)}\frac{1}{p}\r)^r
\ll_r
(\log \log y(B))^r.\]  
This verifies~\eqref{eq:sumaimf} 
and thus 
concludes the proof.
\end{proof}
We begin the proof of Theorem~\ref{thm:moment}
by noting that 
\begin{equation}
\label{eq:boundnote}
\c{M}_r(\pi, B)=
\sum_{\substack{ x \in \PP^n(\QQ), H(x)\leq B\\ f(x)\neq 0}}
\l(
\frac{
\omega_\pi(x)
-
\Delta(\pi)\log \log B
}
{\sqrt{\Delta(\pi)\log \log B}}
\r)^{r}
+O_r(B^{n}(\log B)^r)
.\end{equation}
This is because by Lemma~\ref{lem:bounds}
we have 
\[
\sum_{\substack{ x \in \PP^n(\QQ), H(x)\leq B\\ \pi^{-1}(x) \text{ smooth} \\ f(x)=0}}
\l(
\frac{
\omega_\pi(x)
-
\Delta(\pi)\log \log B
}
{\sqrt{\Delta(\pi)\log \log B}}
\r)^{r}
\ll_r
\sum_{\substack{ x \in \PP^n(\QQ) \\ H(x)\leq B \\ f(x) = 0}}
(\log B)^r
\ll B^n
(\log B)^r.
\]
We continue 
the
proof of Theorem~\ref{thm:moment}
by
applying  
Proposition~\ref{prop:applicationofgranvsound}.
For every $x$ in the sum on the right side of~\eqref{eq:boundnote},
Lemma~\ref{lem:bounds} shows that
\begin{align*}
&\omega_\pi(x)-\Delta(\pi) \log \log B \\
&=
\omega_\pi^\flat(x,B)
+\sum_{{p\leq t_0(B)}}\theta_p(x)
+\sum_{{t_1(B)<p \ll B^D}}\theta_p(x)
+
\Big(\Big\{ \hspace{-5pt}
\sum_{t_0(B)<p\leq t_1(B)} \hspace{-5pt} \sigma_p
\Big\}
-\Delta(\pi) \log \log B
\Big)
.\end{align*}
Owing to~\eqref{eq:shouldsp} 
the last term is 
$\ll_{\c{C},\epsilon_1} \log \log \log \log B$.
Using Lemma~\ref{lem:f} and the trivial bound $0\leq \theta_p(x)\leq 1$
we see that  
\[
\sum_{p\leq t_0(B)}\theta_p(x)
=
\sum_{p\leq A}\theta_p(x)
+
\sum_{\substack{p\mid f(x) \\ A<p\leq t_0(B)}}\theta_p(x)
\ll
1+
\sum_{\substack{p \mid f(x)\\ p\leq t_0(B)}} 1
.\]
Observe that 
\begin{equation}
\label{eq:obviousboundisobvious}
m \in \Z\setminus \{0\},
z \in \R_{> 1}
\Rightarrow
\#\{p>z: p\mid m\}
\leq \frac{\log |m|}{\log z}
\end{equation}
because 
$z^{\#\{p>z: p\mid m\}}\leq \prod_{p\mid m} p\leq m$. 
Hence, whenever $x\in \P^n(\Q)$ is such that 
$H(x)\leq B$ and $f(x)\neq 0$ 
we deduce by 
Lemma~\ref{lem:f}
that 
\[
\sum_{p>t_1(B)}\theta_p(x)
=
\sum_{\substack{p|f(x)\\p>t_1(B)}}\theta_p(x)
\leq
\sum_{\substack{p|f(x)\\p>t_1(B)}} 1
\leq 
\frac{\log |f(x)|}{\log t_1(B)}
\ll
  \frac{\log B}{\ \log t_1(B)}
\ll_{\epsilon_1} 
1,\]
where we used
the fact that 
$\log f(x)\ll \log H(x)$. 
We are thus led to
the conclusion that
for any $x$ 
on the right side of~\eqref{eq:boundnote}
we have 
\[
\omega_\pi(x)-\Delta(\pi) \log \log B
=\omega_\pi^\flat(x,B)
+E_\pi(x,B)
,\]
for some function $E_\pi(x,B)$ satisfying
\begin{equation} \label{eqn:E}
|E_\pi(x,B)|\ll_{\c{C},\epsilon_1} 
\log \log \log \log B+
\sum_{\substack{p \mid f(x), p\leq t_0(B)}} 1
.\end{equation}
Therefore, we obtain for $r\neq 0$ that 
\[
(\omega_\pi(x)-\Delta(\pi) \log \log B)^r
=\omega_\pi^\flat(x,B)^r
+\sum_{k=0}^{r-1}
{r \choose k}
\omega_\pi^\flat(x,B)^k
E_\pi(x,B)^{r-k}
.\]
This allows the comparison 
with the ``truncated moment'' \eqref{def:truncated_moment}, to find via~\eqref{eq:boundnote}
that
\[
\big|
\c{M}_r(\pi, B)-
\c{M}_r^\flat(\pi, B)
\big|
\ll_{r}
\frac{B^n}{(\log B)^{-r}}
+
\sum_{k=0}^{r-1}
\sum_{\substack{ x \in \PP^n(\QQ), H(x)\leq B\\ f(x)\neq 0 }} 
\frac{
\omega_\pi^\flat(x,B)^k E_\pi(x,B)^{r-k} }
{(\log \log B)^{r/2}}
.\]
By Cauchy's inequality we see that the last 
sum is 
\[
\ll_{r}
\frac{1}{{(\log \log B)^{r/2}}}
\sum_{k=0}^{r-1} 
\Big(
\sum_{\substack{ x \in \PP^n(\QQ),
H(x)\leq B\\
 f(x)\neq 0
}} 
\omega_\pi^\flat(x,B)^{2k}
\Big)^{1/2}
\Big(
\sum_{\substack{ x \in \PP^n(\QQ), H(x)\leq B
\\  f(x)\neq 0
}} 
E_\pi(x,B)^{2(r-k)}
\Big)^{1/2}.
\]
We apply Proposition~\ref{prop:applicationofgranvsound}
with $r=2k$ to obtain
\[
\big| \c{M}_r(\pi, B)- \c{M}_r^\flat(\pi, B) \big|
 \ll_r
\frac{B^n}{(\log B)^{-r}}+
\sum_{k=0}^{r-1}  
\frac{B^{(n+1)/2}}{(\log \log B)^{(r-k)/2}}
\Big(
\sum_{\substack{ x \in \PP^n(\QQ)
\\
H(x)\leq B
\\  f(x)\neq 0
}} 
\hspace{-0,3cm}
E_\pi(x,B)^{2(r-k)}
\Big)^{1/2}
.\]
Recalling \eqref{eqn:E} and applying Lemma~\ref{lem:comparelasbux}
with
\[
m=2(r-k), \
z(B):=\log \log \log   \log  B
\ \text{ and } \ 
y(B):=t_0(B),
\]
we see that, in light of $z(B)+\log \log y(B)\ll_r
 \log \log \log   \log  B 
$,
one has 
\begin{align*}
\frac{ 
\c{M}_r(\pi, B)-
\c{M}_r^\flat(\pi, B) 
}
{B^{n+1}
} 
\ll_r  
\frac{(\log B)^r}{B}+\frac{\log \log \log \log B 
}{(\log \log B)^{1/2}} 
.\end{align*}
We conclude that 
\[
\c{M}_r(\pi, B)=
\c{M}_r^\flat(\pi, B)+ 
O_r
\Big(B^{n+1} \frac{\log \log \log \log B }{(\log \log B)^{1/2}}\Big)
.\]
An application of 
Proposition~\ref{prop:applicationofgranvsound}
completes
the proof of 
Theorem~\ref{thm:moment}.
\qed

\subsection{Proof of Theorem~\ref{thm:gaussian}} \label{s:bachbwv1080} 
\begin{lemma}\label{lem:fasolada} There exists a set  $\c{S} \subset \{x\in \P^n(\Q):H(x)\leq B\}$
with \begin{equation}\label{eq:fasolena}
\#\{x\in \P^n(\Q):H(x)\leq B, x\notin \c{S}\}=O\Big(\frac{B^{n+1}}{(\log \log \log B)^2}\Big)\end{equation}
and such that for all $x\in \c{S}$ we have 
\begin{equation}\label{eq:fasoldio}
\frac{\omega_\pi(x)-\Delta(\pi) \log \log B}{\sqrt{\Delta(\pi) \log \log B}}
=
\frac{\omega_\pi(x)-\Delta(\pi) \log \log H(x)}{\sqrt{\Delta(\pi) \log \log H(x)}}
+O\Big(\frac{1}{\sqrt{\log B}}\Big)
.\end{equation}
\end{lemma}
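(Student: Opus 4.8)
The plan is to construct $\c{S}$ by removing from $\{x\in\P^n(\Q):H(x)\leq B\}$ three explicit exceptional subsets, each of size $O(B^{n+1}/(\log\log\log B)^2)$, and then to check \eqref{eq:fasoldio} pointwise on the complement. Assume $B$ large and write $L:=\log\log B$ and $\Delta:=\Delta(\pi)$. First I would fix the parameters
\[
\delta=\delta(B):=\frac{2\log\log\log\log B}{\log B},\qquad T=T(B):=\log\log\log B,
\]
and declare $\c{S}$ to be the set of $x\in\P^n(\Q)$ with $H(x)\leq B$ such that (i) $\pi^{-1}(x)$ is smooth; (ii) $H(x)>B^{1-\delta}$; and (iii) $|\omega_\pi(x)-\Delta L|\leq T\sqrt{\Delta L}$.

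For \eqref{eq:fasolena} it suffices to bound the three sets excluded by (i)--(iii). The set excluded by (ii) has size $\ll B^{(n+1)(1-\delta)}=B^{n+1}\exp(-(n+1)\delta\log B)\ll B^{n+1}/(\log\log\log B)^2$ by the choice of $\delta$ together with \eqref{def:cnn}. The set excluded by (i) consists of $x$ with $f(x)=0$, hence has size $\ll B^n$ by Lemma~\ref{lem:f} and \cite[Thm.~13.4]{Ser97}, which is negligible. For the set excluded by (iii) I would apply Theorem~\ref{thm:moment} with $r=2$: since $\mu_2=1$ and $\#\{x\in\P^n(\Q):H(x)\leq B\}\ll B^{n+1}$, the second moment $\c{M}_2(\pi,B)=\sum_{x:H(x)\leq B,\ \pi^{-1}(x)\text{ smooth}}\big((\omega_\pi(x)-\Delta L)/\sqrt{\Delta L}\big)^2$ satisfies $\c{M}_2(\pi,B)\ll B^{n+1}$, and Markov's inequality then bounds this set by $\ll B^{n+1}/T^2=B^{n+1}/(\log\log\log B)^2$. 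Summing the three contributions gives \eqref{eq:fasolena}.

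It remains to verify \eqref{eq:fasoldio} for $x\in\c{S}$. Write $\ell:=\log\log H(x)$ and $\omega:=\omega_\pi(x)$. From (ii) one gets $(1-\delta)\log B<\log H(x)\leq\log B$, hence $0\leq L-\ell\leq-\log(1-\delta)\ll\delta$ and, for $B$ large, $\ell\geq\tfrac12 L$. I would then use the identity
\[
\frac{\omega-\Delta L}{\sqrt{\Delta L}}-\frac{\omega-\Delta\ell}{\sqrt{\Delta\ell}}
=\omega\left(\frac{1}{\sqrt{\Delta L}}-\frac{1}{\sqrt{\Delta\ell}}\right)-\sqrt{\Delta}\left(\sqrt{L}-\sqrt{\ell}\right),
\]
whose second term is $\ll|\sqrt L-\sqrt\ell|=(L-\ell)/(\sqrt L+\sqrt\ell)\ll\delta/\sqrt L$, while the factor in its first term is $\ll(L-\ell)/L^{3/2}\ll\delta/L^{3/2}$ (using $\ell\geq\tfrac12 L$); by (iii) we have $\omega\leq\Delta L+T\sqrt{\Delta L}\ll L$ since $T\sqrt{L}=o(L)$, so the first term is $\ll\delta/\sqrt L$ as well. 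Therefore the whole difference is $\ll\delta/\sqrt L\ll\delta=o(1/\sqrt{\log B})$, which is in particular $O(1/\sqrt{\log B})$, as required.

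The only delicate point is the presence of $\omega_\pi(x)$ as a multiplier in the first term above: the trivial bound $\omega_\pi(x)\ll\log H(x)/\log\log H(x)$ of Lemma~\ref{lem:bounds} is far too lossy to conclude, so one is forced to discard the relatively few $x$ at which $\omega_\pi(x)$ is anomalously large — this is exactly what condition (iii) does, and the input making it quantitatively affordable is the second moment estimate contained in Theorem~\ref{thm:moment}. Everything else is routine manipulation of iterated logarithms.
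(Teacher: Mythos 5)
Your proof is correct and follows essentially the same route as the paper: restrict to smooth fibres, cut off the bottom of the height range (you use $H(x)>B^{1-\delta}$, the paper uses $H(x)>B/\log B$), and discard outliers with $|\omega_\pi(x)-\Delta\log\log B|$ large via Chebyshev applied to the $r=2$ case of Theorem~\ref{thm:moment}, then run an elementary iterated-logarithm estimate for the remaining points. The paper's algebraic decomposition keeps $\omega_\pi(x)-\Delta\log\log B$ as a unit and bounds it by the truncation threshold directly, whereas you split off $\omega_\pi(x)$ alone and use $\omega_\pi(x)\ll\log\log B$; these are interchangeable, and both give the stated $O(1/\sqrt{\log B})$.
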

\begin{proof} Denote $\Xi(B)=\log \log \log B$ 
and define 
\[\c{S}:=\Bigg\{x\in \P^n(\Q): \frac{B}{\log B}<H(x)\leq B, \pi^{-1}(x)  \text{ smooth},
\Bigg|\frac{\omega_\pi(x)-\Delta(\pi) \log \log B}{\sqrt{\Delta(\pi) \log \log B}}\Bigg|  \leq \Xi(B)
\Bigg\}.\] 
The cardinality of those 
$x\in \P^n(\Q)$ with $H(x)\leq B, \pi^{-1}(x)$ smooth and 
\[\Bigg|\frac{\omega_\pi(x)-\Delta(\pi) \log \log B}{\sqrt{\Delta(\pi) \log \log B}}\Bigg|>\Xi(B)
\]
is at most 
\[\sum_{\substack{ x \in \PP^n(\QQ), H(x)\leq B\\ \pi^{-1}(x) \text{ smooth}}}
\l(
\frac{
\omega_\pi(x)
-
\Delta(\pi)\log \log B
}
{\Xi(B)\sqrt{\Delta(\pi)\log \log B}}
\r)^{2}
=\frac{c_n B^{n+1}}{\Xi(B)^2}(1+o(1))\ll \frac{B^{n+1}}{\Xi(B)^2},\]
where the case $r=2$ of Theorem~\ref{thm:moment} has been used.
This provides us with
\[\#\c{S}=c_n B^{n+1}+O\Big(\frac{B^{n+1}}{\Xi(B)^2}\Big).\]
Now note that for all $x\in \P^n(\Q)$ with  $B/\log B<H(x)\leq B$ we have 
\[\log \log H(x)= \log \log B+O\Big(\frac{\log \log B}{\log B}\Big)
,\] 
therefore $(\log \log H(x)/\log \log B)^{1/2}=1+O(1/\log B)$. Thus, for each such $x$ we get
\[
\frac{\omega_\pi(x)-\Delta(\pi) \log \log B}{\sqrt{\Delta(\pi) \log \log B}}
= \frac{\omega_\pi(x)-\Delta(\pi) \log \log B}{\sqrt{\Delta(\pi) \log \log H(x)}} +
O\Bigg( \frac{|\omega_\pi(x)-\Delta(\pi) \log \log B|}{\log B\sqrt{\log \log B}} \Bigg)
.\]
We deduce that  if $x\in \c{S}$ then this is 
\[\frac{\omega_\pi(x)-\Delta(\pi) \log \log B}{\sqrt{\Delta(\pi) \log \log H(x)}} + O\bigg( \frac{\Xi(B)}{\log B} \bigg)
=\frac{\omega_\pi(x)-\Delta(\pi) \log \log H(x)}{\sqrt{\Delta(\pi) \log \log H(x)}} + O\bigg( \frac{\sqrt{\log \log B}}{\log B}+\frac{\Xi(B)}{\log B} \bigg),\]
which is sufficient for our lemma.
\end{proof}
We are now in place to prove Theorem~\ref{thm:gaussian}. 
 For $z\in \R$ we 
denote the distribution function of the 
standard normal distribution  
by  \[\Phi(z):=\frac{1}{\sqrt{2\pi}} \int_{-\infty}^z \mathrm{e}^{-\frac{t^2}{\!2}} \mathrm{d}t .\]
Recall the definition of the probability measure $\nu_B$ in~\eqref{eq:puppenkonigEPI} and 
note that the set $\P^n(\Q)$ becomes a probability space once equipped with the measure $\nu_B$. (The measure $\nu_B$ is supported on the rational points of height at most $B$.)
For any $B\in \R_{\geq 3}$ we consider the random variable $ \texttt{X}_B$ defined on   $\P^n(\Q)$   as follows,
\[ \texttt{X}_B(x):=\begin{cases} \frac{\omega_\pi(x)-\Delta(\pi) \log \log B}{\sqrt{\Delta(\pi) \log \log B}}, 
& \pi^{-1}(x) \text{ smooth},
\\  0, & \text{otherwise.} \end{cases}\]
For $r\in \Z_{\geq 0}$ the $r$-th moment of $ \texttt{X}_B$ is by definition equal to 
\[\int_{-\infty}^{+\infty} \texttt{X}_B^r \mathrm{d}\nu_B=
\sum_{\substack{x\in \P^n(\Q)\\\pi^{-1}(x) \text{ smooth}
}} \Big(\frac{\omega_\pi(x)-\Delta(\pi) \log \log B}{\sqrt{\Delta(\pi) \log \log B}}\Big)^r 
\frac{\nu_B(\{x\})}{\#\{x\in \P^n(\Q):H(x)\leq B\}} 
\] and recalling~\eqref{def:momer} 
we see that this coincides with  $ \c{M}_r(\pi,B) / \#\{x\in \P^n(\Q):H(x)\leq B\}$. 
Theorem~\ref{thm:moment} shows that 
\[
\lim_{B\to+\infty} \int_{-\infty}^{+\infty} \texttt{X}_B^r \mathrm{d}\nu_B
\]
exists and is equal to the $r$-th moment of the standard normal distribution. By~\cite[Th. 30.2]{MR0466055} we get that 
$\texttt{X}_B$ converges in law to the standard normal distribution, i.e. for every  $y \in \R$ we have 
\begin{equation}\label{lem:fasoladabitte}  \lim_{B\to+\infty} \nu_B \left( \left\{x \in \PP^n(\Q): \texttt{X}_B  \leq y \right\}\right) =\Phi(y)
.\end{equation}

Next, for every fixed $\epsilon>0,z\in \R$ and  all sufficiently large $B$ we see that the error term in~\eqref{eq:fasoldio}
has modulus at most $\epsilon$, therefore~\eqref{eq:fasoldio} gives
\begin{equation}\label{lem:fasolada1} 
\nu_B \left( \left\{x \in \c{S}: \texttt{X}_B(x)  \leq z-\epsilon \right\}\right) \leq  \nu_B \left(
\left\{x \in \c{S}: \frac{\omega_\pi(x)-\Delta(\pi) \log \log H(x)}{\sqrt{\Delta(\pi) \log \log H(x)}}
 \leq z
\right\}\right)
\end{equation}
and  \begin{equation}\label{lem:fasolada2} 
\nu_B \left( \left\{x \in \c{S}: \texttt{X}_B(x)  \leq z+\epsilon \right\}\right) \geq  \nu_B \left( \left\{x \in \c{S}: \frac{\omega_\pi(x)-\Delta(\pi) \log 
\log H(x)}{\sqrt{\Delta(\pi) \log \log H(x)}}  \leq z\right\}\right)\end{equation}
for all sufficiently large $B$.
Observe that the set $\c{S}$ in Lemma~\ref{lem:fasolada} 
satisfies the following  as $B\to+\infty$ due to~\eqref{eq:fasolena}, 
\begin{equation}\label{lem:fasolada0} 
\nu_B(\c{S})=1+O\Big(\frac{1}{(\log \log \log B)^2}\Big)=1+o(1)
.\end{equation}
In light of~\eqref{lem:fasoladabitte} this means that $ \lim_{B\to+\infty} \nu_B\left(\left\{x \in \c{S}: \texttt{X}_B  \leq y\right\}\right)=\Phi(y),$ 
which, when applied to $y=z-\epsilon$ and $y=z+\epsilon$ and combined with~\eqref{lem:fasolada1} and~\eqref{lem:fasolada2},
yields
\[ 
 \liminf_{B\to+\infty}\nu_B \left(
\left\{x \in \c{S}: \frac{\omega_\pi(x)-\Delta(\pi) \log \log H(x)}{\sqrt{\Delta(\pi) \log \log H(x)}}
 \leq z
\right\}\right)
\geq 
\Phi(z-\epsilon)
\]
and 
\[\limsup_{B\to+\infty}\nu_B \left(
\left\{x \in \c{S}: \frac{\omega_\pi(x)-\Delta(\pi) \log \log H(x)}{\sqrt{\Delta(\pi) \log \log H(x)}}
 \leq z
\right\}\right)
\leq \Phi(z+\epsilon) 
.\]
Letting $\epsilon\to 0$  and 
using the fact that $\Phi$ is continuous
we obtain \[\lim_{B\to+\infty}\nu_B \left(
\left\{x \in \c{S}: \frac{\omega_\pi(x)-\Delta(\pi) \log \log H(x)}{\sqrt{\Delta(\pi) \log \log H(x)}}
 \leq z
\right\}\right)
= \Phi(z) 
,\]
which, by~\eqref{lem:fasolada0} implies that    \[\lim_{B\to+\infty}\nu_B \left( \left\{x \in \P^n(\Q): \frac{\omega_\pi(x)-\Delta(\pi) \log \log H(x)}
{\sqrt{\Delta(\pi) \log \log H(x)}}  \leq z \right\}\right) = \Phi(z)  .\] Since this holds for every fixed $z\in \R$ it gives Theorem~\ref{thm:gaussian} as an 
immediate consequence. \qed

\section{The pseudo-split case}
\label{sec:ps}

\subsection{Proof of Theorem \ref{thm:Delta=0}}
\label{s:existe}
We let $\pi:V \to \PP^n$ be as in Theorem \ref{thm:Delta=0} and choose a model for $\pi$
as in \S \ref{s:equidistribution}.

\subsubsection{Existence of the limit}
We first prove the existence of \eqref{def:jjlimit} using the versions of the sieve of Ekedahl given
in \cite[\S 4.1]{LS16}, \cite[\S 3]{BBL16} and  \cite[\S 3]{Bha14}.
We begin with a strengthening of Lemma \ref{lem:f}. (Here it is crucial that $\Delta(\pi) = 0$.)

\begin{lemma} \label{lem:Z}
	There exists a closed subset $Z \subset \PP_\ZZ^n$ of codimension at least $2$ and a constant $A > 0$
	with the following property. Let $p > A$ be a prime and $x \in \PP^n(\Z_p)$ such that 
	$\pi^{-1}(x)(\Z_p) = \emptyset$. Then $x \bmod p \in Z(\FF_p)$.
\end{lemma}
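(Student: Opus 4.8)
The plan is to carve $Z$ out of the non-smooth locus of $\pi$, using the hypothesis $\Delta(\pi)=0$ to absorb the codimension-one part. First I would set up notation. Let $f$ be as in Lemma~\ref{lem:f}, so $\pi$ is smooth away from $\{f=0\}\subset\PP^n_\QQ$; after enlarging things if necessary I may assume the codimension-$1$ part of $\{f=0\}$ is a union of irreducible divisors $D_1,\dots,D_m$ along which $\pi$ is singular, and I write $S_0$ for (a closed set containing) the codimension-$\geq 2$ part of $\{f=0\}$. Let $\overline{D_i},\overline{S_0}\subset\PP^n_\ZZ$ be Zariski closures; since taking closures over $\ZZ$ preserves codimension, $\overline{S_0}$ has codimension $\geq 2$ in $\PP^n_\ZZ$ and each $\overline{D_i}$ has codimension $1$. (Any divisorial component of $\{f=0\}$ along which $\pi$ happens to be smooth contributes nothing: for $p\gg0$ such an $x\bmod p$ has smooth fibre, hence a $\QQ_p$-point by the Lang--Weil estimates and Hensel's lemma, so $\pi^{-1}(x)(\ZZ_p)\neq\emptyset$ by properness.)

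The heart of the matter is to produce, for each $i$, a proper closed subset $W_i\subsetneq\overline{D_i}$ (hence of codimension $\geq 2$ in $\PP^n_\ZZ$) and a constant $A_i$ such that for $p>A_i$ and $x\in\PP^n(\ZZ_p)$ with $x\bmod p\in(\overline{D_i}\setminus W_i)(\FF_p)$ one has $\pi^{-1}(x)(\ZZ_p)\neq\emptyset$. This is where $\Delta(\pi)=0$ enters: as $\Delta(\pi)=\sum_D(1-\delta_D(\pi))$ is a sum of non-negative terms, $\delta_{D_i}(\pi)=1$, i.e.\ \emph{every} element of $\Gamma_{\eta_{D_i}}$ fixes an irreducible component of $\pi^{-1}(\eta_{D_i})_{\overline{\kappa(D_i)}}$ of multiplicity $1$. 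By a standard spreading-out argument I would let $W_i$ contain: the singular locus of $\overline{D_i}$, the loci $\overline{D_i}\cap\overline{D_j}$ ($j\neq i$) and $\overline{D_i}\cap\overline{S_0}$, the branch locus of the étale cover of $D_i$ trivialising the Galois action on these components, and the locus over which the components of $\pi^{-1}(\eta_{D_i})$ fail to specialise --- multiplicity-preservingly and Frobenius-compatibly --- to the components of the fibre over a closed point; each of these is proper closed in $\overline{D_i}$. Now fix $p>A_i$ and such an $x$, put $P=x\bmod p$, and let $\mathrm{Frob}_P\in\Gamma_{\eta_{D_i}}$ be the associated Frobenius class. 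By pseudo-splitness $\mathrm{Frob}_P$ fixes some multiplicity-$1$ component $C$, which therefore specialises to a geometrically irreducible multiplicity-$1$ component $\overline C$ of $\pi^{-1}(P)_{\FF_p}$; since $\kappa(D_i)$ has characteristic $0$, the fibre is generically smooth along $C$, so enlarging $A_i$, $\pi$ is smooth at the generic points of $\overline C$. By the Lang--Weil estimates there is $Q\in\overline C(\FF_p)$ at which $\pi$ is smooth and which avoids the other components of $\pi^{-1}(P)_{\FF_p}$. Writing $x$ out explicitly, the $\ZZ_p$-scheme $\pi^{-1}(x)=\mathcal V\times_{\PP^n_\ZZ}\Spec\ZZ_p$ is then smooth over $\ZZ_p$ at $Q$ --- this is exactly where multiplicity $1$ is used, since the intersection order of the arc $x$ with $\overline{D_i}$ plays no role --- so $Q$ lifts by Hensel's lemma to a point of $\pi^{-1}(x)(\ZZ_p)$. (Equivalently, one may invoke the valuative criterion \cite[Thm.~2.8]{LS16} together with the pseudo-splitness of $\pi^{-1}(D_i)$; this is the mechanism already underlying the proof of \cite[Thm.~1.3]{LS16}.)

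Finally I would assemble the conclusion. Set $A=\max_i A_i$, enlarged so that Lemmas~\ref{lem:f} and~\ref{lem:harmonic} apply and reductions mod $p$ are well-behaved, and put $Z=\overline{S_0}\cup\bigcup_{i=1}^m W_i$, a closed subset of $\PP^n_\ZZ$ of codimension $\geq 2$. Let $p>A$ and $x\in\PP^n(\ZZ_p)$ with $\pi^{-1}(x)(\ZZ_p)=\emptyset$. By Lemma~\ref{lem:f}, $p\mid f(x)$, so $x\bmod p$ lies on $\overline{S_0}\bmod p\subseteq Z\bmod p$ (in which case we are done) or on $\overline{D_i}\bmod p$ for some $i$; in the latter case either $x\bmod p\in W_i(\FF_p)\subseteq Z(\FF_p)$, or the previous paragraph forces $\pi^{-1}(x)(\ZZ_p)\neq\emptyset$, a contradiction. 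Hence $x\bmod p\in Z(\FF_p)$, as required.

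The main obstacle is the second paragraph, and within it the case where the arc $x$ is tangent to $\overline{D_i}$ to order $\geq 2$: this is precisely why $\Delta(\pi)=0$ is indispensable, since a multiplicity-$1$ component survives in every fibre over a codimension-$1$ point and can be chosen Frobenius-stable, whereas for a component of multiplicity $m>1$ the fibre $\pi^{-1}(x)$ would only be $\ZZ_p$-soluble when $m$ divides the tangency order. A secondary technical point is verifying that the various spreading-out loci genuinely have codimension $\geq 1$ inside each $\overline{D_i}$, so that $Z$ indeed has codimension $\geq 2$ in $\PP^n_\ZZ$.
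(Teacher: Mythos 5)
The paper's own ``proof'' of this lemma is a one-line citation of \cite[Prop.~4.1]{LS16}, which is exactly this statement in slightly greater generality. What you have written is, in effect, a correct reconstruction of the argument underlying that cited proposition, so the mathematical content is the same; the difference is only that the paper outsources the proof entirely. Your skeleton is sound: decompose the bad locus $\{f=0\}$ into divisorial pieces $D_i$ plus a codimension-$\geq 2$ remainder $S_0$; use $\Delta(\pi)=0$ to get $\delta_{D_i}(\pi)=1$, so every Frobenius element fixes a multiplicity-one geometric component; note that along a multiplicity-one component $\pi$ itself is smooth at the generic point (flatness plus a reduced, hence in characteristic zero generically smooth, fibre), and spread this out; then for a good reduction $P$ and a smooth $\FF_p$-point $Q$ of the Frobenius-fixed component (which exists by Lang--Weil), smoothness of $\pi$ at $Q$ is inherited under base change to the arc $x$, whence Hensel lifts $Q$ to $\pi^{-1}(x)(\ZZ_p)$. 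You are also right that this is precisely where multiplicity one is indispensable: a multiplicity-$m$ component would only produce $\ZZ_p$-points when $m$ divides the intersection number of $x$ with $\overline{D_i}$, which is why $\Delta(\pi)>0$ leaves a genuine codimension-one insoluble locus. One small stylistic remark: once you have arranged $\pi$ to be smooth (as a morphism of schemes) at $Q$, the ``tangency order plays no role'' observation is automatic from stability of smoothness under base change, so the emphasis there is slightly redundant. The spreading-out steps defining $W_i$ are stated informally, but they are standard and the loci involved are all visibly proper closed in $\overline{D_i}$, so this is not a gap.
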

\begin{proof}
	This is a special case of \cite[Prop.~4.1]{LS16}.
\end{proof}

\begin{lemma} \label{lem:Bhargava}
Let $A$ be as in Lemma \ref{lem:Z} Then for every $B,M> 1$ 
we have 
\[
\#\left\{x\in \P^n(\Q):
	\begin{array}{l}
		H(x)\leq B, \pi^{-1}(x) \text{ smooth},  \\
		\exists p>M \text{ s.t. } \pi^{-1}(x)(\Q_p)= \emptyset
		\end{array}	\right\}
\ll
\frac{B^{n+1}}{M \log M} +  B^n
,\]
where the implied constant depends at most on $A$ and $\pi$.
\end{lemma}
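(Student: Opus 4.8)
The plan is to combine Lemma~\ref{lem:Z} with an effective Ekedahl-type sieve. By Lemma~\ref{lem:Z} there is a closed subset $Z \subset \PP^n_\ZZ$ of codimension $\geq 2$ and a constant $A$ such that, for primes $p > A$, any $x \in \PP^n(\QQ)$ with $\pi^{-1}(x)$ smooth and $\pi^{-1}(x)(\QQ_p) = \emptyset$ must satisfy $x \bmod p \in Z(\FF_p)$. Hence the set we wish to bound is contained in
\[
\bigcup_{M < p \leq C H(x)^D}\{x \in \PP^n(\QQ) : H(x) \leq B,\ x \bmod p \in Z(\FF_p)\},
\]
where the restriction $p \ll H(x)^D$ comes from Lemma~\ref{lem:bounds} (the largest bad prime is $\ll H(x)^D \ll B^D$); this truncation is what makes the sieve sum finite. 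So it suffices to estimate
\[
\sum_{M < p \leq C B^D} \#\{x \in \PP^n(\QQ) : H(x) \leq B,\ x \bmod p \in Z(\FF_p)\}.
\]

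First I would control each term by combining Proposition~\ref{prop:sieve} with the codimension bound $\#Z(\FF_p) \ll p^{n-1}$ (which holds since $Z$ has codimension $\geq 2$ in $\PP^n_\ZZ$, so its fibres over $\FF_p$ have dimension $\leq n-2$; one can see this by projecting to a hyperplane as in the proof of Lemma~\ref{lem:harmonic}). Applying Proposition~\ref{prop:sieve} with $Q = p$ and $\Upsilon = Z(\FF_p)$ gives
\[
\#\{x \in \PP^n(\QQ) : H(x) \leq B,\ x \bmod p \in Z(\FF_p)\} \ll \frac{B^{n+1}}{p^2} + p \cdot p^{n-1}\Big(B + \frac{B^n}{p^n}(\log B)^{[1/n]}\Big) \ll \frac{B^{n+1}}{p^2} + p^n B + B^n (\log B)^{[1/n]}.
\]
The main term summed over $M < p \leq CB^D$ contributes $\ll B^{n+1}\sum_{p > M} p^{-2} \ll B^{n+1}/(M\log M)$ by the prime number theorem (or Mertens-type estimates), which is the first term in the claimed bound. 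The difficulty — and the reason a naive application of Proposition~\ref{prop:sieve} term by term is hopeless — is that the error terms $p^n B$ summed over all primes up to $B^D$ would blow up catastrophically. This is exactly the obstacle the ``effective Ekedahl sieve'' of Bhargava \cite{Bha14} (and the variants in \cite[\S4.1]{LS16}, \cite[\S3]{BBL16}) is designed to overcome: rather than summing trivially, one groups the large primes and uses that a point $x \in \PP^n(\QQ) \cap [-B,B]^{n+1}$ primitive has $f_Z(x) \neq 0$ (for $f_Z$ a polynomial vanishing on $Z$) of size $\ll B^{\deg f_Z}$, so it can be divisible by only $O(1)$ primes exceeding $B^{\varepsilon}$; for the range $M < p \leq B^{\varepsilon}$ one sums Proposition~\ref{prop:sieve} directly and checks the error is $\ll B^n$, while for $p > B^{\varepsilon}$ one invokes Bhargava's tail estimate for points lying on a codimension-$\geq 2$ subvariety modulo a large prime, which gives a contribution $\ll B^n$ (this is where codimension $\geq 2$, rather than $\geq 1$, is essential — in codimension $1$ one only gets $B^{n+1}/\log B$, not $B^n$).

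Concretely, I would split the sum at $p = B^{\varepsilon}$ for a small fixed $\varepsilon > 0$. For $M < p \leq B^{\varepsilon}$, Proposition~\ref{prop:sieve} with $Q = p$ gives total contribution $\ll B^{n+1}/(M \log M) + \sum_{p \leq B^\varepsilon}(p^n B + B^n(\log B)^{[1/n]}) \ll B^{n+1}/(M\log M) + B^{\varepsilon(n+1)}\cdot B + B^{n+\varepsilon}(\log B)^{[1/n]}$, which is $\ll B^{n+1}/(M\log M) + B^n$ for $\varepsilon$ small enough. For $p > B^{\varepsilon}$, I would apply the effective large-sieve-type bound of \cite[\S3]{Bha14} for the number of $x$ with $H(x)\leq B$ lying on $Z \bmod p$ for some $p$ in a dyadic range $[P, 2P]$ with $P > B^\varepsilon$; summing over the $O(\log B)$ dyadic ranges up to $B^D$ yields $\ll B^n$. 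Adding the two ranges gives the bound stated in the lemma. The main obstacle is thus purely the bookkeeping of error terms in the large-prime regime, which is precisely the content of the cited effective Ekedahl sieve; the rest is a direct application of Proposition~\ref{prop:sieve} and Lemmas~\ref{lem:bounds} and~\ref{lem:Z}.
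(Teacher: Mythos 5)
The paper's proof of this lemma is a one-liner: combine Lemma~\ref{lem:Z} with Bhargava's effective Ekedahl sieve \cite[Thm.~3.3]{Bha14} applied directly to the codimension-$\geq 2$ subvariety $Z$. Bhargava's theorem already handles all primes $p > M$ simultaneously; there is no need to split the range or to apply Proposition~\ref{prop:sieve} to individual primes. Your reduction step via Lemma~\ref{lem:Z} matches the paper, and your recognition that the large-prime regime requires Bhargava's geometric argument is correct, but the attempt to reconstruct the small-prime regime by summing Proposition~\ref{prop:sieve} over $M < p \leq B^\varepsilon$ contains a genuine error.

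Specifically, the claim that
\[
\frac{B^{n+1}}{M\log M} + B^{\varepsilon(n+1)}B + B^{n+\varepsilon}(\log B)^{[1/n]} \ll \frac{B^{n+1}}{M\log M} + B^n
\quad\text{``for $\varepsilon$ small enough''}
\]
is false: $B^{n+\varepsilon}$ is never $\ll B^n$ for any fixed $\varepsilon > 0$. Moreover, this is not just a slip that can be patched by using the sharper bound $\#Z(\FF_p) \ll p^{n-2}$ (instead of your stated $p^{n-1}$). Doing so improves the error from Proposition~\ref{prop:sieve} to $p^{n-1}B + B^n(\log B)^{[1/n]}/p$, but summing the second piece over $M < p \leq B^\varepsilon$ still yields $\gg B^n\log\log B$, which exceeds the target $B^n$. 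This illustrates that the per-prime application of Proposition~\ref{prop:sieve} provably cannot reproduce the clean $B^n$ error of Bhargava's theorem: Bhargava's argument does not iterate a single-modulus congruence count, but rather exploits the geometry of numbers across moduli simultaneously. The honest path is therefore the one the paper takes — treat Bhargava's Theorem~3.3 as a black box giving the entire bound $B^{n+1}/(M\log M) + B^n$ for all primes $> M$ at once, with Lemma~\ref{lem:Z} supplying the hypothesis that the bad set is contained in a codimension-$\geq 2$ subscheme.
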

\begin{proof}
	This follows immediately from Lemma \ref{lem:Z} and Bhargava's effective version of the Ekedahl sieve \cite[Thm.~3.3]{Bha14}.
\end{proof}
We now prove a strengthening of Proposition \ref{prop:arcturus} in the case $\Delta(\pi) = 0$. The crucial point about the next proposition is that it gives an asymptotic formula for a counting problem which has local conditions imposed at \emph{every} place $v$, whereas Proposition \ref{prop:arcturus} only imposes conditions at \emph{finitely} many primes. In what follows we use the measures $\vartheta_v$ from \S \ref{sec:mu_p}.

\begin{proposition} \label{prop:Ekedahl} 
	Let $S$ be a finite set of places of $\Q$. Then
	$$
	\lim_{B \to \infty} \frac{
	\#\left\{x\in \P^n(\Q):
	\begin{array}{l}
		H(x)\leq B, \pi^{-1}(x) \text{ smooth},  \\
		v \in S
		\!\Leftrightarrow \! 
		\pi^{-1}(x)(\Q_v)= \emptyset 
		\end{array}	\right\} }
		{\#\{x\in \P^n(\Q):H(x)\leq B\}}
	=   \begin{array}{l}
		\prod_{v \in S} \vartheta_v(\PP^n(\QQ_v) \setminus \pi(V(\QQ_v)) \\
		\quad \times \, \prod_{v \notin S}\vartheta_v(\pi(V(\QQ_v)),
		\end{array}
	$$
	where the right hand side is a convergent Euler product.
	
	Moreover, let $Q \in \NN$ be square-free. Then there exists $K_{0}>0$
	such that 
	\begin{equation} \label{eq:leb} 
	\hspace{-6pt}
	\#\left\{x\in \P^n(\Q):
	\hspace{-4pt}
	\begin{array}{l}
		H(x)\leq B, \pi^{-1}(x) \text{ smooth},\\
		 \pi^{-1}(x)(\Q_p)= \emptyset \, \forall \,p \mid Q
		\end{array} \hspace{-5pt}	\right\} 
	     \ll
	K_0^{\omega(Q)}
	\left(
	\frac{B^{n+1}}{Q^{2}}  + 
	BQ^{n-1} + \frac{B^n \log B}{Q} \right).
	\end{equation}
\end{proposition}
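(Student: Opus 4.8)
The plan is to treat the two assertions separately. The uniform bound \eqref{eq:leb} is short: I would first factor $Q = Q_1 Q_2$ with $Q_1 = \gcd(Q,\prod_{p\leq A}p)$ bounded (depending only on $A$) and every prime dividing $Q_2$ exceeding $A$. Imposing the conditions at the primes dividing $Q_1$ only shrinks the count, so it suffices to bound the count with $\pi^{-1}(x)(\QQ_p)=\emptyset$ imposed only for $p\mid Q_2$. For such $p$, Lemma \ref{lem:Z} (combined with the valuative criterion of properness, which identifies $\pi^{-1}(x)(\QQ_p)$ with $\pi^{-1}(x)(\ZZ_p)$ for a primitive integral representative of $x$) forces $x\bmod p\in Z(\FF_p)$; hence the left-hand side of \eqref{eq:leb} is at most $\#\{x\in\PP^n(\QQ): H(x)\leq B,\ x\bmod p\in Z(\FF_p)\ \forall\, p\mid Q_2\}$. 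I would then apply Proposition \ref{prop:sieve} with $\Upsilon$ the reduction of $Z$ modulo $Q_2$; since $\operatorname{codim}Z\geq 2$ gives $\#Z(\FF_p)\ll p^{n-2}$ for $p>A$, the Chinese remainder theorem yields $\#\Upsilon\ll K_0^{\omega(Q_2)}Q_2^{n-2}$, and feeding this together with the lower bound $\#\PP^n(\ZZ/Q_2\ZZ)\geq Q_2^n$ from Lemma \ref{lem:techn} into Proposition \ref{prop:sieve} produces a bound of the shape $K_0^{\omega(Q_2)}\bigl(B^{n+1}/Q_2^2 + BQ_2^{n-1} + B^n(\log B)/Q_2\bigr)$, which gives \eqref{eq:leb} after absorbing the bounded factor $Q/Q_2$.

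For the asymptotic formula I would fix $\varepsilon>0$, pick $M>A$ with $S\subseteq\{v:v\leq M\}\cup\{\infty\}$, and split off the contribution of large primes: by Lemma \ref{lem:Bhargava} the number of smooth $x$ with $H(x)\leq B$ having $\pi^{-1}(x)(\QQ_p)=\emptyset$ for some prime $p>M$ is $\ll B^{n+1}/(M\log M)+B^n$, so the counting function in the statement agrees, up to this error, with the one in which the local conditions are imposed only at the places $v\leq M$ and $v=\infty$. The smoothness condition costs a further $O(B^n)$ by Lemma \ref{lem:f} and \cite[Thm.~13.4]{Ser97}. It then remains to compute the truncated count by equidistribution.

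Here the point is that each $\pi(V(\QQ_v))$ is closed (being the continuous image of the compact set $V(\QQ_v)$) and, by the implicit function theorem applied over the smooth locus of $\pi$, has topological boundary contained in the $\vartheta_v$-negligible non-smooth locus; hence each relevant local set is ``Jordan measurable''. Sandwiching each between finite unions of residue disks for $v\leq M$ (resp.\ between regions with negligible boundary for $v=\infty$) of total measure within $\varepsilon$, applying Proposition \ref{prop:sieve} together with the standard joint equidistribution of rational points of bounded height with respect to $\vartheta_\infty\times\prod_{p\leq M}\vartheta_p$, and letting $\varepsilon\to 0$, shows that the truncated count is asymptotic to $c_n B^{n+1}$ times the partial product $\prod_{v\in S}\vartheta_v(\PP^n(\QQ_v)\setminus\pi(V(\QQ_v)))\cdot\prod_{v\notin S,\, v\leq M}\vartheta_v(\pi(V(\QQ_v)))$.

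To finish I would let $M\to\infty$: Lemma \ref{lem:Z} gives $\vartheta_p(\PP^n(\QQ_p)\setminus\pi(V(\QQ_p)))\leq \#Z(\FF_p)/\#\PP^n(\FF_p)\ll p^{-2}$ for $p>A$, so the product over all places converges absolutely and the partial products over $\{v\leq M\}\cup\{\infty\}$ approach it at rate $O(1/M)$; combining this with the error terms $O(1/(M\log M))$ and $o_M(B^{n+1})$ above and dividing by $\#\{x\in\PP^n(\QQ):H(x)\leq B\}\sim c_n B^{n+1}$, a routine $\limsup/\liminf$ argument in $B$ followed by $M\to\infty$ yields the claimed limit and simultaneously shows the right-hand side is a convergent Euler product. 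The main obstacle is this middle step: the local sets $\pi(V(\QQ_v))$ are not literally unions of residue disks, so one must justify their Jordan measurability and marry the $p$-adic equidistribution of Proposition \ref{prop:sieve} to archimedean equidistribution. The genuinely essential ingredient, however, is the codimension-$\geq 2$ statement of Lemma \ref{lem:Z}, available precisely because $\Delta(\pi)=0$, which both enables the truncation and forces the Euler product to converge.
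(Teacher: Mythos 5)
Your treatment of the upper bound \eqref{eq:leb} matches the paper's essentially verbatim: bound the count by those $x$ with $x \bmod p \in Z(\FF_p)$ for all large $p\mid Q$, feed $\#Z(\ZZ/Q\ZZ)\ll K_0^{\omega(Q)}Q^{n-2}$ (Lang--Weil and CRT) together with Lemma~\ref{lem:techn} into Proposition~\ref{prop:sieve}. Your explicit factorization $Q=Q_1Q_2$ to discard primes $p\leq A$ is a detail the paper leaves implicit, and it is the right fix.

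For the asymptotic formula the underlying strategy is also the same — truncate at a large $M$ via Lemma~\ref{lem:Bhargava}, equidistribute at the places $v\leq M$ and $v=\infty$, then let $M\to\infty$ using the $\vartheta_p$-measure bound $\ll p^{-2}$ from Lemma~\ref{lem:Z} — but the paper packages all of this by citing the Ekedahl-sieve machinery of \cite{BBL16}: \cite[Prop.~3.4]{BBL16} provides precisely the joint equidistribution of rational points with respect to $\vartheta_\infty\times\prod_p\vartheta_p$ for sets whose boundary has measure zero, \cite[Lem.~3.5]{BBL16} supplies the sparsity hypothesis from Lemma~\ref{lem:Z}, and \cite[Lem.~3.9]{BBL16} gives the measurability and zero-boundary property of $\pi(V(\QQ_v))$. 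You reconstruct these ingredients from scratch. That is a legitimate alternative, but two points in your sketch would need more care. First, your measurability step (``boundary contained in the non-smooth locus via the implicit function theorem'') is morally what \cite[Lem.~3.9]{BBL16} proves, but it needs the observation that $\pi(V(\QQ_v))$ is closed (properness), that over the smooth locus a $\QQ_v$-point forces $\pi$ submersive at that point, and hence that boundary points lie over the non-smooth locus which has $\vartheta_v$-measure zero; this should be argued, not asserted. Second, Proposition~\ref{prop:sieve} as stated only handles $p$-adic congruence conditions; your invocation of ``standard joint equidistribution with respect to $\vartheta_\infty\times\prod_{p\leq M}\vartheta_p$'' requires a version incorporating the archimedean place, which is routine but is precisely what \cite[Prop.~3.4]{BBL16} supplies and is not contained in Proposition~\ref{prop:sieve}. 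Neither issue is a genuine gap in the idea, but in a full write-up you would either need to prove the archimedean-plus-finite equidistribution yourself or cite it, at which point you have essentially recovered the paper's appeal to \cite{BBL16}.
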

\begin{proof}
	The asymptotic formula is proved using an adaptation of \cite[Thm.~3.8]{BBL16},
	via the version of the sieve of Ekedahl given in \cite[Prop.~3.4]{BBL16}.
	That the condition (3.5) from \cite[Prop 3.4]{BBL16} is satisfied follows from
	Lemma \ref{lem:Z} and \cite[Lem.~3.5]{BBL16}.  
	If $\pi(V(\QQ_v)) \neq \emptyset$ then \cite[Lem.~3.9]{BBL16} implies that $\pi(V(\QQ_v))$
	is measurable, has positive measure and has boundary of measure $0$. Moreover $\pi(V(\QQ_v))
	\subset \PP^n(\QQ_v)$ is closed as $\pi$ is proper.
	It follows that $\PP^n(\QQ_v) \setminus \pi(V(\QQ_v))$
	is measurable, has boundary of measure $0$, and has positive measure if non-empty.
	Therefore the measurability hypotheses in \cite[Prop.~3.4]{BBL16} are all satisfied.
	Applying \cite[Prop.~3.4]{BBL16} gives the asymptotic formula.
	(Note that \cite[Lem.~4.8]{BBL16} works with the Haar measure on $\mathbb{Q}_v^{n+1}$,
	whereas in the statement we use the measure $\vartheta_v$. One easily obtains
	our statement using Lemma \ref{lem:Haar}.)
	
	Next, by the Lang--Weil estimates 
	there exists $K_{0}>0$ such that 
	$
	\#Z(\F_p)\leq~K_{0}~p^{n-2}
	$
	for all $p$.
	Therefore 
	we have $
	\# Z(\ZZ/Q\ZZ) \leq K_{0}^{\omega(Q)} Q^{n-2}
	.$
	Lemma \ref{lem:Z} and Proposition~\ref{prop:sieve}
	now show that
	the left side of~\eqref{eq:leb} 
	is
	\begin{align*}
		&\ll
		\#\left\{x\in \P^n(\Q):
		\begin{array}{l}
		H(x)\leq B, \pi^{-1}(x) \text{ smooth},  \\
		p \mid Q \Rightarrow x \bmod p \in Z(\FF_p)  
		\end{array}	\right\}  \\
		& \ll \frac{\# Z(\ZZ/Q\ZZ)}{\# \PP^n(\ZZ/Q\ZZ)} B^{n+1}  + Q\#Z(\ZZ/Q\ZZ)\left( B+ \frac{B^n}{Q^{n}} \log B\right) \\
		& \ll  K_0^{\omega(Q)}Q^{-2} B^{n+1}  + K_0^{\omega(Q)} Q^{n-1}B   + K_0^{\omega(Q)}Q^{-1} B^n \log B. \qedhere
	\end{align*}
\end{proof}
We now show the existence of the limit \eqref{def:jjlimit}. In fact, we prove the following explicit formula. (Recall the definition of $\tau_\pi(B,j)$ from~\eqref{def:fpijb}.)

\begin{proposition} \label{prop:limit}
	We have
 \[
	\tau_\pi(j) : =\lim_{B\to\infty}
	\tau_\pi(B,j)=
	\sum_{p_1<p_2<\cdots<p_j} \prod_{p\mid p_1\cdots p_j}
	\vartheta_p(\PP^n(\QQ_{p}) \setminus 	\pi(V(\QQ_{p})) 
	\prod_{p\nmid p_1\cdots p_j} \vartheta_p(\pi(V(\QQ_p)),
 \]
 where the sum and products are convergent.
\end{proposition}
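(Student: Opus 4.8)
The proposition is essentially a Borel–Cantelli / inclusion–exclusion argument built on top of the two parts of Proposition~\ref{prop:Ekedahl}. Write $B_p = \vartheta_p(\PP^n(\QQ_p)\setminus \pi(V(\QQ_p)))$ and $G_p = \vartheta_p(\pi(V(\QQ_p))) = 1-B_p$ for primes $p>A$ (for $p\le A$ one argues separately, or absorbs them into a fixed finite set $S_0$; I will suppress this technicality below). Lemma~\ref{lem:Bhargava} shows $\sum_p B_p < \infty$ — more precisely $\sum_{p>M} B_p \ll 1/(M\log M)$ — since $B_p \ll \#Z(\FF_p)/\#\PP^n(\FF_p) \ll 1/p^2$ by the Lang--Weil bound on $Z$. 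Hence the claimed series
\[
\sum_{p_1<\cdots<p_j}\ \prod_{i} B_{p_i}\ \prod_{p\notin\{p_1,\dots,p_j\}} G_p
\]
converges absolutely: it is bounded by $\big(\sum_p B_p\big)^j/j!$ times the convergent product $\prod_p G_p$. So the right-hand side is well-defined; the real content is the interchange of limit and infinite sum.

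**Main steps.** First I would fix a parameter $M$ and split the primes into the "small" set $\mathcal P_{\le M}=\{p\le M\}$ and the "large" tail $\mathcal P_{>M}$. For the finitely many primes $p \le M$, Proposition~\ref{prop:Ekedahl} (the asymptotic formula, applied with $S$ ranging over subsets of $\mathcal P_{\le M}$) gives, for each fixed $k\ge 0$,
\[
\lim_{B\to\infty}\nu_B\big(\{x:\ \#\{p\le M:\pi^{-1}(x)(\QQ_p)=\emptyset\}=k,\ \pi^{-1}(x)(\QQ_v)\ne\emptyset\ \forall v\notin \mathcal P_{\le M}\}\big)
= \sum_{\substack{S\subseteq \mathcal P_{\le M}\\ \#S=k}} \prod_{p\in S}B_p \prod_{p\le M,\ p\notin S} G_p,
\]
and more generally a clean limiting expression for the joint distribution of the indicators $\theta_p(x)$, $p\le M$. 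Second, I would bound the "tail contribution": by Lemma~\ref{lem:Bhargava},
\[
\limsup_{B\to\infty}\nu_B\big(\{x:\ \exists p>M\ \text{with}\ \pi^{-1}(x)(\QQ_p)=\emptyset\}\big)\ \ll\ \frac{1}{M\log M}.
\]
Combining these two facts: on the event that no large prime is insoluble, $\omega_\pi(x) = \#\{p\le M:\theta_p(x)=1\}$, so
\[
\Big|\tau_\pi(j,B) - \nu_B\big(\{x:\#\{p\le M:\theta_p(x)=1\}=j,\ \text{no large prime insoluble}\}\big)\Big| \ll \frac{1}{M\log M}
\]
uniformly in $B$ (large), and the second quantity converges as $B\to\infty$ to $\sum_{S\subseteq\mathcal P_{\le M},\#S=j}\prod_{p\in S}B_p\prod_{p\le M, p\notin S}G_p$. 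Third, I would let $M\to\infty$: this truncated sum converges to the claimed infinite sum (again using $\sum B_p<\infty$ and $\prod G_p$ convergent, both of which make the tail of the $M$-sum $O(1/M\log M)$ as well). A standard $\varepsilon/3$ argument — choose $M$ so both tail terms are $<\varepsilon$, then $B$ large so the finite-$M$ limit is within $\varepsilon$ — shows $\lim_{B\to\infty}\tau_\pi(j,B)$ exists and equals the stated value.

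**The main obstacle.** The delicate point is the uniformity of the tail bound in $B$, i.e. getting $\nu_B(\exists p>M:\theta_p(x)=1)\ll 1/(M\log M)$ with an implied constant independent of both $B$ and $M$; this is exactly what Lemma~\ref{lem:Bhargava} (Bhargava's effective Ekedahl sieve applied to the codimension-$\ge 2$ set $Z$ from Lemma~\ref{lem:Z}) supplies, and it is the place where $\Delta(\pi)=0$ is essential — without it $Z$ would have codimension $1$ and the sieve would not converge. A secondary technical nuisance is handling the small primes $p\le A$ (and the archimedean place), where $B_p$ need not be $O(1/p^2)$ and Lemma~\ref{lem:f}/\ref{lem:Z} need not apply; these are finitely many places, so one simply includes them in the finite set over which Proposition~\ref{prop:Ekedahl}'s asymptotic formula is applied, and the bookkeeping goes through unchanged. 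Everything else is routine: absolute convergence of the Euler-type series, and the interchange of $\lim_{B}$ and $\sum_j$ which will also follow from the uniform tail bound and is what is needed for Theorem~\ref{thm:Delta=0} to conclude $\sum_j \tau_\pi(j)=1$.
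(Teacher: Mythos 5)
Your proposal is correct and follows essentially the same strategy as the paper's proof: split at a threshold $M$, use Lemma~\ref{lem:Bhargava} to bound the contribution of primes $>M$ uniformly in $B$, apply the asymptotic from Proposition~\ref{prop:Ekedahl} to the finitely many remaining local conditions, and then let $M\to\infty$, using the $1/p^2$-type decay to get convergence of the limiting series. The only cosmetic difference is that the paper derives its termwise bound \eqref{eq:lute} by comparing the asymptotic in Proposition~\ref{prop:Ekedahl} with the upper bound \eqref{eq:leb}, whereas you get $\vartheta_p(\PP^n(\QQ_p)\setminus\pi(V(\QQ_p)))\ll p^{-2}$ directly from Lemma~\ref{lem:Z} and Lang--Weil via Lemma~\ref{lem:Haar}; both routes are fine.
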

\begin{proof}
If $Q\in [1,B^{1/3})$ is a square-free integer 
then one can immediately see from~\eqref{eq:leb}
that 
\begin{equation}
	\label{eq:leb3} \#\left\{x\in \P^n(\Q):
	\begin{array}{l}
		H(x)\leq B, \pi^{-1}(x) \text{ smooth},  \\
		 p \mid Q  \Rightarrow \pi^{-1}(x)(\Q_p)= \emptyset
		\end{array}	\right\} 
	\ll 
	\frac{K_{0}^{\omega(Q)}}{Q^2} 
	B^{n+1}
		.\end{equation}
Combining the upper bound and the asymptotic 
provided
by Proposition~\ref{prop:Ekedahl} 
one sees that for every square-free $Q\neq 0$ we have
\begin{equation}
\label{eq:lute}
\prod_{p \mid Q} \vartheta_p(\PP^n(\QQ_p) \setminus \pi(V(\QQ_p)) \prod_{p \nmid Q}\vartheta_p(\pi(V(\QQ_p))
\ll  \frac{K_{0}^{\omega(Q)}}{Q^2} 
,\end{equation}
with an implied constant independent of $Q$.
Fix any $M>1$.
By Lemma~\ref{lem:Bhargava}
we see that 
\[
\tau_\pi(B,j)
=\frac{
\#\left\{x\in \P^n(\Q):
	\begin{array}{l}
		H(x)\leq B, \pi^{-1}(x) \text{ smooth}, 		\omega_\pi(x)=j,\\
		 \pi^{-1}(x)(\Q_p)= \emptyset \Rightarrow p\leq M
		\end{array}	\right\}}
	{\#\{x\in \P^n(\Q):H(x)\leq B\}}
+O\left(\frac{1}{M \log M} + \frac{1}{B}\right)
,\]
with an implied constant that is independent of $j,M$ and $B$.
We infer that  
$\tau_\pi(B,j)$
equals
\[
\sum_{p_1<p_2<\cdots<p_j\leq M}
		\frac{
		\#\left\{x\in \P^n(\Q):
	\begin{array}{l}
		H(x)\leq B, \pi^{-1}(x) \text{ smooth},  \\
		 \pi^{-1}(x)(\Q_p)= \emptyset 
		\!\Leftrightarrow \! 
		p\mid p_1\cdots p_j
		\end{array}	\right\}}
		{\#\{x\in \P^n(\Q):H(x)\leq B\}}
+ O\left(\frac{1}{M \log M} + \frac{1}{B}\right)
.\]
Fixing the value of $M$ and taking the limit as $B\to\infty$,
we see from Proposition~\ref{prop:Ekedahl} that 
\begin{equation}
\label{eq:coltrtne}
\limsup_{B\to\infty}
\Bigg|
\tau_\pi(B,j)-
\hspace{-0,3cm}
\sum_{p_1<\cdots<p_j\leq M}
		\prod_{p\mid p_1\cdots p_j}\vartheta_p(\PP^n(\QQ_{p}) \setminus \pi(V(\QQ_{p})) 
		\prod_{p\nmid p_1\cdots p_j} \vartheta_p(\pi(V(\QQ_p))
\Bigg| 
\ll \frac{1}{M}
.\end{equation}
Note that the infinite series  
\[
\sum_{p_1<p_2<\cdots<p_j}\prod_{p\mid p_1\cdots p_j}\vartheta_p(\PP^n(\QQ_{p}) \setminus \pi(V(\QQ_{p})) 
		\prod_{p\nmid p_1\cdots p_j} \vartheta_p(\pi(V(\QQ_p))
\]
converges owing to the bound 
\begin{equation}
\label{eq:can}
\prod_{p\mid p_1\cdots p_j}\vartheta_p(\PP^n(\QQ_{p}) \setminus \pi(V(\QQ_{p})) 
		\prod_{p\nmid p_1\cdots p_j} \vartheta_p(\pi(V(\QQ_p))
		\ll \frac{K_{0}^j}{(p_1\cdots p_j)^2}
,		\end{equation}
that follows from~\eqref{eq:lute}.
Taking $M$ to be arbitrarily large 
in~\eqref{eq:coltrtne}
proves the result.
\end{proof}

\subsubsection{Probability measure}
\label{s:koopmangod}
We now show that \eqref{def:jjlimit} indeed defines a probability measure on $\ZZ$. To do so,
it suffices to show that
\begin{equation}
\label{eq:space}
\sum_{j \in \ZZ} \tau_\pi(j) = 1.
\end{equation}
 Partitioning all possible values for $\omega_\pi(x)$ we have
\[ \#\{x\in \P^n(\Q):H(x)\leq B, \pi^{-1}(x) \text{ smooth}\}
	=\sum_{j=0}^\infty
	\#\left\{x\in \P^n(\Q):
\begin{array}{l}
	H(x)\leq B, \omega_\pi(x)=j, \\
	\pi^{-1}(x) \text{ smooth}
	\end{array}	\right\}
.\]
Fix $j_0 \in \N$ and note that if $\omega_\pi(x)>j_0$ then the largest prime $p$ such that 
$\pi^{-1}(x)(\Q_p)=\emptyset$ exceeds the $j_0$-th largest prime, therefore it is at least $j_0$.
This shows that 
\begin{align*}
	\sum_{j > j_0}
	&\#\left\{x\in \P^n(\Q):
	\begin{array}{l}
	H(x)\leq B, \omega_\pi(x)=j, \\
	\pi^{-1}(x) \text{ smooth}
	\end{array}	\right\}
	\\ \leq  &
	\#\left\{x\in \P^n(\Q):
	\begin{array}{l}
	H(x)\leq B, \pi^{-1}(x) \text{ smooth},  \\
	\exists p>j_0 \text{ s.t. } \pi^{-1}(x)(\Q_p)= \emptyset
	\end{array}	\right\}
\end{align*}
which is $O(B^{n+1} j_0^{-1} + B^n)$ by Lemma~\ref{lem:Bhargava}.
Dividing  by $\#\{x\in \P^n(\Q):H(x)\leq B\}$ gives  
\[
\frac{\#\{x\in \P^n(\Q):H(x)\leq B, \pi^{-1}(x) \text{ smooth}\}}{\#\{x\in \P^n(\Q):H(x)\leq B\}}
	=\sum_{0\leq j \leq j_0} \tau_\pi(B,j)
	+O\l(\frac{1}{j_0} + \frac{1}{B}\r)
,\]
with an implied constant that is independent of $j_0$.
Letting $B\to\infty$ we obtain
\[\sum_{0\leq j \leq j_0} \tau_\pi(j) = 1 +O\l(\frac{1}{j_0}\r).\]
Letting $j_0\to\infty$ we infer that the sum over $j$ converges to $1$,
thus verifying~\eqref{eq:space}.  
\subsubsection{Upper bounds}
\label{s:upperfor13}
We now  prove~\eqref{eq:wowcan}.
Combining~\eqref{eq:can} and Proposition~\ref{prop:limit} shows that 
\[ 
\tau_\pi(j) 
=
\lim_{B\to\infty}
\tau_\pi(j,B) 
\ll_\pi 
K_{0}^j
\sum_{p_1<\ldots<p_j} 
\frac{1}{p_1^{2}
\cdots p_j^{2}} 
.\]
Let us denote
the primes in ascending order as 
$q_1=2,q_2=3,$ etc.
Writing 
\[
K_{0}^j
\sum_{p_1<\ldots<p_j} 
\frac{1}{p_1^{2}
\cdots p_j^{2}} 
=
\sum_{p_1\geq 2} \frac{K_{0}}{p_1^{2}}
\sum_{p_2>p_1} \frac{K_{0}}{p_2^{2}}
\cdots
\sum_{p_j>p_{j-1}} \frac{K_{0}}{p_j^{2}}
,\]
we observe that 
the sum over $p_2$ contains all primes $p\geq q_2$,
the sum over $p_3$ contains all primes $p\geq q_3$
and so on.
Therefore,
one has  
\[
\tau_\pi(j) 
\ll
\prod_{i=1}^j
\l(
\sum_{p\geq q_i} \frac{K_{0}}{p^2}
\r)
.\]
By the prime number theorem and partial summation we 
see that  
$
\sum_{p\geq z}p^{-2} \leq c_0/(z\log z)
$
for some absolute $c_0>1$,
thus  
$\tau_\pi(j)
\prod_{i=1}^j
(q_i \log q_i)
\ll
(c_0 K_{0})^j
$.  
Using $q_i\sim i \log i$
and 
the prime number theorem
with partial summation we obtain
\begin{align*}
\log \l(
\prod_{i=1}^j q_i \log q_i
\r)
=
&
\sum_{p\leq q_j} \log p
+
\sum_{p\leq q_j} \log \log p 
\\
=&q_j +O\l(\frac{q_j  }{\log q_j}\r)
+
\frac{q_j  \log \log q_j}{\log q_j} +O\l(\frac{q_j  \log \log q_j}{(\log q_j)^2}\r)
\\
=&
j \log j+ j \log \log j+ o(j \log \log j ) 
\\
\geq 
&
j \log j +\frac{3}{4}
j \log \log j,
\end{align*}
for all sufficiently large $j$.
We deduce that  for all large $j$ one
has 
\[\tau_\pi(j)
\leq
(c_0 K_{0})^j
\prod_{i=1}^j
\frac{1}{q_i \log q_i}
\leq 
 \frac{(c_0 K_0 )^j}
{ j^j
(\log j)^{\frac{3j}{4}}}
\leq 
 \frac{1}{j^j(\log j)^{j/2}}
,\]
from which~\eqref{eq:wowcan}
follows. This completes the proof of Theorem \ref{thm:Delta=0}. \qed

\subsection{The family of diagonal cubic surfaces} \label{sec:cubics}
We now return to Example \ref{ex:cubics}, and prove the claim that 
there exists an absolute constant $c>0$ such that $\tau_\pi(j) >  c (1+j)^{-3j}$.

Let $y = (y_0:y_1:y_2:y_3) \in \PP^3(\ZZ_p)$ where $(y_0,\dots,y_3)$ is primitive. By the criterion in~\cite[p.28]{MR927558},
if a prime $p\equiv 1\bmod{3}$ satisfies $p\nmid y_0y_1$,
$p\|y_2$,
$p \|y_3$
and neither $-y_1/y_0$ nor $-y_3/y_2$ are cubes,
then  $\pi^{-1}(y)$
has no $p$-adic point.
It is easy to see that there exists an absolute constant $K_1>0$ such that 
the measure of this with respect to $\vartheta_p$ is at least $K_1p^{-2}$. Hence, denoting by
$q_i$ the $i$-th largest prime being $1\bmod{3}$,
Proposition~\ref{prop:limit} gives
$$\tau_\pi(j) \geq  K_2\prod_{i=1}^j \frac{K_1}{q_i^{2}},$$
for some constant $K_2>0$ (the product in  Proposition~\ref{prop:limit}  being convergent). By the prime number theorem for arithmetic progressions we have $q_i \sim 2 i \log i $, hence
\[
\log \tau_\pi(j)^{-1}
\leq -\log K_2
-j (\log K_1)+2 \hspace{-30pt}
\sum_{\substack{p\leq (2 j \log j)(1+o(1))\\p\equiv 1 \bmod 3}}
\hspace{-30pt}
\log p
=(2 j \log j)(1+o(1))
\leq 3j \log j
\]
for all sufficiently large $j$.
This proves the claim. \qed

\subsection{Proof of Theorem \ref{thm:Ax-Kochen}} \label{sec:Ax-Kochen}
The implication $\Leftarrow$ is clear. For the other, assume that 
$V(\QQ_p) \to \PP^n(\QQ_p)$ is not surjective for infinitely many primes $p$. Let $S$
be a finite set of such primes and let $x_p \in \PP^n(\QQ_p) \setminus \pi(V(\QQ_p))$ for $p \in S$.
By Proposition \ref{prop:sieve}, a positive proportion of $x \in \PP^n(\QQ_p)$ 
are arbitrarily close to the $x_p$ for all $p \in S$. Moreover, as $\pi$ is proper the set $\pi(V(\QQ_p))$
is closed with respect to the $p$-adic topology. It follows that provided the $x$ are sufficiently close to the $x_p$ we have
$\pi^{-1}(x)(\QQ_p) = \emptyset$ for all $p \in S$. Hence for such $x$ we have $\omega_\pi(x) \geq \#S$. As $S$ can be chosen
sufficiently large, the result follows. \qed

\subsection{Proof of Theorem \ref{thm:Hasse}}
As shown in the proof of Proposition \ref{prop:Ekedahl}, the sets $\pi(V(\QQ_v))$ and $\PP^n(\QQ_v) \setminus \pi(V(\QQ_v))$ are measurable with respect to $\vartheta_v$, and have positive measure if non-empty. The result now 
follows as the Euler product in Proposition~\ref{prop:Ekedahl} is convergent. \qed

\subsection{Proof of Theorem~\ref{thm:Deltazeromoments}}
\label{s:giftmoment}
Let $\c{N}_{r}(\pi,B)$ be as in \eqref{def:N_moment}. We begin with the following.

\begin{lemma} 
\label{lem:supexp2}
For every $r\in \Z_{\geq 0}$ we have 
$
\c{N}_{r}(\pi,B)
\ll_{r}
B^{n+1}
.$
\end{lemma}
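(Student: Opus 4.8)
\subsection*{Proof strategy for Lemma~\ref{lem:supexp2}}

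The plan is to split $\omega_\pi$ at a threshold depending on $B$: for a parameter $M=M(B)>1$ set
\[
\omega_\pi^{\leq M}(x):=\#\{p\leq M:\pi^{-1}(x)(\Q_p)=\emptyset\},\qquad \omega_\pi^{>M}(x):=\omega_\pi(x)-\omega_\pi^{\leq M}(x),
\]
so that, by convexity, $\omega_\pi(x)^r\leq 2^r\big(\omega_\pi^{\leq M}(x)^r+\omega_\pi^{>M}(x)^r\big)$. The contribution of $\omega_\pi^{>M}$ will be controlled crudely via Lemma~\ref{lem:Bhargava} together with the pointwise bound of Lemma~\ref{lem:bounds}, while the contribution of $\omega_\pi^{\leq M}$ will be expanded into a sum of congruence counts handled by~\eqref{eq:leb3}. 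The case $r=0$ is the trivial estimate $\c{N}_0(\pi,B)\leq\#\{x\in\PP^n(\QQ):H(x)\leq B\}\ll B^{n+1}$, so from now on one fixes $r\geq 1$ and takes $M:=(\log B)^{r+1}$.

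For the primes above $M$: if $\omega_\pi^{>M}(x)\geq 1$ then there is a prime $p>M$ with $\pi^{-1}(x)(\Q_p)=\emptyset$, so $H(x)\to\infty$ and Lemma~\ref{lem:bounds} gives $\omega_\pi^{>M}(x)\leq\omega_\pi(x)\ll\log H(x)/\log\log H(x)\ll\log B$, whence $\omega_\pi^{>M}(x)^r\ll_r(\log B)^r$; and by Lemma~\ref{lem:Bhargava} the number of $x\in\PP^n(\QQ)$ with $H(x)\leq B$, $\pi^{-1}(x)$ smooth and $\omega_\pi^{>M}(x)\geq 1$ is $\ll B^{n+1}/(M\log M)+B^n$. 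Since $M\log M\gg_r(\log B)^{r+1}\log\log B$, this yields
\[
\sum_{\substack{x\in\PP^n(\QQ),\,H(x)\leq B\\ \pi^{-1}(x)\text{ smooth}}}\omega_\pi^{>M}(x)^r\ll_r(\log B)^r\Big(\frac{B^{n+1}}{M\log M}+B^n\Big)\ll_r B^{n+1}.
\]
For the primes below $M$: expanding $\omega_\pi^{\leq M}(x)^r=\big(\sum_{p\leq M}\theta_p(x)\big)^r$ and grouping the $r$-tuples $(p_1,\dots,p_r)$ according to their radical $Q$ (there being at most $\omega(Q)^r\leq r^r$ tuples with a given radical, and none when $\omega(Q)>r$), together with $\theta_p(x)^k=\theta_p(x)$, gives
\[
\omega_\pi^{\leq M}(x)^r\leq r^r\sum_{\substack{Q\text{ square-free},\ 1\leq\omega(Q)\leq r\\ p\mid Q\,\Rightarrow\,p\leq M}}\ \prod_{p\mid Q}\theta_p(x).
\]
Every such $Q$ satisfies $Q\leq M^r=(\log B)^{r(r+1)}<B^{1/3}$ for $B$ large, so summing over $x$ with $H(x)\leq B$ and $\pi^{-1}(x)$ smooth and invoking~\eqref{eq:leb3} bounds the inner count by $\ll K_{0}^{\omega(Q)}B^{n+1}/Q^2\leq K_{0}^r B^{n+1}/Q^2$; since $\sum_{Q\geq 1}Q^{-2}=\zeta(2)<\infty$ one obtains $\sum_x\omega_\pi^{\leq M}(x)^r\ll_r B^{n+1}$. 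Adding the two contributions proves the lemma.

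The argument is essentially bookkeeping, and the one genuine point is the choice of threshold $M$: the Ekedahl-type estimate~\eqref{eq:leb} degrades for large moduli (through the terms $BQ^{n-1}$ and $B^n(\log B)/Q$), so it cannot be applied directly to the primes up to size $H(x)^D$ occurring in $\omega_\pi$; those must first be stripped off using Lemma~\ref{lem:Bhargava}, which costs only a factor $(\log B)^r$ and hence dictates that $M$ be a fixed power of $\log B$.
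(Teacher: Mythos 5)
Your argument is correct, and it proves the lemma, but it follows a genuinely different route from the paper for the large primes. You cut at $M=(\log B)^{r+1}$ and dispatch the contribution of primes $p>M$ by combining the pointwise bound $\omega_\pi(x)\ll\log B$ from Lemma~\ref{lem:bounds} with the Ekedahl-sieve count from Lemma~\ref{lem:Bhargava}, at the cost of a harmless $(\log B)^r$ factor. The paper instead cuts at $B^{\epsilon(r)}$ with $\epsilon(r)=1/(3r)$ and handles primes $p>B^{\epsilon(r)}$ entirely by the elementary observation \eqref{eq:obviousboundisobvious} applied to $f(x)$: since any $p$ with $\pi^{-1}(x)(\Q_p)=\emptyset$ divides $f(x)$ (for $p>A$) and $|f(x)|\ll B^{\deg f}$, at most $O_\epsilon(1)$ primes above $B^\epsilon$ can contribute, so no appeal to Lemma~\ref{lem:Bhargava} is needed at this stage. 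Both treatments of the small-prime range are the same multinomial/radical expansion feeding into \eqref{eq:leb3}, and in both cases one must verify the square-free modulus $Q$ stays below $B^{1/3}$: trivially true for you since $Q\leq M^r=(\log B)^{r(r+1)}$, and true in the paper because $Q\leq B^{r\epsilon(r)}=B^{1/3}$, which is exactly what dictates the choice $\epsilon(r)=1/(3r)$. Your version leans on a heavier input (the effective Ekedahl sieve) where a purely pointwise estimate on $\omega(f(x))$ suffices, but the trade-off is innocuous here and the bookkeeping is, if anything, a bit cleaner.
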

\begin{proof}
Recall $f$ and $A$ from Lemma~\ref{lem:f}.
By Lemma \ref{lem:bounds} we have
$$\c{N}_{r}(\pi,B)
= \sum_{\substack{ x \in \PP^n(\QQ), H(x)\leq B\\ f(x) \neq 0}} \omega_\pi(x)^{r}
+ O\left(B^n(\log B)^r\right),$$
where we have used the evident bound $\#\{ x \in \PP^n(\QQ): H(x) \leq B, f(x) = 0\} \ll B^{n}$.
For any $\epsilon>0$ and any $x$ with $f(x) \neq 0$ we have via~\eqref{eq:obviousboundisobvious}
that
\[
\#\{p>B^\epsilon:
\pi^{-1}(x)(\Q_p)=\emptyset
\}
\leq 
\#\{p>B^\epsilon:p \text{ divides } f(x)\}\leq 
\frac
{\log |f(x)| }
{\log (B^\epsilon) }
.\]
As $H(x)\leq B$ implies  $|f(x)|\ll B^{\deg(f)}$, we thus find that
\[
\omega_\pi(x)=
O_\epsilon(1)
+\sum_{A<p\leq B^\epsilon}\theta_p(x)
.\]
Let us now define 
$
\epsilon(r)
:=(3r)^{-1}
$.
Then
\[
\c{N}_{r}(\pi,B) =
\hspace{-10pt}
\sum_{\substack{ x \in \PP^n(\QQ), H(x)\leq B\\ f(x) \neq 0}} 
\l(
O_{r}(1)
+
\hspace{-10pt}
\sum_{A<p\leq B^{\epsilon(r)}
}\hspace{-10pt}\theta_p(x)
\r)^r
\ll_{r}
\sum_{m=0}^r
\sum_{\substack{ x \in \PP^n(\QQ), H(x)\leq B\\ f(x) \neq 0}} 
\l(\sum_{A<p\leq B^{\epsilon(r)}}\hspace{-10pt} \theta_p(x)\r)^m
.\]
To prove the lemma it therefore suffices to show that
\[
\sum_{\substack{ x \in \PP^n(\QQ), H(x)\leq B\\ f(x) \neq 0}} 
\l(\sum_{A<p\leq B^{\epsilon(r)}}\theta_p(x)\r)^m
\ll_{m}
B^{n+1}
.\]
Using the multinomial theorem the sum over $x$ above equals
\[  
\sum_{
\substack{
(m_p)_{p\in (A,B^{\epsilon(r)}]}
\\ 
\sum_{p\in (A,B^{\epsilon(r)}]}m_p=m
}}
\frac{m!}{\prod_{p\in (A,B^{\epsilon(r)}]}m_p!
} 
\sum_{\substack{ x \in \PP^n(\QQ), H(x)\leq B\\ f(x) \neq 0}} 
\prod_{\substack{p\in (A,B^{\epsilon(r)}]\\m_p\neq 0}}\theta_p(x)
.\]
Letting $k$ be the number of $p\in (A,B^{\epsilon(r)}]$
with
$m_p\neq 0$,
shows that the last quantity is 
\[
\ll_m
\sum_{k=1}^m
\sum_{A<p_1<p_2<\cdots<p_k\leq B^{\epsilon(r)}}
\sum_{\substack{ x \in \PP^n(\QQ), H(x)\leq B\\ f(x) \neq 0}} 
\prod_{i=1}^k
\theta_{p_i}(x)
.\] 
By~\eqref{eq:leb3}
and the fact that $k\leq m\leq r$
and 
$\epsilon(r)\leq (3m)^{-1}$
we see that the inner 
sum over $x$ is 
$
\ll
B^{n+1}
\prod_{i=1}^k 
(K_0 p_i^{-2})
$.
We obtain that 
\[
\sum_{A<p_1<p_2<\cdots<p_k\leq B^{\epsilon(r)}}
\sum_{\substack{ x \in \PP^n(\QQ), H(x)\leq B\\ f(x) \neq 0}} 
\prod_{i=1}^k
\theta_{p_i}(x)
\ll
B^{n+1}
\hspace{-10pt}
\sum_{A<p_1<p_2<\cdots<p_k\leq B^{\epsilon(r)}}
\prod_{i=1}^k 
(K_0 p_i^{-2})
\ll B^{n+1}
,\]  
thus concluding our proof.
\end{proof}

Now observe that
\begin{equation}
\label{eq:partfg}
\frac{\c{N}_r(\pi, B)}{\#\{x\in \P^n(\Q):H(x)\leq B\}}
=\sum_{j=0}^\infty
j^r
\tau_\pi(B,j).
\end{equation}
However, for any $M>1$ we may use the inequality 
$
 \mathbf{1}_{\{\omega_\pi(x)>M\}}(x)
\leq 
\omega_\pi(x)/M$
to find that
\begin{equation}
\label{eq:can1}
\sum_{j>M}
j^r
\tau_\pi(B,j)
=
\frac{\sum_{\substack{ x \in \PP^n(\QQ), H(x)\leq B\\ \pi^{-1}(x) \text{ smooth}
}} \omega_\pi(x)^{r} 
 \mathbf{1}_{\{\omega_\pi(x)>M\}}(x)}
 {\#\{x\in \P^n(\Q):H(x)\leq B\}}
\ll
\frac{\c{N}_{r+1}(\pi,B)}{M B^{n+1} }.
\end{equation} 
We now infer from~\eqref{eq:partfg}, \eqref{eq:can1}
and Lemma~\ref{lem:supexp2}
that 
\begin{equation}
\label{eq:hmm}
\left|
\frac{\c{N}_r(\pi, B)}{\#\{x\in \P^n(\Q):H(x)\leq B\}}
-\sum_{0\leq j \leq M}
j^r
\tau_\pi(B,j)
\right|
\ll_{r}
\frac{1}{M}
,\end{equation}
where the implied constant is independent of $B$ and $M$.
Fixing $M$ and taking $B\to \infty$ we are led to the conclusion that 
\[
\limsup_{B\to\infty}
\left|
\frac{\c{N}_r(\pi, B)}{\#\{x\in \P^n(\Q):H(x)\leq B\}}
-\sum_{0\leq j \leq M}
j^r
\tau_\pi(j)
\right|
\ll \frac{1}{M}
.\]
By~\eqref{eq:wowcan} the sum over $j$ is convergent as $M\to\infty$,
which completes the proof. \qed

\subsection{Generalisations} \label{sec:generalisations}
One can consider variants of the function $\omega_\pi$ from \eqref{def:omega}, by considering
real solubility or by dropping conditions at finitely many primes. Namely, let $S$ be a finite set of places
of $\Q$. Then we define
$$\omega_{\pi,S}(x) :=\#\big\{\text{places } v \notin S:\pi^{-1}(x)(\Q_v)=\emptyset\big\}.$$
We have considered the case $\omega_\pi = \omega_{\pi,\infty}(x)$ for simplicity of exposition,
but a minor variant of our arguments yields the following generalisation of Theorem \ref{thm:Delta=0}
(the important point being that the asymptotic in Proposition \ref{prop:Ekedahl} applies to arbitrary $S$).

\begin{theorem} \label{thm:Delta=0_S}
Let $V$ be a smooth projective variety over $\QQ$ equipped with a dominant morphism $\pi: V \to \PP^n$
with geometrically integral generic fibre and $\Delta(\pi) = 0$. Let $S$ be a finite set of places of $\QQ$
and $j \in \Z$.
Then the limit
\[
	\lim_{B\to \infty} 
	\frac
	{\#\{
	x\in \P^n(\Q):
	H(x)\leq B,
	\pi^{-1}(x) \text{ smooth},
	\omega_{\pi,S}(x)=j
	\}}
	{\#\{
	x\in \P^n(\Q):
	H(x)\leq B
	\}}
\]
exists and defines a probability measure on $\ZZ$.
\end{theorem}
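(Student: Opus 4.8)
The plan is to retrace the proof of Theorem~\ref{thm:Delta=0} from \S\ref{s:existe}, checking that every ingredient survives the removal of the solubility conditions at the places of $S$; the entire content is that Proposition~\ref{prop:Ekedahl} and its proof already accommodate an arbitrary finite set of places. Write
\[
\tau_{\pi,S}(j,B):=\frac{\#\{x\in\PP^n(\Q):H(x)\le B,\ \pi^{-1}(x)\text{ smooth},\ \omega_{\pi,S}(x)=j\}}{\#\{x\in\PP^n(\Q):H(x)\le B\}},
\]
and fix $M_0\in\N$ larger than every finite prime in $S$. The first step is to record the following variant of Proposition~\ref{prop:Ekedahl}: for finite pairwise disjoint sets of places $T$ and $S$, the density of those $x\in\PP^n(\Q)$ with $\pi^{-1}(x)$ smooth, $\pi^{-1}(x)(\Q_v)=\emptyset$ for $v\in T$ and $\pi^{-1}(x)(\Q_v)\neq\emptyset$ for $v\notin T\cup S$ equals the convergent Euler product $\prod_{v\in T}\vartheta_v(\PP^n(\Q_v)\setminus\pi(V(\Q_v)))\prod_{v\notin T\cup S}\vartheta_v(\pi(V(\Q_v)))$, and for square-free $Q$ coprime to the primes of $S$ the bound \eqref{eq:leb} holds for the number of such $x$ with $\pi^{-1}(x)(\Q_p)=\emptyset$ for all $p\mid Q$. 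Its proof is word for word that of Proposition~\ref{prop:Ekedahl}: one imposes at each $v\in T$ the local condition $x\in\PP^n(\Q_v)\setminus\pi(V(\Q_v))$, at each $v\in S$ the trivial condition $x\in\PP^n(\Q_v)$, and at every other place the condition $x\in\pi(V(\Q_v))$; each of these sets is measurable with boundary of $\vartheta_v$-measure $0$ (trivially so for $\PP^n(\Q_v)$ itself), so the sieve of Ekedahl of \cite[Prop.~3.4]{BBL16} applies unchanged, its only geometric input being Lemma~\ref{lem:Z}, which does not involve $S$. In particular one gets the $S$-analogue of \eqref{eq:lute}, namely $\prod_{p\mid Q}\vartheta_p(\PP^n(\Q_p)\setminus\pi(V(\Q_p)))\prod_{p\nmid Q,\ p\notin S}\vartheta_p(\pi(V(\Q_p)))\ll K_0^{\omega(Q)}/Q^2$ for $Q$ coprime to $S$.

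Next I would establish existence of the limit, mimicking Proposition~\ref{prop:limit}. Fix $M\ge M_0$. Lemma~\ref{lem:Bhargava}, which makes no reference to $S$, shows that up to an error of $O(1/(M\log M)+1/B)$ one may restrict the count in $\tau_{\pi,S}(j,B)$ to $x$ all of whose bad primes are $\le M$; since $M\ge M_0$, a bad place outside $S$ is then a prime $p\le M$ with $p\notin S$, or the archimedean place if $\infty\notin S$, so there are finitely many possibilities. Partitioning over the $j$-element set $T$ of bad places and applying the above variant of Proposition~\ref{prop:Ekedahl} to each such $T$, then letting $B\to\infty$, gives
\[
\limsup_{B\to\infty}\Big|\tau_{\pi,S}(j,B)-\sum_{\substack{p_1<\cdots<p_j\le M\\ p_i\notin S}}\ \prod_{p\mid p_1\cdots p_j}\vartheta_p\big(\PP^n(\Q_p)\setminus\pi(V(\Q_p))\big)\prod_{\substack{p\nmid p_1\cdots p_j\\ p\notin S}}\vartheta_p\big(\pi(V(\Q_p))\big)\Big|\ll\frac1M,
\]
plus the analogous terms in which one of the $j$ bad places is the archimedean place (present only when $\infty\notin S$). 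By the $S$-analogue of \eqref{eq:lute} the corresponding series over all admissible $T$, with the constraint $p_i\le M$ dropped, converges; letting $M\to\infty$ therefore shows that $\tau_{\pi,S}(j):=\lim_{B\to\infty}\tau_{\pi,S}(j,B)$ exists, with an explicit formula and a bound of the shape $\tau_{\pi,S}(j)\ll_{\pi,S}K_0^{\,j}\sum_{p_1<\cdots<p_j,\ p_i\notin S}(p_1\cdots p_j)^{-2}$ as in \S\ref{s:upperfor13}.

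Finally, $\sum_{j\in\Z}\tau_{\pi,S}(j)=1$ follows exactly as in \S\ref{s:koopmangod}: for $j_0\ge M_0$, any $x$ with $\omega_{\pi,S}(x)>j_0$ has a prime $p>j_0$ with $\pi^{-1}(x)(\Q_p)=\emptyset$, and Lemma~\ref{lem:Bhargava} bounds the number of such $x$ of height $\le B$ by $O(B^{n+1}/j_0+B^n)$; dividing by $\#\{x\in\PP^n(\Q):H(x)\le B\}$, letting $B\to\infty$ and then $j_0\to\infty$, and using that the $x$ with $\pi^{-1}(x)$ smooth have density $1$, gives $\sum_j\tau_{\pi,S}(j)=1$. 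Combined with the existence and the bound just proved, this shows $j\mapsto\tau_{\pi,S}(j)$ is a probability measure on $\Z$, completing the proof of Theorem~\ref{thm:Delta=0_S}. The only point where one must be slightly careful is the bookkeeping of $S$ in the Euler products and the observation that the trivial condition $x\in\PP^n(\Q_v)$ at $v\in S$ is admissible in the Ekedahl sieve; there is no serious obstacle.
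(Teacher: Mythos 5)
Your proof is correct and carries out exactly the ``minor variant'' the paper alludes to. You correctly identify that the sieve underlying Proposition~\ref{prop:Ekedahl} (via \cite[Prop.~3.4]{BBL16}) accommodates the trivial local condition $x\in\PP^n(\Q_v)$ at each place of $S$, and the rest is a faithful re-run of Proposition~\ref{prop:limit} and the mass-one argument of \S\ref{s:koopmangod}, with the archimedean place correctly accounted for when $\infty\notin S$.
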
 

The analogues of the other results from \S \ref{sec:ps_intro} also hold for the
modified $\omega_{\pi,S}$. Also of course the analogue  of Theorem \ref{thm:gaussian} and the other results in \S \ref{sec:CLT}
trivially hold with $\omega_{\pi}$ replaced by $\omega_{\pi,S}$, since
$\omega_{\pi,S} = \omega_{\pi} + O(1)$.

\bibliographystyle{amsalpha}
\bibliography{normal_final}
\end{document}